 \documentclass[11pt,twoside]{article}
 \usepackage[dvips]{graphics}
 \usepackage{amsmath,graphicx,amsfonts,amssymb}
 \usepackage[mathscr]{euscript}
\usepackage{mwe,tikz}
\usepackage[percent]{overpic}
\usepackage{graphicx}
\usepackage{subfig}

 \newcommand{\nc}[2]{ \newcommand{#1}{#2} }

\nc{\avint}{ {- \hspace{-3.5mm} \int} }  

\nc{\R}{\rm I \! R}  
\nc{\N}{\rm I \! N} 
\newcommand{\closure}[1]{ \stackrel{\rule{.1 in}{.01 in}}{#1} }

\newcommand{\chisub}[1]{ {\mathbf{\chi}}_{_{#1}} }


\newcommand{\refeqn}[1]{ (\!\!~\ref{eq:#1}) } 
\newcommand{\refthm}[1]{ \!\!~\ref{#1} }    

\nc{\Holder}{H\"{o}lder\ }
\nc{\newnu}{\Upsilon}

\nc{\ith}{ \ensuremath{\text{i}^{\text{th}}} }
\nc{\jth}{ \ensuremath{\text{j}^{\text{th}}} }
\nc{\kth}{ \ensuremath{\text{k}^{\text{th}}} }
\nc{\dst}{ \ensuremath{\text{1}^{\text{st}}_{\delta}} }
\nc{\dnd}{ \ensuremath{\text{2}^{\text{nd}}_{\delta}} }
\nc{\ost}{ \ensuremath{\text{1}^{\text{st}}} }
\nc{\tnd}{ \ensuremath{\text{2}^{\text{nd}}} }
\nc{\curl}{ \nabla \times }
\nc{\Div}{ \nabla \cdot }
\nc{\DC}{K}

\nc{\Ppl}{ \mathcal{M}^{+} }  \nc{\Pmn}{ \mathcal{M}^{-} }

\nc{\smiley}{ $\stackrel{\because}{\smile} \;$ }

\numberwithin{equation}{section}

 \textwidth 6.25in
 \oddsidemargin 0.25in
 \evensidemargin 0.0in \topmargin 10 pt \headheight 0in \textheight 8.25in
 
 \newtheorem{theorem}{Theorem}[section]
 \newtheorem{lemma}[theorem]{Lemma}
 
 \newtheorem{proposition}[theorem]{Proposition}
 \newtheorem{corollary}[theorem]{Corollary}
 \newtheorem{definition}[theorem]{Definition}
 
 \newtheorem{claim}[theorem]{Claim}
 \newtheorem{remark}[theorem]{Remark}
 \def\qed{\hfill\rule{1ex}{1ex}\\}
 \newenvironment{proof}{\noindent {\bf
 Proof.}}{\qed}
 
\title{ \textbf{The geometry of the triple junction between three fluids in equilibrium}}
\author{Blank\footnote{Department of Mathematics, Kansas State University, blanki@math.ksu.edu}, Elcrat\footnote{Department of Mathematics, Wichita State University, deceased}, and Treinen\footnote{Department of Mathematics, Texas State University, rt30@txstate.edu}
\normalsize}

\begin{document}
\maketitle

\begin{abstract}
          We present an approach to the problem of the blow up at the triple junction of three
fluids in equilibrium.    Although many of our results can already be found in the literature, our
approach is almost self-contained and uses the theory of sets of finite perimeter without making use
of more advanced topics within geometric measure theory.  Specifically, using only the calculus of 
variations we prove two monotonicity formulas at the triple junction for the three-fluid configuration,
and show that blow up limits exist and are always cones.  We discuss some of the geometric
consequences of our results.  \\ \ \\
	Keywords: Floating Drops, Capillarity, Regularity, and Blow up. \\
	AMS subject classification: 76B45, 35R35, and 35B65 
\end{abstract}


\section{Introduction}    \label{intro}
%
Let $\Omega\subset \R^n$ be a bounded domain with boundary smooth enough
that the interior sphere condition holds. Then consider a partition of $\Omega$ into three
sets $E_j$, $j = 0,1,2.$ Each $E_j$ will represent a fluid, and we assume that the three fluids
are immiscible and are in equilibrium with respect to the energy functional
\begin{equation}
\mathcal{F}_{SWP}(\{E_j\}) 
:= \sum_{j = 0}^2 \left( \alpha_j \int_{\Omega} |D\chisub{E_j}|
+ \beta_j \int_{\partial \Omega} \chisub{E_j} \; d\mathcal{H}^{n-1} 
+ \rho_j g \int_{E_j} z \; dV \right)
\label{eq:BigFDefMain}
\end{equation}
where $g$ is determined by the force of gravity, and
where the constants $\alpha_j, \beta_j,$ and $\rho_j$ are determined by constitutive properties of our
fluids.  It will make the most sense to consider sets with finite perimeter, as this functional is infinite otherwise,
and accordingly, we will work within the framework afforded to us by functions of bounded variation.
We will define this functional more carefully and state some assumptions that we will make on the
constitutive constants in Section~\ref{DNB} below.  Two common physical situations where this mathematical model arise
include first, if there is a double sessile drop of two distinct immiscible fluids resting on a surface with air above,
and second,  if a drop of a light fluid is floating on the top of a heavier fluid and below a lighter fluid as would
be the case when oil floats on water and below air.
See Figure~\ref{fig:double_sessile} for an example
of the first situation, and Figure~\ref{fig:ThreeFluids} (found within Section~\ref{DNB}) for an
example of the second situation. The terms in the energy functional given above
arise from (in the order in which they appear) surface tension forces, wetting energy, and the gravitational
potential.

\begin{figure}[!h]
	\centering
	\scalebox{1.2}{\includegraphics{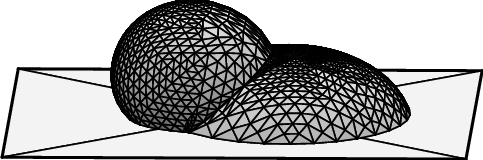}}
	\caption{A double sessile drop.}
	\label{fig:double_sessile}
\end{figure}

In this work we will study the local micro-structure of the triple junction between the fluids.  We prove two
monotonicity formulas, one with a volume constraint, and one without the volume constraint, but which is
sharp in some sense.  Both of these formulas can be compared to the classical Allard monotonicity formula \cite{Al},
and although the formulas we give are obviously not as broad in applicability, they are proven using only tools that
are basic within the calculus of variations and the theory
of sets of finite perimeter.  We use the monotonicity formula to show that blow up limits of the energy minimizing
configurations must be cones, and thus that they are determined completely by their values on the ``blow up sphere.'' 
We then study the implications of minimizing on the blow up sphere for the minimizers in the tangent plane
to the blow up sphere given that the point of tangency is at a triple point.  The consequences are geometric
restrictions on the energy minimizing configurations in the blow up sphere.  Our results can be summarized
in the following theorem:
\begin{theorem}   \label{mainresult}
Assuming that the triple $\{E_j\}$ minimizes the functional\refeqn{BigFDefMain}and assuming that
$x_0 \in \partial E_0 \cap \partial E_1 \cap \partial E_2,$ there exists a blowup limit where the $\partial E_j$
will converge to half-planes containing $x_0,$ and the angles between the half-planes along any blowup limit
satisfy the Neumann Angle Condition:
\begin{equation}\label{eq:enspre}
\frac{\sin\gamma_{01}}{\sigma_{01}} = \frac{\sin\gamma_{02}}{\sigma_{02}} =
\frac{\sin\gamma_{12}}{\sigma_{12}}.
\end{equation}
Here $\gamma_{ij}$ is the angle at the triple point measured within
$E_k$ (where $\{i,j,k\} = \{0,1,2\}$), and $\sigma_{ij}$ is the surface tension at the interface of $E_i$ and
$E_j.$
\end{theorem}
This theorem can also be mostly constructed from the work of Morgan and his students and co-authors
who use advanced topics within the field of geometric measure theory, and we will give a more thorough
comparison in a paragraph below after we first turn to some of the historical background of this problem.

The study of the floating drop problem goes back at least to 1806 when Laplace \cite{La}  formulated the
problem with the assumption of symmetry, and of course, the regularity of the interfaces between the fluids
and also the regularity of the triple junction curve.  In 2004 Elcrat, Neel, and Siegel \cite{ENS}
showed the existence (and, under some assumptions, uniqueness) of solutions for Laplace's formulation, and 
they still assumed the same conditions of symmetry and regularity.  In the time between these results
there were obviously great advancements in the regularity theory involving both the space of functions of
bounded variation and geometric measure theory.  It is with these tools that we will work, and so a quick
survey may be of use for the reader, and so we will provide a very short one in Section~\ref{BBV} below.

The study of soap film clusters began in earnest in the 1970's, and this problem has many connections
with the current work, so a comparison is in order.  In the soap film problem a region of space is partitioned
by sets, and the soap film is modeled by the boundaries of the sets, and the surface areas of these surfaces
are minimized under some volume constraint. The energy is similar to ours, although
it is simpler in some ways.  In particular, there are no weights to the surface tensions (one can set those
to unity), there is no gravitational potential, and there is no wetting term.  The wetting term is the easiest
by far to address, and even the gravitational potential can be dealt with by observing how the surface
tension term becomes much more important in blow up limits, but the fact that in our energy the surface
tension terms vary with each fluid creates considerable new difficulties.
Jean Taylor \cite{Ta} classified the structure of the singularities of soap film clusters, and among other results
was able to show that at triple junction points the surfaces meet at $120^{\circ}$.  Frank Morgan and
collaborators worked on various other aspects of soap bubble clusters, including showing that the standard
double bubble is the unique energy minimizer in a collaboration with Hutchings, Ritor\'e, and Ros \cite{HMRR}.
(See also his book {\em Geometric Measure Theory} \cite{Mo2016} and many references therein.)

It is with this approach that Morgan, White, and others study the problem of three immiscible fluids. Lawlor and
Morgan worked on paired calibrations with immiscible fluids \cite{LM1994}, White used Fleming's flat chains in
order to show the existence of least-energy configurations \cite{W1996}, and then Morgan was able to show
regularity in $\R^2$ and for some cases in $\R^3$ \cite{Mo1998} and he used Allard's monotonicity formula
for varifolds in order to obtain blowup limits.   More recently Morgan returned to the problem in $\R^2$ and
showed under some conditions that a planar minimizer with finite boundary and with prescribed areas consists
of finitely many constant-curvature arcs \cite{Mo2005}.  Although the work just described would yield
most of the conclusions of our main theorem, it is difficult to follow or inaccessible to all but experts
within the field of geometric measure theory.

Our approach is mostly limited to the formulation using functions of bounded variation.  The framework we use is
based on the work of Giusti \cite{G}, where he studies the regularity of minimal surfaces, but it is in a paper by
Massari \cite{Mas} that our problem is first formulated.  Massari showed the existence of energy minimizers, and
commented that Giusti's theory would apply in any region away from a junction of multiple fluids.  Massari and
Tamanini studied a related problem involving optimal segmentations using an approach similar to ours and obtained
a different but analogous monotonicity formula \cite{MT}.  Leonardi \cite{L2001} proved a very useful elimination
theorem about solutions to this problem which roughly states that if the volume of
some fluids is small enough in a ball, then those fluids must not appear in a ball of half the radius.  Two other
references that may be helpful are by Massari and Miranda \cite{MM} and Leonardi \cite{L2000}.  Lastly, Maggi
\cite{Mag} recently published a book that treats some aspects of this problem, including a different proof of
Leonardi's Elimination Theorem.

Finally, we give an outline of our paper.  In Section~\ref{BBV} we collect results on the space of functions
of bounded variation, distilling facts we need from much longer works on the subject.  In Section~\ref{DNB}
we carefully define our problem and some closely related problems, and we discuss some results by Almgren,
Leonardi, and Massari that will be crucial to our work.
In Section~\ref{RR} we show that in the blow up limit it suffices to consider the energy functional that ignores
any wetting energy and any gravitational potential.  In Section~\ref{MSE1} we prove a monotonicity formula
centered about a triple point for the case with volume constraints.  In Section~\ref{MSE2} we drop the volume
constraints and we are able to achieve a sharper monotonicity formula.  At the end of Section~\ref{MSE2} we
give a comparison between our monotonicity formulas and some of the monotonicity formulas that have already
appeared.  In Section~\ref{MC} we use our first monotonicity formula to show that any blow up limit must be a
configuration consisting of cones.  Section~\ref{TP} connects these cones to the blow up sphere.  We then
consider the tangent plane to  a triple point on the blow up sphere, and we are able to show that energy
minimizers in the tangent plane must also be cones.  Finally, in Section~\ref{classification} we show that
those fluids in the tangent plane must be connected and satisfy the same angle condition as was derived in
\cite{ENS}, but we use different methods from them.


\section{Background on Bounded Variation}    \label{BBV}

In the process of studying the two fluid problem, we discovered that some theorems that we needed were either
scattered in different sources, or embedded within the proof of an existing theorem, but not stated explicitly.
For these reasons we have gathered together the theorems that we need here.  Our main sources here were
\cite{AFP}, \cite{EG}, and \cite{G}.  

We assume that $\Omega \subset \R^n$ is an open set with a differentiable boundary. 
We define $BV(\Omega)$ to be the subset of $L^1(\Omega)$ with bounded variation, measured by 
$$\int_\Omega|Df| = \sup \left\{ \int_\Omega f \, \text{div} \phi \,\,:\,\, \phi\in C^1_c(\Omega;\R^n), 
     \mathit{|\phi|\leq 1} \right\},$$
with the corresponding definition of $BV_{loc}(\Omega)$.  We assume some familiarity with these spaces, including,
for example, the basic structure theorem which asserts that the weak derivative of a $BV$ function can be understood
as a vector-valued Radon measure.  (See for example p. 166-167 of \cite{EG}.)  

\begin{theorem}[Density Theorem I]  \label{DT1}
Let $f \in BV(\Omega).$  Then there exists $\{ f_j \} \subset C^{\infty}(\Omega)$ such that
\begin{enumerate}
   \item $$||f_j - f||_{L^1(\Omega)} \rightarrow 0,$$
   \item $$\int_{\Omega} |Df_j| \; dx \rightarrow \int_{\Omega} |Df| \;,$$
   \item $$\text{for any} \ g \in C^{0}_{c}(\closure{\Omega}; \R^n) \ \ \text{we have} \ \ 
		 \mathit{\int_{\Omega} g \cdot Df_j \; dx \rightarrow \int_{\Omega} g \cdot Df \;.}$$
\end{enumerate}
\end{theorem}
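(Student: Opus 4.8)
The statement is a sharpened form of the classical smooth-approximation (``strict density'') theorem for $BV$ functions, and the plan is to prove it by the standard partition-of-unity-plus-mollification construction (as in Giusti \cite{G} or \cite{EG}); the only work beyond the textbook version is to arrange that one sequence $\{f_{j}\}$ realizes all three conclusions at once, the third of which is a tightness statement at $\partial\Omega$.

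First I would fix a symmetric mollifier $\eta_{\varepsilon}$ and exhaust $\Omega$ by open sets $\Omega_{1}\Subset\Omega_{2}\Subset\cdots$ with $\bigcup_{k}\Omega_{k}=\Omega$ (say $\Omega_{k}=\{x\in\Omega:|x|<k,\ \mathrm{dist}(x,\partial\Omega)>1/k\}$); with $\Omega_{0}=\Omega_{-1}=\emptyset$, the sets $A_{k}:=\Omega_{k+1}\setminus\overline{\Omega_{k-1}}$ form a locally finite open cover of $\Omega$, and I take a subordinate smooth partition of unity $\{\zeta_{k}\}$, so $\zeta_{k}\in C_{c}^{\infty}(A_{k})$, $0\le\zeta_{k}\le1$, $\sum_{k}\zeta_{k}\equiv1$ and hence $\sum_{k}D\zeta_{k}\equiv0$ on $\Omega$. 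For each $j$ I then choose scales $\varepsilon_{k}=\varepsilon_{k}(j)>0$ small enough that: (i) the supports of $\eta_{\varepsilon_{k}}\ast(\zeta_{k}f)$ and $\eta_{\varepsilon_{k}}\ast(f\,D\zeta_{k})$ lie in slightly enlarged sets that still form a locally finite family inside $\Omega$; (ii) $\|\eta_{\varepsilon_{k}}\ast(\zeta_{k}f)-\zeta_{k}f\|_{L^{1}(\Omega)}$ and $\|\eta_{\varepsilon_{k}}\ast(f\,D\zeta_{k})-f\,D\zeta_{k}\|_{L^{1}(\Omega)}$ are each below $2^{-k}/j$; and (iii) $\varepsilon_{k}(j)\le\min\{1/j,\,1/k\}$. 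With $f_{j}:=\sum_{k}\eta_{\varepsilon_{k}}\ast(\zeta_{k}f)$, condition (i) makes the sum locally finite, so $f_{j}\in C^{\infty}(\Omega)$, and condition (ii) together with $\sum_{k}\zeta_{k}=1$ gives $\|f_{j}-f\|_{L^{1}(\Omega)}\le\sum_{k}\|\eta_{\varepsilon_{k}}\ast(\zeta_{k}f)-\zeta_{k}f\|_{L^{1}}<1/j$, which is conclusion~1.

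For conclusion~2, the lower bound $\int_{\Omega}|Df|\le\liminf_{j}\int_{\Omega}|Df_{j}|$ is immediate from the displayed supremum definition of the variation together with conclusion~1, so only the matching upper bound needs proof. Differentiating term by term — convolution commutes with the distributional derivative, and $D(\zeta_{k}f)=\zeta_{k}\,Df+f\,D\zeta_{k}$ — gives $\nabla f_{j}=\sum_{k}\eta_{\varepsilon_{k}}\ast(\zeta_{k}\,Df)+\sum_{k}\eta_{\varepsilon_{k}}\ast(f\,D\zeta_{k})$. Since $\sum_{k}f\,D\zeta_{k}=0$, the second sum equals $\sum_{k}\bigl(\eta_{\varepsilon_{k}}\ast(f\,D\zeta_{k})-f\,D\zeta_{k}\bigr)$, of $L^{1}$-norm below $1/j$ by (ii); for the first sum, the pointwise bound $|\eta_{\varepsilon_{k}}\ast(\zeta_{k}\,Df)|\le\eta_{\varepsilon_{k}}\ast(\zeta_{k}|Df|)$, the fact that mollification does not increase the total mass of a nonnegative measure, and $\sum_{k}\zeta_{k}=1$ give $\sum_{k}\int_{\Omega}|\eta_{\varepsilon_{k}}\ast(\zeta_{k}\,Df)|\le\sum_{k}\int\zeta_{k}\,d|Df|=|Df|(\Omega)$. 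Hence $\int_{\Omega}|Df_{j}|\le|Df|(\Omega)+1/j$, and combined with the lower bound this is conclusion~2.

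Conclusion~3 is the assertion that $Df_{j}\to Df$ weakly-$\ast$ as $\R^{n}$-valued Radon measures tested against all of $C^{0}_{c}(\overline{\Omega};\R^{n})$, i.e.\ with no variation lost to $\partial\Omega$. When $g$ is supported inside $\Omega$ it is immediate: conclusion~1 forces $\int_{\Omega}\phi\cdot\nabla f_{j}\to\int_{\Omega}\phi\cdot Df$ for $\phi\in C^{1}_{c}(\Omega;\R^{n})$, and this extends to $C^{0}_{c}(\Omega;\R^{n})$ because $\sup_{j}|Df_{j}|(\Omega)<\infty$ by conclusion~2. For general $g$ I would fix $\delta>0$, pick $k_{0}$ with $|Df|(\Omega\setminus\Omega_{k_{0}})<\delta$ and a cutoff $\psi\in C_{c}^{\infty}(\Omega)$ with $\psi\equiv1$ on $\Omega_{k_{0}}$, and split $g=\psi g+(1-\psi)g$: the piece $\psi g$ is handled by the previous case, while $\bigl|\int_{\Omega}(1-\psi)g\cdot Df_{j}\bigr|\le\|g\|_{\infty}\bigl(|Df_{j}|(\Omega)-|Df_{j}|(\Omega_{k_{0}})\bigr)$, whose $\limsup$ in $j$ is at most $\|g\|_{\infty}\delta$ since $|Df_{j}|(\Omega)\to|Df|(\Omega)$ by conclusion~2 while $\liminf_{j}|Df_{j}|(\Omega_{k_{0}})\ge|Df|(\Omega_{k_{0}})$ by weak-$\ast$ lower semicontinuity of the mass on the open set $\Omega_{k_{0}}$; letting $\delta\to0$ finishes. (Alternatively one can read conclusion~3 straight off the construction, using $\int_{\Omega}g\cdot\bigl(\eta_{\varepsilon_{k}}\ast(\zeta_{k}\,Df)\bigr)=\int_{\Omega}(\eta_{\varepsilon_{k}}\ast g)\cdot\zeta_{k}\,dDf$, the uniform continuity of $g$, and (iii).) I expect this last step — making a single sequence work for every $g$ simultaneously, equivalently ruling out concentration of the approximating variations at $\partial\Omega$ — to be the only point needing real care; conclusion~2 is precisely what provides it, and everything else is routine mollifier bookkeeping.
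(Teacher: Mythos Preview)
Your proof is correct and follows the standard Anzellotti--Giaquinta/Meyers--Serrin construction found in Giusti \cite{G} and Evans--Gariepy \cite{EG}. The paper itself does not supply a proof of this theorem: Section~\ref{BBV} collects it as a background result from exactly those references, so there is no ``paper's own proof'' to compare against --- your argument is essentially the one those sources give, with the added care in conclusion~3 (the tightness at $\partial\Omega$) spelled out correctly via the mass-balance argument using conclusion~2.
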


\begin{remark}[Not $W^{1,1}$ convergence, but quite close]  \label{nwcbqc}
In any treatment on $BV$ functions care is always taken to emphasize that one does \textit{not}
have $$\int_{\Omega} |D(f_j - f)| \rightarrow 0,$$ in the theorem above.  In particular,
Characteristic functions of smooth sets are in $BV$ but not in $W^{1,1},$ and so $BV$ is
genuinely larger than $W^{1,1}.$  On the other hand, the second part of the theorem above can
be ``localized'' in some useful ways which are not clear from the statement above by itself.
\end{remark}

\begin{theorem}[Density Theorem II]  \label{DT2}
Let $f$ and the $\{ f_j \}$ be taken to satisfy the hypotheses and the conclusions of the theorem above.
Let $\Omega^{\prime} \subset \subset \Omega$ be an open Lipschitz set with 
\begin{equation}
     \int_{\partial \Omega^{\prime}} |Df| = 0 \;.
\label{eq:novonb}
\end{equation}
Then $$\int_{\Omega^{\prime}} |Df_j| \; dx \rightarrow \int_{\Omega^{\prime}} |Df| \;.$$
Furthermore, although simply convolving $f$ (or $f$ extended to be zero outside of $\Omega$) 
with a standard mollifier is insufficient to
produce a sequence of $\{ f_j \}$ with the properties given in the previous theorem, they
will all hold on every $\Omega^{\prime} \subset \subset \Omega$ satisfying Equation \ref{eq:novonb}.
\end{theorem}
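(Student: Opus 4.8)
The plan is to decompose $\Omega$ into the three pairwise disjoint pieces $\Omega^{\prime}$, $\partial\Omega^{\prime}$, and $\Omega\setminus\overline{\Omega^{\prime}}$ (note $\partial\Omega^{\prime}\subset\Omega$ since $\Omega^{\prime}\subset\subset\Omega$) and to play the \emph{lower} semicontinuity of the total variation on each of the two open pieces against the \emph{global} convergence $\int_\Omega|Df_j|\,dx\to\int_\Omega|Df|$ guaranteed by the previous theorem. The lower semicontinuity on an open set $U$ is immediate from the definition: for $g\in C^1_c(U;\R^n)$ with $|g|\le 1$ one integrates by parts to get $\int_U g\cdot\nabla f_j\,dx=-\int_U f_j\,\text{div}\,g\,dx\to-\int_U f\,\text{div}\,g\,dx$, while $\int_U g\cdot\nabla f_j\,dx\le\int_U|Df_j|\,dx$; taking the supremum over such $g$ yields $\int_U|Df|\le\liminf_j\int_U|Df_j|\,dx$. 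I would use this with $U=\Omega^{\prime}$ and with $U=\Omega\setminus\overline{\Omega^{\prime}}$, the $L^1$ convergence on each piece being inherited from $L^1(\Omega)$ convergence.

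Next I would glue the two estimates. For the smooth approximants $|\nabla f_j|\,dx$ is absolutely continuous and $\partial\Omega^{\prime}$ is Lebesgue-null (this is where the Lipschitz regularity of $\Omega^{\prime}$ is used), so $\int_\Omega|Df_j|\,dx=\int_{\Omega^{\prime}}|Df_j|\,dx+\int_{\Omega\setminus\overline{\Omega^{\prime}}}|Df_j|\,dx$ for every $j$; for the limit, the Radon measure $|Df|$ splits in the same way, $\int_\Omega|Df|=\int_{\Omega^{\prime}}|Df|+\int_{\Omega\setminus\overline{\Omega^{\prime}}}|Df|$, precisely because hypothesis \refeqn{novonb} annihilates the $\partial\Omega^{\prime}$ contribution. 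Writing $a_j,b_j$ for the $\Omega^{\prime}$- and $(\Omega\setminus\overline{\Omega^{\prime}})$-integrals of $|Df_j|$ and $A,B$ for the corresponding integrals of $|Df|$, I then have $a_j+b_j\to A+B$ together with $\liminf_j a_j\ge A$ and $\liminf_j b_j\ge B$, and since $A+B=\lim_j(a_j+b_j)\ge\liminf_j a_j+\liminf_j b_j\ge A+B$ this forces $\liminf_j a_j=A$. A one-line subsequence argument then upgrades the $\liminf$ to a limit: if $a_{j_k}\to L$ along a subsequence then $b_{j_k}\to A+B-L$, so $B\le A+B-L$ and hence $L\le A$, giving $\limsup_j a_j\le A$; combined with $\liminf_j a_j=A$ this is $a_j\to A=\int_{\Omega^{\prime}}|Df|$, which is the assertion.

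For the closing remark about mollifiers, I would first note that the reason a plain mollification $f_j:=f*\phi_{1/j}$ does not satisfy conclusion (2) of the previous theorem on all of $\Omega$ is the boundary: the trace of $f$ on $\partial\Omega$ leaks additional variation, so $\int_\Omega|Df_j|\,dx$ overshoots $\int_\Omega|Df|$. On $\Omega^{\prime}\subset\subset\Omega$ this defect is absent. For $j$ large $f_j$ is defined on $\Omega^{\prime}$; conclusions (1) and (3) hold there by the standard mollifier estimates, and for (2) I would use $\nabla f_j=\phi_{1/j}*Df$ to obtain the pointwise bound $|\nabla f_j|\le\phi_{1/j}*|Df|$, whence $\int_{\Omega^{\prime}}|\nabla f_j|\,dx\le|Df|\big(\{\,x:\operatorname{dist}(x,\Omega^{\prime})<1/j\,\}\big)$; as $j\to\infty$ the right-hand side decreases to $|Df|(\overline{\Omega^{\prime}})$, which equals $|Df|(\Omega^{\prime})$ again by \refeqn{novonb}, so $\limsup_j\int_{\Omega^{\prime}}|\nabla f_j|\,dx\le\int_{\Omega^{\prime}}|Df|$; together with the lower semicontinuity bound this gives the convergence on $\Omega^{\prime}$.

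I do not expect a genuine obstacle here — the argument is a soft semicontinuity/squeeze manipulation with no delicate estimates. The only thing that needs care is the measure-theoretic bookkeeping: the set $\partial\Omega^{\prime}$ has to be negligible for \emph{two} measures, namely $n$-dimensional Lebesgue measure (so that the absolutely continuous $|\nabla f_j|\,dx$ do not see it) and the measure $|Df|$ (which is exactly what \refeqn{novonb} provides); and one must stay uniformly away from $\partial\Omega$ throughout, which is why the statement — and in particular the mollifier remark — is made for $\Omega^{\prime}\subset\subset\Omega$ rather than for $\Omega$ itself.
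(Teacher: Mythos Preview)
Your argument is correct and is precisely the standard lower-semicontinuity-plus-squeeze proof. Note, however, that the paper does not actually supply a proof of this theorem: it is stated in Section~\ref{BBV} as a background fact collected from \cite{AFP}, \cite{EG}, and \cite{G}, so there is nothing in the paper to compare against beyond observing that your proof is the one found in those references (see, e.g., \cite[Prop.~1.13]{G} or \cite[Prop.~3.7]{AFP}).
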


\begin{remark}[There are lots of good sets]  \label{MG}
The usefulness of this theorem is unclear until we show the existence of many such $\Omega^{\prime}$
which satisfy Equation \ref{eq:novonb}.  This fact follows from the following theorem found within
Remark 2.13 of \cite{G}.
\end{remark}

\begin{theorem}[Two-sided traces]  \label{tst}
Let $\Omega^{\prime} \subset \subset \Omega$ be an open Lipschitz set and let $f \in BV(\Omega).$  Then 
$f|_{\Omega^{\prime}}$ and $f|_{\Omega^{\prime c}}$ have traces on $\partial \Omega^{\prime}$ which we
call $f_{\Omega^{\prime}}^{-}$ and $f_{\Omega^{\prime}}^{+}$ respectively, and these traces satisfy:
\begin{equation}
     \int_{\partial \Omega^{\prime}} |f_{\Omega^{\prime}}^{+} - f_{\Omega^{\prime}}^{-}| \; d\mathcal{H}^{n-1}
   = \int_{\partial \Omega^{\prime}} |Df|
\label{eq:trform}
\end{equation}
and even $Df = (f_{\Omega^{\prime}}^{+} - f_{\Omega^{\prime}}^{-}) \nu d\mathcal{H}^{n-1}$ where $\nu$ is the
unit outward normal.  Now by taking $\Omega^{\prime} = B_{\rho}(x_0)$ with $x_0 \in \Omega$ then for almost every 
$\rho$ such that $B_{\rho}(x_0) \subset \Omega$ we will have
\begin{equation}
   \int_{\partial B_{\rho}(x_0)} |Df| = 0
\label{eq:goodrho}
\end{equation}
and therefore $f_{\Omega^{\prime}}^{-}(x) = f_{\Omega^{\prime}}^{+}(x) = f(x)$ for $\mathcal{H}^{n-1}$ almost 
every $x \in \partial B_{\rho}(x_0).$
\end{theorem}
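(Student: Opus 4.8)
The plan is to separate the statement into three parts: (a) existence of the one-sided traces $f^{-}_{\Omega'}, f^{+}_{\Omega'}$ together with the interfacial identity $Df\llcorner\partial\Omega' = (f^{+}_{\Omega'}-f^{-}_{\Omega'})\,\nu\,d\mathcal{H}^{n-1}$ for a general Lipschitz $\Omega'\subset\subset\Omega$; (b) the total-variation identity \eqref{eq:trform}, obtained from (a) by taking absolute values; and (c) the case $\Omega'=B_{\rho}(x_0)$, which produces \eqref{eq:goodrho} for a.e.\ $\rho$ and forces the two traces to agree with $f$ on those spheres. The only genuinely non-elementary ingredient is (a), for which I would invoke the standard $BV$ trace and gluing theory (Remark 2.13 of \cite{G}; see also \cite{AFP} and \cite{EG}) rather than reprove it.

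For (a): since $\Omega'\subset\subset\Omega$, the restrictions $f|_{\Omega'}$ and $f|_{\Omega\setminus\overline{\Omega'}}$ have finite total variation on their respective domains, and because $\partial\Omega'$ is Lipschitz the $BV$ trace theorem supplies traces $f^{-}_{\Omega'},f^{+}_{\Omega'}\in L^{1}(\partial\Omega';\mathcal{H}^{n-1})$, the trace of $f|_{\Omega'}$ and of $f|_{\Omega\setminus\overline{\Omega'}}$ respectively. The gluing lemma for $BV$ functions across a Lipschitz hypersurface then decomposes $Df = Df\llcorner\Omega' + Df\llcorner\partial\Omega' + Df\llcorner(\Omega\setminus\overline{\Omega'})$ and identifies the interfacial part as $Df\llcorner\partial\Omega' = (f^{+}_{\Omega'}-f^{-}_{\Omega'})\,\nu\,d\mathcal{H}^{n-1}$, where $\nu$ is the $\mathcal{H}^{n-1}$-a.e.\ defined outward unit normal of $\Omega'$; this is the asserted identity, understood as an equality of the portion of $Df$ carried by $\partial\Omega'$.

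For (b), $|\nu|=1$ for $\mathcal{H}^{n-1}$-a.e.\ point of $\partial\Omega'$, so passing to total variations in (a) gives $|Df|\llcorner\partial\Omega' = |f^{+}_{\Omega'}-f^{-}_{\Omega'}|\,\mathcal{H}^{n-1}\llcorner\partial\Omega'$, and integrating over $\partial\Omega'$ is exactly \eqref{eq:trform}. For the first half of (c), fix $x_0\in\Omega$ and, for $\rho\in(0,\operatorname{dist}(x_0,\partial\Omega))$, set $m(\rho):=|Df|(B_{\rho}(x_0))$. This is finite — $|Df|$ is a Radon measure and hence finite on the compact set $\overline{B_{\rho}(x_0)}\subset\Omega$ — and nondecreasing in $\rho$, so it is continuous off an at most countable, hence Lebesgue-null, set $N$. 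Continuity of the measure $|Df|$ from above and from below yields $|Df|(\overline{B_{\rho}(x_0)})=m(\rho^{+})$ and $|Df|(B_{\rho}(x_0))=m(\rho^{-})$, so $\int_{\partial B_{\rho}(x_0)}|Df| = |Df|(\partial B_{\rho}(x_0)) = m(\rho^{+})-m(\rho^{-})$, which vanishes for every $\rho\notin N$; this is \eqref{eq:goodrho}.

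It remains, for such a ``good'' $\rho$, to identify both traces with $f$. Balls are smooth, so (a) applies with $\Omega'=B_{\rho}(x_0)$, and $\int_{\partial B_{\rho}(x_0)}|Df|=0$ together with (b) gives $f^{+}_{\Omega'}=f^{-}_{\Omega'}$ for $\mathcal{H}^{n-1}$-a.e.\ $x\in\partial B_{\rho}(x_0)$. Moreover $|Df|(\partial B_{\rho}(x_0))=0$ forces the jump part $D^{j}f$ to vanish on $\partial B_{\rho}(x_0)$, so $\mathcal{H}^{n-1}$-a.e.\ point of $\partial B_{\rho}(x_0)$ lies outside the jump set of $f$; since a $BV$ function has a two-sided approximate limit $\mathcal{H}^{n-1}$-a.e.\ off its jump set, at $\mathcal{H}^{n-1}$-a.e.\ $x\in\partial B_{\rho}(x_0)$ the function $f$ has an approximate limit $\widetilde f(x)$, and the half-ball averages that define the one-sided traces converge to that same value. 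Taking $f(x):=\widetilde f(x)$ then gives $f^{-}_{\Omega'}(x)=f^{+}_{\Omega'}(x)=f(x)$ for $\mathcal{H}^{n-1}$-a.e.\ $x\in\partial B_{\rho}(x_0)$. I expect the main obstacle to be exactly the input (a): one must know both that the two one-sided restrictions are again $BV$ with well-defined $L^{1}$ traces on the Lipschitz boundary and that the interface carries precisely $(f^{+}_{\Omega'}-f^{-}_{\Omega'})\nu\,\mathcal{H}^{n-1}$; granting that classical fact, (b) and (c) reduce to the elementary observations that a monotone function has countably many jumps and that a finite measure is continuous along monotone sequences of sets.
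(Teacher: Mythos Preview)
Your proposal is correct and follows the standard route. Note, however, that the paper does not actually prove this theorem: it is stated in Section~\ref{BBV} as background material and attributed to Remark~2.13 of \cite{G}, with no proof given in the paper itself. Your argument---invoking the $BV$ trace/gluing theory for (a), passing to total variations for (b), and using the monotonicity of $\rho\mapsto |Df|(B_{\rho}(x_0))$ to get at most countably many bad radii for (c)---is exactly the standard justification one finds in \cite{G}, \cite{AFP}, or \cite{EG}, so there is nothing to compare beyond observing that you have supplied what the paper deliberately outsourced to the literature.
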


From the proof of Lemma~2.4 of \cite{G}, we extract
\begin{theorem}\label{thm:giusti2.4}
Let $\tilde{\mathcal{B}}_R$ denote the ball in $\R^{n-1}$ centered at $0$ with radius $R.$
Let $C^+_R = \tilde{\mathcal{B}}_R \times (0,R)$ and $f\in BV(C^+_R)$.
Let $0<\epsilon^\prime <\epsilon< R$, and set
$Q_{\epsilon,\epsilon^\prime} = \tilde{\mathcal{B}}_R\times(\epsilon^\prime ,\epsilon)$.  Then
\begin{equation}
          \int_{\tilde{\mathcal{B}}_R} |f_\epsilon - f_{\epsilon^\prime}| \, d\mathcal{H}^{n-1}
   \leq \int_{Q_{\epsilon,\epsilon^\prime}} |D_nf|\, dx.
\end{equation}
\end{theorem}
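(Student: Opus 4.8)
The plan is to prove the inequality first for $g\in C^{\infty}(C^+_R)\cap BV(C^+_R)$ by the fundamental theorem of calculus, and then to reach a general $f\in BV(C^+_R)$ by mollification. Here $f_\epsilon$ denotes the horizontal slice $x'\mapsto f(x',\epsilon)$, which by Fubini's theorem is a well-defined element of $L^1(\tilde{\mathcal{B}}_R)$ for a.e.\ $\epsilon\in(0,R)$, and the estimate will be established for a.e.\ pair $0<\epsilon'<\epsilon<R$ --- which is what is needed downstream, where it is used together with the a.e.-in-$\rho$ assertions of Theorem~\ref{tst}. For smooth $g$ one writes $g(x',\epsilon)-g(x',\epsilon')=\int_{\epsilon'}^{\epsilon}\partial_n g(x',t)\,dt$, takes absolute values, integrates over $x'\in\tilde{\mathcal{B}}_R$, and applies Fubini (the right side is finite since $\int_{C^+_R}|\partial_n g|\le\int_{C^+_R}|Dg|<\infty$), obtaining exactly $\int_{\tilde{\mathcal{B}}_R}|g_\epsilon-g_{\epsilon'}|\,d\mathcal{H}^{n-1}\le\int_{Q_{\epsilon,\epsilon'}}|D_n g|\,dx$.

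For general $f$, fix a radius $r<R$ and set $f_\sigma=f*\rho_\sigma$, which is smooth where $\mathrm{dist}(x,\partial C^+_R)>\sigma$; for $\sigma$ small this region contains $\overline{\tilde{\mathcal{B}}_r}\times[\epsilon',\epsilon]$, so the smooth estimate applies to $f_\sigma$ over $\tilde{\mathcal{B}}_r$. The crucial point is the identity $\partial_n f_\sigma=(D_n f)*\rho_\sigma$, after which the standard bound $\int_U|(D_n f)*\rho_\sigma|\le|D_n f|(U+B_\sigma)$ for the mollification of a measure gives
\[
  \int_{\tilde{\mathcal{B}}_r}\bigl|(f_\sigma)_\epsilon-(f_\sigma)_{\epsilon'}\bigr|\,d\mathcal{H}^{n-1}\ \le\ |D_n f|\bigl(\tilde{\mathcal{B}}_{r+\sigma}\times(\epsilon'-\sigma,\epsilon+\sigma)\bigr).
\]
Then I would let $\sigma\downarrow 0$: for a.e.\ $\epsilon$ (the $L^1$-Lebesgue points of $t\mapsto f(\cdot,t)$) the left side tends to $\int_{\tilde{\mathcal{B}}_r}|f_\epsilon-f_{\epsilon'}|\,d\mathcal{H}^{n-1}$ by standard properties of mollifiers, while the cylinders on the right decrease to $\overline{\tilde{\mathcal{B}}_r}\times[\epsilon',\epsilon]$, so continuity from above of the finite measure $|D_n f|$ carries the inequality to the limit; letting $r\uparrow R$ (monotone convergence on the left, continuity from below on the right) then replaces $\tilde{\mathcal{B}}_r$ by $\tilde{\mathcal{B}}_R$ and the slab by $\tilde{\mathcal{B}}_R\times[\epsilon',\epsilon]$. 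Finally, since $|D_n f|$ is finite, only countably many heights $t$ have $|D_n f|(\tilde{\mathcal{B}}_R\times\{t\})>0$; restricting $\epsilon,\epsilon'$ to avoid these (still a full-measure restriction) collapses the closed slab to the open one, so the right side becomes $|D_n f|(\tilde{\mathcal{B}}_R\times(\epsilon',\epsilon))=\int_{Q_{\epsilon,\epsilon'}}|D_n f|\,dx$.

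The real obstacle is keeping the single component $|D_n f|$ --- not the full gradient $|Df|$ --- on the right. If one instead approximated $f$ by the smooth functions of Theorem~\ref{DT1}, the smooth slice estimate would only bound the left side by $\int_{Q_{\epsilon,\epsilon'}}|Df_j|$, which for good levels tends to $\int_{Q_{\epsilon,\epsilon'}}|Df|$, a strictly weaker bound; mollification repairs this precisely because $\partial_n(f*\rho_\sigma)=(D_n f)*\rho_\sigma$, at the cost of the limiting bookkeeping above, where the $\sigma$-fattening of the cylinder is exactly what forces the choices of $\epsilon,\epsilon'$ at $|D_n f|$-null heights and the passage $r\to R$. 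A cleaner route, at the cost of citing the one-dimensional slicing theory of $BV$ from \cite{AFP}, is to use that for a.e.\ $x'$ the slice $t\mapsto f(x',t)$ lies in $BV((0,R))$ with $\int_{\tilde{\mathcal{B}}_R}|D(f(x',\cdot))|(A)\,dx'=|D_n f|(\tilde{\mathcal{B}}_R\times A)$ for Borel $A$, and the elementary one-variable inequality $|f(x',\epsilon)-f(x',\epsilon')|\le|D(f(x',\cdot))|((\epsilon',\epsilon))$; integrating in $x'$ yields the claim directly, subject to the same null-set caveats (matching the Fubini slice values with the good one-dimensional representatives, and avoiding heights that are atoms for a positive-measure set of $x'$).
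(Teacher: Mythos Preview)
Your argument is correct and follows precisely the approach in Giusti's Lemma~2.4, which is exactly what the paper invokes: the paper does not supply an independent proof but extracts the inequality directly from Giusti's proof. Both the mollification route (using $\partial_n(f*\rho_\sigma)=(D_nf)*\rho_\sigma$ to preserve the single component $|D_nf|$) and the one-dimensional slicing alternative you outline are standard and match the cited source.
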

We will need
\begin{lemma}
\label{G5.3}
Let $f \in BV(B_R)$ and $0 < \rho < r < R.$  Then
\begin{equation}
\begin{array}{rl}
   \displaystyle{\int_{\partial B_1} |f^{-}(rx) - f^{-}(\rho x)| \; d\mathcal{H}^{n-1}}
  &\displaystyle{\leq \int_{B_r \setminus B_{\rho}} \left| \left\langle \frac{x}{|x|^n}, Df \right\rangle \right|} \\
\ \\
\ &\text{and} \\
\ \\
   \displaystyle{\int_{\partial B_1} |f^{+}(rx) - f^{+}(\rho x)| \; d\mathcal{H}^{n-1}} 
  &\displaystyle{\leq \int_{\closure{B_r} \setminus \closure{B_{\rho}}} \left| \left\langle \frac{x}{|x|^n}, Df \right\rangle \right| \ .} 
\end{array}
\label{eq:G5.1}
\end{equation}
\end{lemma}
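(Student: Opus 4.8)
The plan is to prove the estimate first for smooth $f$, where it is just the fundamental theorem of calculus along rays, and then to pass to a general $BV$ function by approximation, treating the two sides of the inequality separately because, by Remark~\ref{nwcbqc}, a sphere carries no control under mere $L^1$ convergence. (Conceptually the inequality is the integral over $\partial B_1$ of the one-dimensional bound $|g(r)-g(\rho)|\le\int_\rho^r|Dg|$ for $g=f(t\,\cdot\,)$ on the ray through $x$; the polar change of variables $x\mapsto(|x|,x/|x|)$ is what produces the weight $|x|^{-n}$ and also dictates that the two versions differ only by whether $\partial B_\rho$ and $\partial B_r$ are included.) For $u\in C^1(B_R)$, where $u^-=u^+=u$ on every sphere, write $u(rx)-u(\rho x)=\int_\rho^r\nabla u(tx)\cdot x\,dt$ for $x\in\partial B_1$, take absolute values, integrate over $\partial B_1$, use Fubini, and substitute $y=tx$ (so $dt\,d\mathcal H^{n-1}(x)=|y|^{1-n}\,dy$ and $\nabla u(tx)\cdot x=\nabla u(y)\cdot y/|y|$, so the integrand becomes $|\nabla u(y)\cdot y/|y|^n|\,dy$); this gives
\[
   \int_{\partial B_1}|u(rx)-u(\rho x)|\,d\mathcal H^{n-1}\;\le\;\int_{B_r\setminus B_\rho}\Bigl|\Bigl\langle\tfrac{x}{|x|^n},Du\Bigr\rangle\Bigr|,
\]
the ``spherical'' analogue of Theorem~\ref{thm:giusti2.4} (one could also derive it from that theorem by working in local coordinates on $\partial B_1$).

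Next, let $f\in BV(B_R)$. By Theorem~\ref{tst} the radii $s$ with $\int_{\partial B_s}|Df|=0$ are of full measure; call these \emph{good}, and first take $\rho<r$ both good, so that $f^-=f^+=f$ on both spheres and the open and closed annuli differ by a $|Df|$-null set. Pick $f_j\in C^\infty(B_R)$ as in Theorem~\ref{DT1} and apply the smooth inequality to each $f_j$. On the Lipschitz annulus $A=B_r\setminus\closure{B_\rho}$ one has $\int_{\partial A}|Df|=0$, so Theorem~\ref{DT2} gives $\int_A|Df_j|\to\int_A|Df|$; combined with the weak-$*$ convergence of the gradients (part~3 of Theorem~\ref{DT1}) and the fact that $x/|x|^n$ is continuous and bounded on $\closure A$ (using $\rho>0$), a Reshetnyak-type continuity argument upgrades this to $\int_A|\langle x/|x|^n,Df_j\rangle|\to\int_A|\langle x/|x|^n,Df\rangle|$. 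The same strict convergence on $A$, whose boundary is the two spheres, makes $f_j(r\,\cdot)\to f^-(r\,\cdot)$ and $f_j(\rho\,\cdot)\to f^-(\rho\,\cdot)$ in $L^1(\partial B_1)$ by continuity of the trace operator under strict $BV$ convergence. Passing to the limit proves the lemma for all good $\rho<r$.

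Finally I would remove the restriction to good radii. The measure $\mu:=|\langle x/|x|^n,Df\rangle|$ is finite on $B_r\setminus B_{\rho/2}$ because there $\mu\le(\rho/2)^{1-n}|Df|$. For the first inequality choose good radii $\rho_k\uparrow\rho$ and $r_k\uparrow r$: the set-theoretic $\limsup$ of the annuli $B_{r_k}\setminus B_{\rho_k}$ is exactly $B_r\setminus B_\rho$, so reverse Fatou gives $\limsup_k\mu(B_{r_k}\setminus B_{\rho_k})\le\mu(B_r\setminus B_\rho)$, while approaching $\partial B_\rho$ and $\partial B_r$ from the inside forces $f^-(\rho_k\,\cdot)\to f^-(\rho\,\cdot)$ and $f^-(r_k\,\cdot)\to f^-(r\,\cdot)$ in $L^1(\partial B_1)$ --- this last convergence is itself a consequence of the good-radius inequality (which shows these nets of spherical restrictions are Cauchy, since $\mu$ of a shrinking shell tends to $0$) together with the identification of $f^-$ with those limits in Theorem~\ref{tst}. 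Letting $k\to\infty$ in the good-radius inequality yields the first inequality; the second is identical with $\rho_k\downarrow\rho$, $r_k\downarrow r$ and all one-sided limits taken from outside, which is precisely why the closed annulus appears there.

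The only genuinely delicate point is the one flagged above: the smooth inequality cannot simply be ``passed to the limit,'' since $f_j\to f$ in $L^1$ with bounded variation controls neither the spherical traces nor the weighted gradient integral (Remark~\ref{nwcbqc}). The resolution --- and the reason Density Theorem~II and the two-sided trace theorem are stated in Section~\ref{BBV} --- is to localize to radii where $|Df|$ puts no mass on the sphere, obtain the honest convergence $\int_A|Df_j|\to\int_A|Df|$ there, and only afterward transfer to arbitrary radii, where the sole remaining care is the choice of side from which each boundary sphere is approached.
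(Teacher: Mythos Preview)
The paper does not supply its own proof of this lemma; it is one of the background facts collected in Section~\ref{BBV}, and the label indicates that it is Lemma~5.3 of Giusti~\cite{G}. Your argument is correct and is essentially the standard one found there: the smooth case via the fundamental theorem of calculus along rays and the polar change of variables, passage to ``good'' radii using strict $BV$ approximation (Theorems~\ref{DT1}--\ref{DT2}) together with Reshetnyak continuity for the right-hand side and continuity of the trace under strict convergence for the left, and finally monotone approximation in the radii from the appropriate side to reach arbitrary $\rho<r$ and to distinguish $f^{-}$ from $f^{+}$. The only ingredient you invoke that is not explicitly recorded in Section~\ref{BBV} is the Reshetnyak continuity theorem needed to get $\int_A\bigl|\langle x/|x|^n,Df_j\rangle\bigr|\to\int_A\bigl|\langle x/|x|^n,Df\rangle\bigr|$ from strict convergence on the annulus; this is the correct tool and is available in~\cite{AFP}.
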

We conclude with Helly's Selection Theorem which is the standard $BV$ compactness theorem:
\begin{theorem}[Helly's Selection Theorem]   \label{Helly}
Given $U \subset \R^n$ and a sequence of functions $\{f_j\}$ in $BV_{loc}(U)$ such that for any
$W \subset \subset U$ there is a constant $C < \infty$ depending only on $W$ which satisfies:
\begin{equation}
   ||f_j||_{BV(W)} := ||f_j||_{L^1(W)} + \int_W |Df_j| \leq C
\label{eq:HellConst}
\end{equation}
then there exists a subsequence $\{ f_{j_k} \}$ and a function $f \in BV_{loc}(U)$ such that on every $W \subset \subset U$
we have
\begin{equation}
   ||f_{j_k} - f||_{L^1(W)} \rightarrow 0
\label{eq:HellConv1}
\end{equation}
and
\begin{equation}
   \int_W |Df| \leq \liminf \int_W |Df_{j_k}| \;.
\label{eq:HellConv2}
\end{equation}
\end{theorem}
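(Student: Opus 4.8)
The plan is to run the classical $BV$ compactness argument: use the uniform bound \refeqn{HellConst} to produce, on each compactly contained piece of $U$, an $L^1$ modulus of continuity, feed this into an $L^1$ compactness criterion, diagonalize over an exhaustion of $U$ to get a single subsequence and limit $f$, and finally invoke lower semicontinuity of the total variation. First I would fix an exhaustion $W_1 \subset\subset W_2 \subset\subset \cdots$ of $U$ by open sets with Lipschitz boundary (e.g. finite unions of balls) with $\bigcup_m W_m = U$. For each $m$ the hypothesis supplies a constant $C_m$ with $\|f_j\|_{BV(W_m)} \le C_m$ for all $j$, so the real content is to upgrade this static bound to genuine precompactness of $\{f_j\}$ in $L^1(W_m)$.

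The engine of the compactness step is the translation estimate: for $f \in BV(W)$ and $h \in \R^n$ with $W' + B_{|h|} \subset W$, one has $\int_{W'} |f(x+h) - f(x)|\, dx \le |h| \int_W |Df|$. For smooth $f$ this follows from the fundamental theorem of calculus and Fubini; for general $f \in BV(W)$ I would apply Density Theorem~\refthm{DT1} with $\Omega = W$ to choose $f_\ell \in C^\infty(W)$ with $f_\ell \to f$ in $L^1(W)$ and $\int_W |Df_\ell| \to \int_W |Df|$, prove the estimate for each $f_\ell$, and pass to the limit on both sides. Applied to the $f_j$, this gives a modulus of continuity uniform in $j$, which together with the uniform $L^1$ bound is exactly the hypothesis of the Fréchet–Kolmogorov compactness criterion; hence $\{f_j\}$ is precompact in $L^1(W)$ for every Lipschitz $W \subset\subset U$.

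Next I would diagonalize. Precompactness on $W_1$ yields a subsequence converging in $L^1(W_1)$; refining successively on $W_2, W_3, \dots$ and passing to the diagonal produces one subsequence $\{f_{j_k}\}$ converging in $L^1(W_m)$ for every $m$, and hence in $L^1(W)$ for every $W \subset\subset U$. The limits agree on overlaps, so they define a single $f \in L^1_{loc}(U)$, establishing \refeqn{HellConv1}. For the variation inequality \refeqn{HellConv2}, which also certifies $f \in BV_{loc}(U)$, I would test against vector fields: fix $W \subset\subset U$ and $\phi \in C^1_c(W; \R^n)$ with $|\phi| \le 1$. Since $\mathrm{supp}\,\phi \subset\subset W$ and $f_{j_k} \to f$ in $L^1(W)$,
$$ \int_W f \, \mathrm{div}\,\phi \; dx = \lim_{k} \int_W f_{j_k}\, \mathrm{div}\,\phi \; dx \le \liminf_k \int_W |Df_{j_k}|, $$
and taking the supremum over all such $\phi$ gives $\int_W |Df| \le \liminf_k \int_W |Df_{j_k}|$.

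I expect the main obstacle to be the compactness step itself — converting the $BV$ bound into $L^1$ precompactness — since this is the only place the special structure of $BV$ is genuinely exploited. The delicate points are ensuring the modulus of continuity is uniform in $j$ (not merely finite for each $j$) and managing the inevitable shrinkage $W' \subset\subset W$ forced by the translation estimate so that it is compatible with the exhaustion and the diagonal extraction; the remaining lower-semicontinuity argument is soft and formal by comparison.
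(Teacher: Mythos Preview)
The paper does not actually prove Theorem~\ref{Helly}; it is stated in Section~\ref{BBV} purely as background, with the authors pointing to \cite{AFP}, \cite{EG}, and \cite{G} for the surrounding material. So there is no ``paper's own proof'' to compare against. Your outline is the standard textbook argument one finds in those references: the translation estimate $\int_{W'}|f(x+h)-f(x)|\,dx \le |h|\int_W |Df|$ (obtained via the smooth approximations of Theorem~\ref{DT1}), Fr\'echet--Kolmogorov compactness in $L^1$, a diagonal extraction over an exhaustion, and then lower semicontinuity of total variation by duality. This is correct and complete in spirit; the only small care point you already flagged is the shrinkage $W' \subset\subset W$ in the translation estimate, which is harmless once the exhaustion is chosen so that each $W_m \subset\subset W_{m+1}$.
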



\section{Definitions, Notation, and more Background}  \label{DNB}

We denote the surface tension at the interface of $E_i$ and $E_j$ with $\sigma_{ij},$ we
use $\beta_i$ as the coefficient that determines the wetting energy of $E_i$ on the
boundary of the container, we let $\rho_i$ be the density of the \!\ith fluid, and we use
$g$ as the gravitational constant.  The domain $\Omega$ is the container, and we assume
$B_1 \subset \subset \Omega \subset \R^n.$  We define


\begin{equation}
  \begin{array}{rl}
     \alpha_0 \!\!&:= \frac{1}{2}(\sigma_{01} + \sigma_{02} - \sigma_{12}) \\
\ \\
     \alpha_1 \!\!&:= \frac{1}{2}(\sigma_{01} + \sigma_{12} - \sigma_{02}) \\
\ \\
     \alpha_2 \!\!&:= \frac{1}{2}(\sigma_{02} + \sigma_{12} - \sigma_{01}) \;, \\
  \end{array}
\label{eq:sigs}
\end{equation}
and we will assume
\begin{equation}
   \alpha_j > 0, \ \ \text{for all} \ j
\label{eq:strtri}
\end{equation}
throughout our paper and refer to this condition as the strict triangle inequality, but note that this
condition is frequently called the strict triangularity hypothesis.  (See \cite{L2001} for example.)

\begin{definition}[Permissible configurations]  \label{permconf}
The triple of open sets $\{E_j\}$ is said to be a permissible configuration or more simply ``permissible'' if
\begin{enumerate}
   \item The $E_j$ are sets of finite perimeter.
   \item The $E_j$ are disjoint.
   \item The union of their closures is $\closure{\Omega}.$
\end{enumerate}
In a case where volumes are prescribed, in order for sets to be \textit{V-permissible}
we will add to this list a fourth item:
\begin{itemize}
    \item[4.] The volumes are prescribed: $|E_j| = v_j$ for $j=0,1,2$.
\end{itemize}
\end{definition}
See Figure~\ref{fig:ThreeFluids}.

\begin{figure}[!h]
	\centering
	\scalebox{.45}{\includegraphics{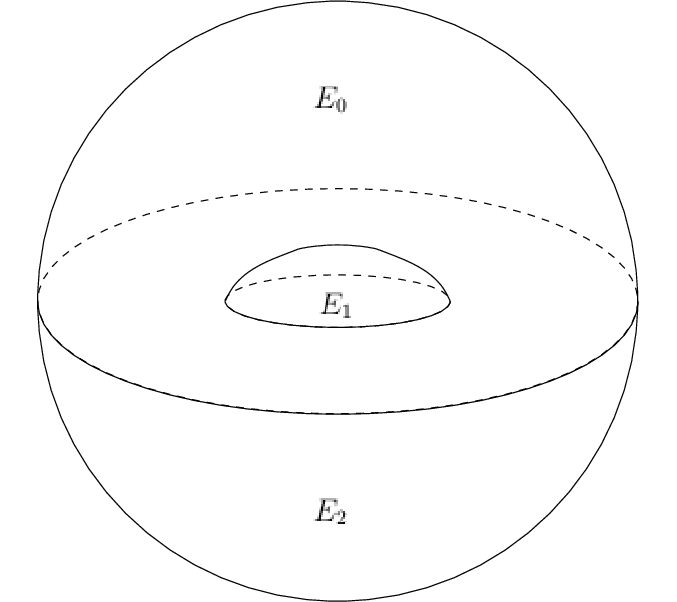}}
	\caption{Permissible sets $\{E_j\}$.}
	\label{fig:ThreeFluids}
\end{figure}

\noindent
The full energy functional which sums surface tension, wetting energy, and potential energy due to gravity is given by:
\begin{equation}
  \mathcal{F}_{SWP}(\{E_j\}) 
    := \sum_{j = 0}^2 \left( \alpha_j \int_{\Omega} |D\chisub{E_j}|
                           + \beta_j \int_{\partial \Omega} \chisub{E_j} \; d\mathcal{H}^{n-1} 
                           + \rho_j g \int_{E_j} z \; dV \right),
  \label{eq:BigFDef}
\end{equation}

\noindent
As we scale inward we can eliminate the wetting energy entirely and view our solution restricted to an
interior ball as a minimizer of an energy given by:
\begin{equation}
  \mathcal{F}_{SP}(\{E_j\}) 
    := \sum_{j = 0}^2 \left( \alpha_j \int_{\Omega} |D\chisub{E_j}|
                           + \rho_j g \int_{E_j} z \; dV \right).
  \label{eq:SPDef}
\end{equation}
Of course this energy we will frequently consider on subdomains, so for
$\Omega^{\prime} \subset \subset \Omega$ we define:
\begin{equation}
  \mathcal{F}_{SP}(\{E_j\},\Omega^{\prime}) 
    := \sum_{j = 0}^2 \left( \alpha_j \int_{\Omega^{\prime}} |D\chisub{E_j}|
                           + \rho_j g \int_{E_j \cap \Omega^{\prime}} z \; dV \right).
  \label{eq:SPODef}
\end{equation}
Massari showed that this energy functional is
lower semicontinuous in \cite{Mas} under certain assumptions on the constants.  (In fact he showed it for
$\mathcal{F}_{SWP},$ but where $\beta_j \equiv 0$
is allowed.)  The lower semicontinuity of $\mathcal{F}_{SP}$ ensures that this Dirichlet problem is
well-posed, although it does not guarantee that the Dirichlet data is attained in the usual sense.  In fact, a minimizer
can actually have any Dirichlet data, but if it does not match up with the given data, then it must pay for an interface
at the boundary.  Summarizing these statements from \cite{Mas} we can say:

\begin{theorem}[Massari's Existence Theorem]   \label{BackMass}
If 
\begin{equation} 
    \begin{array}{rl}
                 \alpha_j &\!\!\!\geq 0, \\
                 \alpha_i + \alpha_j &\!\!\!\geq |\beta_i - \beta_j|,
    \end{array}
\label{eq:nowig}
\end{equation}
for $i,j = 0,1,2,$ if $v_0 + v_1 + v_2 = |\Omega|,$ and
if $\Omega$ satisfies an interior sphere condition, then there exists a minimizer to
$\mathcal{F}_{SWP}$ among permissible triples $\{ E_j \}$ with $|E_j| = v_j.$
The same statement is true if $\mathcal{F}_{SWP}$ is replaced by
either $\mathcal{F}_{SP}$ or $\mathcal{F}_{S}.$  ($\mathcal{F}_{S}$ is defined below.)
Assuming that we allow a two sided trace of our BV characteristic functions on the boundary of our domain,
and making the same assumptions as above, then there will also exist minimizers which satisfy given Dirichlet
data.  (Of course one should refer to the discussion above regarding the nature of Dirichlet data for this
problem.)
\end{theorem}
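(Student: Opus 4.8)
The plan is to argue by the direct method of the calculus of variations: produce a minimizing sequence, extract a limit using the $BV$ compactness theorem (Theorem~\ref{Helly}), and conclude by lower semicontinuity of $\mathcal{F}_{SWP}$. First I would verify that the infimum is a finite real number. It is bounded below on the class of $V$-permissible triples because each of the three families of terms is: the surface-tension terms since $\alpha_j\ge0$, the wetting terms since $\mathcal{H}^{n-1}(\partial\Omega)<\infty$ and $0\le\chisub{E_j}\le1$ pointwise, and the gravitational terms since $z$ is bounded on the bounded set $\Omega$. It is finite because a competitor of finite energy always exists: cutting $\Omega$ by two parallel hyperplanes produces, via the intermediate value theorem applied to $t\mapsto|\Omega\cap\{x_1<t\}|$, a partition into three finite-perimeter pieces with the prescribed volumes $v_0,v_1,v_2$ (here $v_0+v_1+v_2=|\Omega|$ is used). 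Fix a minimizing sequence $\{E_j^k\}_k$.

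Next comes compactness. Using the strict triangularity hypothesis~(\ref{eq:strtri}) (so all $\alpha_j>0$) together with the partition identity $\chisub{E_0^k}+\chisub{E_1^k}+\chisub{E_2^k}=1$ a.e.\ in $\Omega$, the perimeters $\int_\Omega|D\chisub{E_j^k}|$ are bounded uniformly in $k$, since the surface-tension part of $\mathcal{F}_{SWP}$ controls them up to the (bounded) wetting and gravitational contributions; as also $\|\chisub{E_j^k}\|_{L^1(\Omega)}\le|\Omega|$, the sequences $\{\chisub{E_j^k}\}_k$ are bounded in $BV(\Omega)$. Applying Theorem~\ref{Helly}, and then, since $\Omega$ is bounded, upgrading the $L^1_{loc}$ convergence to $L^1(\Omega)$ convergence (the tails $\int_{\Omega\setminus W}|\chisub{E_j^k}-\chisub{E_j}|\le2|\Omega\setminus W|\to0$ as $W\uparrow\Omega$, uniformly in $k$), then passing to a further subsequence converging a.e., I obtain measurable sets $E_j$ with $\chisub{E_j^k}\to\chisub{E_j}$ in $L^1(\Omega)$ and a.e. The pointwise limit of the partition identity shows that, with the standard choice of representatives understood, $\{E_j\}$ is a permissible triple; $\int_\Omega|D\chisub{E_j}|<\infty$ by the lower-semicontinuity assertion~(\ref{eq:HellConv2}); and $|E_j|=\lim_k|E_j^k|=v_j$ because $L^1(\Omega)$ convergence of characteristic functions preserves measure. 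Hence the limit is $V$-permissible, i.e.\ the constrained class is sequentially closed under this mode of convergence.

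It remains to establish $\mathcal{F}_{SWP}(\{E_j\})\le\liminf_k\mathcal{F}_{SWP}(\{E_j^k\})$, for then the right-hand side equals the infimum and, $\{E_j\}$ being admissible, equality must hold. The gravitational terms pass to the limit by continuity, since $z\in L^\infty(\Omega)$; each surface-tension term satisfies $\alpha_j\int_\Omega|D\chisub{E_j}|\le\liminf_k\alpha_j\int_\Omega|D\chisub{E_j^k}|$ by lower semicontinuity of the total variation under $L^1$ convergence and $\alpha_j\ge0$. The one genuinely delicate point — and I expect it to be the main obstacle — is the wetting term $\sum_j\beta_j\int_{\partial\Omega}\chisub{E_j}\,d\mathcal{H}^{n-1}$: traces on $\partial\Omega$ are not continuous under $L^1(\Omega)$ convergence, so this term by itself is not lower semicontinuous, and one must show that the \emph{combined} functional $\sum_j\alpha_j\int_\Omega|D\chisub{E_j}|+\sum_j\beta_j\int_{\partial\Omega}\chisub{E_j}$ is. This is exactly where hypothesis~(\ref{eq:nowig}), $\alpha_i+\alpha_j\ge|\beta_i-\beta_j|$, and the interior sphere condition on $\Omega$ enter; following Massari \cite{Mas}, I would extend the competitors across $\partial\Omega$ into a fixed slightly larger Lipschitz domain, absorbing the boundary term into an interior interface carrying an effective surface tension which, thanks to~(\ref{eq:nowig}), still obeys a triangle-type inequality, so that the extended functional is a sum of weighted perimeters on the larger domain and hence lower semicontinuous in $L^1$. (Alternatively one simply invokes Massari's lower-semicontinuity theorem from \cite{Mas} as a black box, which is how the present statement is phrased.) Combining the three, $\mathcal{F}_{SWP}$ is lower semicontinuous and $\{E_j\}$ is a minimizer.

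For the stated variants the same scheme applies. Dropping the wetting term gives $\mathcal{F}_{SP}$ and dropping also the gravitational term gives $\mathcal{F}_S$; for both, the delicate step above disappears and lower semicontinuity reduces to that of the total variation (plus continuity of the potential for $\mathcal{F}_{SP}$), so the argument is strictly easier and needs no condition on the $\beta_j$. For the Dirichlet problem one prescribes a boundary partition $\{g_j\}$ and, allowing the two-sided traces furnished by Theorem~\ref{tst}, reformulates it as minimization on a fixed larger domain $\tilde\Omega\supset\supset\Omega$ of the functional obtained by forcing $\chisub{E_j}\equiv g_j$ on $\tilde\Omega\setminus\Omega$ — precisely the penalization in which a minimizer ``pays for an interface at the boundary'' when its trace disagrees with the data — and the identical compactness-plus-lower-semicontinuity argument, again using~(\ref{eq:nowig}) to control the newly created boundary interface, yields a minimizer. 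Throughout, the volume constraint causes no difficulty since it is preserved under $L^1(\Omega)$ convergence; the only real work is taming the interaction between the boundary term and the weak topology, which~(\ref{eq:nowig}) is designed to do.
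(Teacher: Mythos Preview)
The paper does not supply its own proof of this theorem: it is quoted as a background result summarizing Massari \cite{Mas}, preceded by the sentence ``Summarizing these statements from \cite{Mas} we can say,'' and no argument follows. So there is no ``paper's proof'' to compare against; your sketch is in fact a reconstruction of Massari's direct-method argument, and it is essentially correct in outline: minimizing sequence, $BV$ compactness via Theorem~\ref{Helly}, closure of the admissible class under $L^1$ convergence, and lower semicontinuity with the wetting term handled by the condition $\alpha_i+\alpha_j\ge|\beta_i-\beta_j|$.

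One small point worth flagging: in your compactness step you invoke the strict triangularity hypothesis~(\ref{eq:strtri}) to get uniform perimeter bounds, but the theorem as stated only assumes $\alpha_j\ge0$. This is harmless in the context of the present paper, which adopts~(\ref{eq:strtri}) as a standing assumption, but if you want the result at the generality stated you should note that the partition identity $\chisub{E_0}+\chisub{E_1}+\chisub{E_2}=1$ lets you bound any one perimeter by the sum of the other two, so it suffices that at least two of the $\alpha_j$ be positive; the fully degenerate cases require a separate (easy) discussion. Otherwise your identification of the only delicate step --- joint lower semicontinuity of the surface-tension and wetting terms, which is exactly Massari's contribution and exactly where~(\ref{eq:nowig}) and the interior sphere condition enter --- is on target.
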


\begin{remark}[Appropriate Problems]  \label{AppProbs}
It seems worthwhile to observe here the necessity of prescribing Dirichlet data in any problem without a volume
constraint.  Indeed, without a volume constraint or Dirichlet data, one expects two of the three fluids to
vanish in any minimizer.  On the other hand, once you have a volume constraint, you can study the minimizers
both with and without Dirichlet data.
\end{remark}

At this point, we standardize our language for the type of minimizer that we are considering in order to 
prevent language from becoming too cumbersome.
\begin{definition}[Types of Minimizers]  \label{MinTypes}
We will use the syntax:
$$[\; \text{Qualifier(s)}\;]\text{-minimizer} \ \text{of} \ [\; \text{functional} \;] \ \text{in} \ [\; \text{set} \;] \;.$$
The ``qualifiers'' we will use are ``D'' and/or ``V'' to indicate a Dirichlet or a volume constraint
respectively.  So a typical appearance might look like:  $\{E_j\}$ is a
V-minimizer of $\mathcal{F}_{SP}$ in $B_3,$ which means that $\{E_j\}$ are V-permissible
and minimize $\mathcal{F}_{SP}$ in $B_3$ among all V-permissible sets.  If the set is not specified,
then we will assume that the minimization happens on $\Omega$;  If the functional is not specified,
then we assume that $\mathcal{F}_{S}$ is the functional being minimized.  The set given will typically be
bounded, but when it is not bounded we will assume that anything which we call any kind of minimizer will
minimize the given functional when restricted to any compact subset of the unbounded domain.
\end{definition}

\begin{remark}[On restrictions and rescalings]  \label{ResRes}
It is also worth remarking that after restricting and rescaling, a triple which used to
V-minimize some functional will still V-minimize some functional in the new set, but
except in the case of the three cones, the new sets will typically be competing against
V-permissible triples with different restrictions on the volume of each set from the
restrictions at the outset.
\end{remark}

\begin{remark}[Reversal of inclusions]   \label{RevInc}
We also observe that the inclusions of types of minimizers are also reversed from what one might
assume before thinking about it.  In typical set inclusions of this sort, one assumes that more
constraints lead to a smaller set.  Here, because it is the competitors which are being constrained,
the inclusions work in reverse.  Indeed, the set of all DV-minimizers contains
both the set of V-minimizers and the set of D-minimizers insofar as if you take the DV-minimizer
where you take the Dirichlet data to be rather ``wiggly'' then you only compete against other
configurations with similarly wiggly boundary data.  Thus, you are automatically the DV-minimizer
by construction, but you are not likely to be a V-minimizer, as any V-minimizer would prefer less wiggly
boundary data.
\end{remark}

\noindent
Since we intend to study the local microstructure at triple points which are in the interior of $\Omega,$
it will be useful to study the simplified energy functional which ignores the wetting energy and the potential
energy.  By scaling in toward a triple point, we can be sure that the forces of surface tension are much stronger than
the gravitational forces in our local picture, and at the same time the wetting energy will become totally irrelevant, 
as the boundary of $\Omega$ can be scaled away altogether if we zoom in far enough.  So, with these ideas in mind 
we define the simplified energy functional by:
\begin{equation}
  \mathcal{F}_S(\{E_j\})
    := \sum_{j = 0}^2 \left( \alpha_j \int_{\Omega} |D\chisub{E_j}| \right) , \ \ \text{and} \ \
  \mathcal{F}_S(\{h_j\})
    := \sum_{j = 0}^2 \left( \alpha_j \int_{\Omega} |Dh_j| \right) \;.
  \label{eq:Energy}
\end{equation}

\noindent
The energy on $\Omega^{\prime} \subset \subset \Omega$ is:
\begin{equation}
  \mathcal{F}_S(\{E_j\}, \Omega^{\prime})
    := \sum_{j = 0}^2 \left( \alpha_j \int_{\Omega^{\prime}} |D\chisub{E_j}| \right) \;, \ \ \text{and} \ \
  \mathcal{F}_S(\{h_j\}, \Omega^{\prime})
    := \sum_{j = 0}^2 \left( \alpha_j \int_{\Omega^{\prime}} |Dh_j| \right) \;.
  \label{eq:opEnergy}
\end{equation}

\noindent
Let $\Omega^{\prime} \subset \subset \Omega,$ let $\{E_j\}$ be permissible.  Using ``spt'' for ``support'', we define
\begin{equation}
   \newnu(\{E_j\}, \Omega^{\prime}) := \inf \{ \mathcal{F}_{S}(\{\tilde E_j\}) \; : \; 
       \text{spt}(\chisub{E_j} - \chisub{\tilde E_j}) \subset \Omega^{\prime}  \text{ and } \ \{\tilde E_j\}  \text{ is perm.} \}
\label{eq:nudef}
\end{equation}
\begin{equation}
   \Psi(\{E_j\}, \Omega^{\prime}) := \mathcal{F}_{S}(\{E_j\}, \Omega^{\prime}) - \newnu(\{E_j\}, \Omega^{\prime}) \;.
\label{eq:psidef}
\end{equation}
Now assume further that $\{E_j\}$ is V-permissible.  Then we define
\begin{equation}
   \newnu_V(\{E_j\}, \Omega^{\prime}) := \inf \{ \mathcal{F}_{S}(\{\tilde E_j\}) \; : \; 
       \text{spt}(\chisub{E_j} - \chisub{\tilde E_j}) \subset \Omega^{\prime}  \text{ and } \ \{\tilde E_j\}  \text{ is V-perm.} \}
\label{eq:nuVdef}
\end{equation}
\begin{equation}
   \Psi_V(\{E_j\}, \Omega^{\prime}) := \mathcal{F}_{S}(\{E_j\}, \Omega^{\prime}) - \newnu_V(\{E_j\}, \Omega^{\prime}) \;.
\label{eq:psiVdef}
\end{equation}
So $\newnu$ and $\newnu_V$ give the value of the minimal energy configuration with the same boundary
data, while $\Psi$ and $\Psi_V$ give
the amount that $\{E_j\}$ deviates from minimal.  Notice that we are minimizing over the class of sets of finite
perimeter, not over all of BV.

Of course the existence theorem does not address any of the regularity questions near a triple point
and the regularity questions near the boundary of only two of the fluids is already well-understood.
On the other hand, in order to understand the microstructure of triple points which are not located on the boundary of 
$\Omega$ it should suffice to study minimizers of the simplified energy functional, $\mathcal{F}_{S},$ as we have
described above.  We make this heuristic argument rigorous in Section~\ref{RR}, but we still need two more
tools from the background literature.

The first tool we need is a very nice observation due to F.~Almgren which allowed him to virtually ignore volume constraints
when studying the regularity of minimizers of surface area under these restrictions.  Since our energy is bounded
from above and below by a constant times surface area, we can adapt his result to our situation immediately.
\begin{lemma}[Almgren's Volume Adjustment Lemma]   \label{PfVA}
Given any permissible triple $\{ E_j \},$ there exists a $C > 0,$ such that very small volume adjustments can be
made at a cost to the energy which is not more than $C$ times the volume adjustment.  Stated quantitatively:
\begin{equation}
     \Delta \mathcal{F}_{S} \leq C \sum_{j = 0}^2 |\Delta V_j|
\label{eq:val}
\end{equation}
where $\Delta V_j$ is the volume change of $E_j.$
\end{lemma}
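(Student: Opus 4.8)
The plan is to adapt Almgren's device of ``volume-fixing variations.'' Scooping out a small ball of one fluid and filling it with another would change the volumes by $\pm|\Delta V|$ but the perimeter by an amount of order $|\Delta V|^{(n-1)/n}$, which is the wrong power of $\Delta V$. Instead one transports volume across the interfaces that are already present, by means of the flow of a smooth, compactly supported vector field: sliding an interface of finite $\mathcal{H}^{n-1}$-measure a normal distance $\varepsilon$ changes both the enclosed volumes and the weighted perimeter $\mathcal{F}_S$ by $O(\varepsilon)$, so both are linear in the small parameter, which is precisely the asserted estimate.

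Because $|E_0|+|E_1|+|E_2|=|\Omega|$ is fixed, an admissible volume change $(\Delta V_0,\Delta V_1,\Delta V_2)$ lies in the two-dimensional plane $\sum_j\Delta V_j=0$, so two vector fields are needed. The step that does the real work is the non-degeneracy fact that the linear map
\[
   C^{\infty}_{c}(\Omega;\R^n)\ \ni\ X\ \longmapsto\ \Big(\int_{E_0}\text{div}\,X,\ \int_{E_1}\text{div}\,X\Big)\ \in\ \R^2
\]
is onto whenever $|E_0|,|E_1|,|E_2|>0$: if a nonzero $(\lambda,\mu)$ annihilated the image, then $\int_\Omega(\lambda\chisub{E_0}+\mu\chisub{E_1})\,\text{div}\,X=0$ for every test field, so $\lambda\chisub{E_0}+\mu\chisub{E_1}$ would be a.e.\ constant on the connected set $\Omega$; but it equals $\lambda$ on $E_0$, $\mu$ on $E_1$ and $0$ on $E_2$, three sets of positive measure, which forces $\lambda=\mu=0$. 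This is the only place where the three fluids must genuinely all be present, and it is sharp --- if, say, $|E_2|=0$ one cannot enlarge $E_2$ at linear cost --- so the statement is to be read for triples with $|E_j|>0$ for every $j$, the case relevant to minimizers near a triple point. Granting surjectivity, I would fix $X^{(1)},X^{(2)}\in C^{\infty}_{c}(\Omega;\R^n)$ for which $\big(\int_{E_0}\text{div}\,X^{(a)},\ \int_{E_1}\text{div}\,X^{(a)}\big)$ equals $(1,0)$ and $(0,1)$, respectively.

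Next I would run the flows. Let $\phi^{(a)}_\tau$ be the flow of $X^{(a)}$, put $T_{s,t}:=\phi^{(1)}_s\circ\phi^{(2)}_t$ --- a diffeomorphism of $\Omega$ equal to the identity off a fixed compact set --- and set $\tilde E_j:=T_{s,t}(E_j)$. Then $\{\tilde E_j\}$ is again permissible and keeps the same trace on $\partial\Omega$, and the volume map
\[
   F(s,t)\ :=\ \big(\,|\tilde E_0|-|E_0|,\ |\tilde E_1|-|E_1|\,\big)\ =\ \Big(\int_{E_0}(\det DT_{s,t}-1),\ \int_{E_1}(\det DT_{s,t}-1)\Big)
\]
is smooth in $(s,t)$, vanishes at the origin, and has $DF(0,0)=\mathrm{Id}$ by the choice of $X^{(1)},X^{(2)}$. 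The inverse function theorem then yields, for every sufficiently small $(\Delta V_0,\Delta V_1)$, a pair $(s,t)$ with $F(s,t)=(\Delta V_0,\Delta V_1)$ and $|s|+|t|\le C_1\sum_j|\Delta V_j|$; and since $|\tilde E_2|=|E_2|-\Delta V_0-\Delta V_1$, all three prescribed volume changes are realized at once.

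It remains to bound the energy. As $T_{s,t}$ is a diffeomorphism, the transformation rule for perimeters gives $\int_\Omega|D\chisub{\tilde E_j}|=\int_{\partial^*E_j}|(\mathrm{cof}\,DT_{s,t})\,\nu_{E_j}|\,d\mathcal{H}^{n-1}$ over the reduced boundary $\partial^*E_j$, and because $\mathrm{cof}\,DT_{s,t}=\mathrm{Id}+O(|s|+|t|)$ with constant controlled by $\|X^{(1)}\|_{C^1}+\|X^{(2)}\|_{C^1}$, one gets $\big|\int_\Omega|D\chisub{\tilde E_j}|-\int_\Omega|D\chisub{E_j}|\big|\le C_2(|s|+|t|)\int_\Omega|D\chisub{E_j}|$. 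Multiplying by $\alpha_j$, summing over $j$, and inserting the bound on $|s|+|t|$,
\[
   \Delta\mathcal{F}_S\ \le\ C_2\Big(\max_j\alpha_j\Big)\Big(\sum_{j=0}^2\int_\Omega|D\chisub{E_j}|\Big)(|s|+|t|)\ \le\ C\sum_{j=0}^2|\Delta V_j|,
\]
with $C$ depending only on the total perimeter of $\{E_j\}$ and on the fixed fields. I expect the non-degeneracy in the second paragraph to be the only real obstacle; once it is in hand, the rest is the inverse function theorem and a routine first-variation estimate. Finally, if one also wants the whole adjustment confined to a prescribed small ball --- as when repairing the volumes of a competitor far from where it has been modified --- the same argument applies once that ball is chosen to meet two distinct interfaces $\partial^*E_i\cap\partial^*E_j$ in positive $\mathcal{H}^{n-1}$-measure (such points are abundant, since $\mathcal{H}^{n-1}$-a.e.\ point of $\partial^*E_j$ is a two-fluid point), or else one uses two small balls, one for each of two distinct interfaces.
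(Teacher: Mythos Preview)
The paper does not supply its own proof of this lemma; it simply cites Almgren \cite{A1976} (V1.2(3)) and Morgan \cite{Mo1994} (Lemma~2.2). Your argument is precisely the standard Almgren ``volume-fixing variations'' device that appears in those references: construct two compactly supported vector fields whose divergence integrals over the $E_j$ span the admissible volume directions, run their flows, invert via the implicit/inverse function theorem, and control the perimeter change linearly through the cofactor formula. The sketch is correct and matches the cited sources in both strategy and substance.

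One small remark: your surjectivity argument tacitly assumes $\Omega$ is connected and all three $|E_j|>0$, which you note explicitly; the paper's applications are always in this regime (near a triple point, or after the Elimination Theorem has been invoked), so the restriction is harmless. Your closing comment about localizing the adjustment to a small ball meeting two interfaces is exactly how the lemma is used in Sections~\ref{MSE1} and~\ref{MC}.
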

This result can be found in \cite{Am1976} (see V1.2(3)) or in \cite{Mo1994} as Lemma 2.2.

The next tool we need is an ``elimination theorem'' which in our setting is due to Leonardi.  (See Theorem 3.1 of \cite{L2001}.)

\begin{theorem}[Leonardi's Elimination Theorem]  \label{LET}
Under the assumptions above, including the strict triangle inequality (Equation\refeqn{strtri}\!\!), if
$\{ E_j \}$ is a V-minimizer, then $\{ E_j \}$ has the elimination property.  Namely, there exists a constant
$\eta > 0,$ and a radius $r_0$ such that if $0 < \rho < r_0,$ $B_{r_0} \subset \Omega,$ and
\begin{equation}
|E_i \cap B_\rho(x)| \leq \eta \rho^n \, ,
\end{equation}
then
\begin{equation}
|E_i \cap B_{\rho/2}(x)| = 0 \;.
\end{equation}
\end{theorem}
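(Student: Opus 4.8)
The plan is to run a De~Giorgi-type iteration for the non-decreasing volume function
$$V(r) := |E_i \cap B_r(x)|, \qquad \tfrac{\rho}{2} \le r \le \rho .$$
By the coarea formula $V$ is absolutely continuous with $V'(r) = \mathcal{H}^{n-1}(E_i \cap \partial B_r(x))$ for a.e.\ $r$, and by the two-sided trace theorem (Theorem~\ref{tst}, in particular Equation~\ref{eq:goodrho}) for a.e.\ $r$ the sphere $\partial B_r(x)$ is a ``good radius'': $\int_{\partial B_r(x)}|D\chisub{E_j}| = 0$ for every $j$, so the inner and outer traces of each $\chisub{E_j}$ agree there and no interface lies on the sphere. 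The target is the differential inequality
$$V(r)^{\frac{n-1}{n}} \;\le\; C\, V'(r) \qquad \text{for a.e.\ } r \in (\tfrac{\rho}{2},\rho),$$
valid once $r_0$ is small enough that $V(\rho)\le\eta\rho^n$ keeps $V$ small on $[\rho/2,\rho]$. Granting it, put $W := V^{1/n}$, so $W' = \tfrac1n V^{(1-n)/n} V' \ge \tfrac{1}{nC}$ wherever $V>0$; if $V(\rho/2)>0$ then $V\ge V(\rho/2)>0$ on all of $[\rho/2,\rho]$ and integrating gives $V(\rho)^{1/n} \ge W(\rho)-W(\rho/2) \ge \tfrac{\rho}{2nC}$, hence $V(\rho)\ge (2nC)^{-n}\rho^n$; choosing $\eta<(2nC)^{-n}$ contradicts the hypothesis and forces $V(\rho/2)=0$, which is the elimination property.

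The heart of the matter is the differential inequality, and it is here that V-minimality and the strict triangularity condition ($\alpha_j>0$ for all $j$; see Equation~\ref{eq:strtri}) come in. Fix a good radius $s\in(\rho/2,\rho)$, an index $k\ne i$, and let $\ell$ denote the third index. I would test minimality against the ``surgery'' competitor $\tilde E_i := E_i\setminus B_s(x)$, $\tilde E_k := E_k\cup(E_i\cap B_s(x))$, $\tilde E_\ell := E_\ell$, a permissible finite-perimeter configuration differing from $\{E_j\}$ only inside $\overline{B_s(x)}$. Restoring the prescribed volumes $v_i,v_k$ by Almgren's Volume Adjustment Lemma (Lemma~\ref{PfVA}) in a region disjoint from $\overline{B_s(x)}$ costs at most $C\,V(s)$ and yields a genuine V-competitor. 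Comparing energies on a slightly larger good ball $B_{s'}(x)$ and cancelling the interfaces untouched by the surgery --- inside $B_{s'}(x)$ the surgery erases $\Sigma_{ik}$ and $\Sigma_{i\ell}$, re-labels the latter as part of the $E_k\,|\,E_\ell$ interface (weight $\sigma_{k\ell}$), and creates a ``cap'' $E_i^{(1)}\cap\partial B_s(x)$ between $E_i$ and $E_k$ (weight $\sigma_{ik}$) --- V-minimality of $\mathcal{F}_S$ reduces to
$$\sigma_{ik}\,\mathcal{H}^{n-1}(\Sigma_{ik}) + \sigma_{i\ell}\,\mathcal{H}^{n-1}(\Sigma_{i\ell}) \;\le\; \sigma_{k\ell}\,\mathcal{H}^{n-1}(\Sigma_{i\ell}) + \sigma_{ik}\,V'(s) + C\,V(s),$$
where $\Sigma_{ab} := \partial^{*}E_a\cap\partial^{*}E_b\cap B_s(x)$ and $\sigma_{ab} := \alpha_a+\alpha_b$.

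Writing $a := \mathcal{H}^{n-1}(\Sigma_{ik})$ and $b := \mathcal{H}^{n-1}(\Sigma_{i\ell})$ and using $\sigma_{i\ell}-\sigma_{k\ell}=\alpha_i-\alpha_k$, this reads $(\alpha_i+\alpha_k)\,a \le (\alpha_k-\alpha_i)\,b + C(V'(s)+V(s))$, and running the same surgery with $k$ and $\ell$ interchanged gives the companion inequality $(\alpha_i+\alpha_\ell)\,b \le (\alpha_\ell-\alpha_i)\,a + C(V'(s)+V(s))$. If $\alpha_i$ is not the smallest of the three then one of the two right-hand coefficients is $\le 0$ and that inequality already delivers $a\le C(V'(s)+V(s))$ or $b\le C(V'(s)+V(s))$; in the remaining case $\alpha_i=\min_j\alpha_j$ one has $0<\tfrac{\alpha_k-\alpha_i}{\alpha_k+\alpha_i}<1$ and $0<\tfrac{\alpha_\ell-\alpha_i}{\alpha_\ell+\alpha_i}<1$ precisely because $\alpha_i>0$, so substituting one inequality into the other yields a coefficient strictly below $1$ that can be absorbed, giving $a+b\le C(V'(s)+V(s))$, i.e.\ the relative perimeter bound $\int_{B_s(x)}|D\chisub{E_i}|\le C(V'(s)+V(s))$. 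Since for a good radius the essential boundary of $E_i\cap B_s(x)$ is $\Sigma_{ik}\cup\Sigma_{i\ell}$ together with the cap, the isoperimetric inequality gives $V(s)^{(n-1)/n} \le C\big(a+b+V'(s)\big) \le C(V'(s)+V(s))$, and since $V(s)$ is small we have $V(s)\le V(s)^{(n-1)/n}$, so the last term absorbs into the left-hand side and the desired differential inequality follows for a.e.\ $s\in(\rho/2,\rho)$.

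The step I expect to be the main obstacle is precisely the energy bookkeeping for the surgery when $E_i$ is the ``cheapest'' phase, $\alpha_i=\min_j\alpha_j$: then reassigning $E_i\cap B_s(x)$ to a single neighbour can create an interface whose weight exceeds what is released, so one competitor is not enough and one must play the two competitors against each other and use strict triangularity quantitatively to obtain a genuinely absorbable constant. Secondary care is needed in two places: choosing good radii via Theorem~\ref{tst} so the surgery leaves no spurious interface on $\partial B_s(x)$ and the coarea identity for $V'$ holds, and placing Almgren's volume correction away from $\overline{B_s(x)}$ so it contributes only the linear error $C\,V(s)$ without spoiling the localization. The remaining ingredients --- the coarea formula, the relative isoperimetric inequality, and the final ODE integration --- are routine.
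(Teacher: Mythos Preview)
The paper does not prove this theorem at all: it is stated as a background tool and attributed to Leonardi \cite{L2001} (with an alternative proof referenced in Maggi \cite{Mag}), so there is no in-paper argument to compare against. Your proposal is the standard De~Giorgi--Leonardi iteration and is essentially correct; the surgery bookkeeping and the two-competitor trick exploiting $\alpha_i>0$ are exactly how strict triangularity enters in Leonardi's proof.

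Two small points are worth tightening. First, in the ``easy'' case where $\alpha_i$ is not the smallest weight, you say one inequality yields a bound on $a$ \emph{or} $b$; in fact if, say, $\alpha_i>\alpha_k$, then in its unrearranged form $(\alpha_i+\alpha_k)a+(\alpha_i-\alpha_k)b\le C(V'+V)$ has both coefficients positive and bounds $a$ and $b$ simultaneously, which is what you need. Second, the absorption step ``$V(s)\le V(s)^{(n-1)/n}$ so the last term absorbs'' only works as written if the constant in front of $V(s)$ is below $1$, which you have not arranged. The clean fix is to write $V(s)=V(s)^{1/n}V(s)^{(n-1)/n}\le (\eta\rho^n)^{1/n}V(s)^{(n-1)/n}$ and then choose $\eta$ (and/or $r_0$) small enough to make the resulting coefficient less than $\tfrac12$; this is where the smallness hypothesis on $\eta$ is actually used.
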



\section{Restrictions and Rescalings}    \label{RR}

We start with a rather trivial observation:  If $\{ E_j \}$ is a V-minimizer of $\mathcal{F}_{SWP}$ among
V-permissible triples, and $B_r \subset \subset \Omega,$ then the triple:  $\{ E_j \cap B_r\}$
DV-minimizes $\mathcal{F}_{SP}$ in $B_r$ among V-permissible triples with Dirichlet data given by the traces of
the $\{ E_j \}$ on the outer boundary of $B_r,$ and whose volumes are prescribed to be the volume of each
$E_j$ intersected with $B_r.$  If this statement were false, then we would immediately get an improvement to
our V-minimizer of $\mathcal{F}_{SWP},$ by replacing things within $B_r.$

Recalling that $B_1 \subset \subset \Omega,$ we wish to define rescalings of our triples and study their properties
in the hopes of producing blowup limits.  For $\lambda \in \R^{+}$ we define $\lambda E_j$ to be the dilation of
$E_j$ by $\lambda.$  In particular,
$$x \in \lambda E_j \ \Longleftrightarrow \ \frac{x}{\lambda} \in E_j \,.$$
Now assume that $\{ E_j \}$ is a D-minimizer of $\mathcal{F}_{SWP}$ in $\Omega,$ and fix $0 < \lambda < 1.$
By virtue of the fact that $\{ E_j \}$ is a D-minimizer of $\mathcal{F}_{SP}$ in $B_\lambda,$ we can scale
our triple $\{ E_j \}$ to the triple $\{ \lambda^{-1} E_j \},$ and easily verify that the new triple is a D-minimizer of
the functional:

\begin{equation}
  \mathcal{F}_{SP\lambda}(\{A_j\},B_1) 
    := \sum_{j = 0}^2 \left( \alpha_j \int_{B_1} |D\chisub{A_j}|
                           + \lambda \rho_j g \int_{A_j \cap B_1} z \; dV \right).
  \label{eq:rescDEF}
\end{equation}
From here, after observing that it is immediate that the characteristic functions corresponding to the triple
$\{ \lambda^{-1} E_j \}$ will be uniformly bounded in $BV(B_1),$ we can apply Helly's selection theorem
(given above as Theorem\refthm{Helly}\!\!) to guarantee
the existence of a blow up limit in $BV.$  More importantly, the blowup limit will be a minimizer of
$\mathcal{F}_{S}.$  For convenience, define $\chisub{E_{j,\lambda_i}} := \chisub{\lambda^{-1}_iE_j}$.

\begin{theorem}[Existence of blowup limits]  \label{EBLOW}
Assume that $\{ E_j \}$ is a D-minimizer or a V-minimizer of $\mathcal{F}_{SP}$ in $\Omega.$
In either case, there exists a configuration (which we
will denote by $\{ E_{j, 0} \}$) and a sequence of $\lambda_i \downarrow 0$ such that for each $j:$
\begin{equation}
    ||\chisub{E_{j,\lambda_i}} - \chisub{E_{j, 0}}||_{L^1(B_1)} \rightarrow 0 \ \ \ \text{and} \ \ \
      D\chisub{E_{j,\lambda_i}} \stackrel{\ast}{\rightharpoonup}   D\chisub{E_{j, 0}} \;.
\label{eq:BVcomp}
\end{equation}
Furthermore, the triple $\{ E_{j, 0} \}$ is a D-minimizer of $\mathcal{F}_{S}$ for whatever Dirichlet data it
has in the first case or a V-minimizer of $\mathcal{F}_{S}$ for whatever volume constraints it satisfies in the
second case.
\end{theorem}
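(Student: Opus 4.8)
The plan is to argue in two stages. The $L^1$ and weak‑$*$ convergences in~(\ref{eq:BVcomp}), together with the existence of $\{E_{j,0}\}$, form a compactness statement; the minimality of $\{E_{j,0}\}$ for $\mathcal F_S$ is a ``closedness of minimizers under $BV$ convergence'' statement which I would prove by transplanting competitors down the sequence. One simplification to exploit throughout: the gravitational term of the rescaled functional $\mathcal F_{SP\lambda}$ in~(\ref{eq:rescDEF}) carries a coefficient proportional to $\lambda$, and its total contribution over $B_1$ is bounded by a constant times $\lambda$ (because $|z|\le 1$ on $B_1$); hence it disappears in every limit and only the surface energy $\mathcal F_S$ needs to be tracked.

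\emph{Compactness.} First I would establish a uniform density bound $\sum_j\alpha_j\int_{B_r(0)}|D\chisub{E_j}|\le Cr^{n-1}$ for all small $r$ with $\overline{B_r(0)}\subset\Omega$. Picking a good radius $r$ (Theorem~\ref{tst}), one compares the minimizer $\{E_j\}$ of $\mathcal F_{SP}$ with the configuration obtained by filling $B_r$ with a single fluid; this creates an interface only on $\partial B_r$, of $\mathcal H^{n-1}$‑measure $O(r^{n-1})$, and changes the gravitational term by $O(r^n)$, while in the volume‑constrained case one restores the prescribed volumes through Almgren's Volume Adjustment Lemma (Lemma~\ref{PfVA}) at cost $O(r^n)$, using an adjustment supported away from $B_r$. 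Since perimeter scales like $\lambda^{1-n}$, taking $r=\lambda_i$ turns this into $\int_{B_1}|D\chisub{E_{j,\lambda_i}}|\le C$ uniformly in $i$, and $\|\chisub{E_{j,\lambda_i}}\|_{L^1(B_1)}\le|B_1|$ trivially. Helly's Selection Theorem (Theorem~\ref{Helly}), applied in turn for $j=0,1,2$, then gives a single subsequence $\lambda_i\downarrow 0$ along which $\chisub{E_{j,\lambda_i}}\to\chisub{E_{j,0}}$ in $L^1(B_1)$ with bounded variation in the limit; passing to a further a.e.-convergent subsequence shows the limits are $\{0,1\}$‑valued, the pointwise relations $\chisub{E_j}\chisub{E_k}=0$ and $\sum_j\chisub{E_j}=1$ pass to the limit so $\{E_{j,0}\}$ is again permissible (with volumes $\lim_i|\lambda_i^{-1}E_j\cap B_1|$), and $L^1$ convergence with the uniform variation bound upgrades at once to $D\chisub{E_{j,\lambda_i}}\stackrel{\ast}{\rightharpoonup}D\chisub{E_{j,0}}$.

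\emph{Minimality of the limit.} Let $\{\tilde E_{j,0}\}$ be an arbitrary competitor in the sense of~(\ref{eq:nudef})--(\ref{eq:psidef}) (or the $V$‑analogues); since any admissible modification of $\{E_{j,0}\}$ is supported in some good ball, it suffices to take $\text{spt}(\chisub{E_{j,0}}-\chisub{\tilde E_{j,0}})\subset B_{\rho_0}$ for a good radius $\rho_0<1$. Fix good radii $\rho_0<\rho'<\rho<1$ and transplant: define $\{\tilde E_{j,\lambda_i}\}$ to equal $\{\tilde E_{j,0}\}$ inside a ball $B_{t_i}$ and $\{E_{j,\lambda_i}\}$ outside it, where $t_i\in(\rho',\rho)$ is a good radius chosen, via a Fubini/coarea argument (in the spirit of Theorem~\ref{thm:giusti2.4}) and using $\chisub{\tilde E_{j,0}}=\chisub{E_{j,0}}$ on $B_\rho\setminus B_{\rho'}$ together with the $L^1$ convergence, so that the mismatch set on $\partial B_{t_i}$ has $\mathcal H^{n-1}$‑measure bounded by $\frac{2}{\rho-\rho'}\sum_j\|\chisub{E_{j,0}}-\chisub{E_{j,\lambda_i}}\|_{L^1(B_\rho)}\to 0$. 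In the volume‑constrained case the transplant perturbs the volumes by only $o(1)$ (because $\{\tilde E_{j,0}\}$ and $\{E_{j,0}\}$ have the same volumes and agree outside $B_{\rho_0}$), which I would correct by Lemma~\ref{PfVA} at cost $o(1)$ inside a fixed ball in $B_1\setminus\overline{B_\rho}$. Now feed $\{\tilde E_{j,\lambda_i}\}$ into the minimality of $\{E_{j,\lambda_i}\}$ for $\mathcal F_{SP\lambda_i}$, split both energies across the good sphere $\partial B_{t_i}$, and cancel the common annular term; this leaves $\mathcal F_{SP\lambda_i}(\{E_{j,\lambda_i}\},B_{t_i})\le\mathcal F_{SP\lambda_i}(\{\tilde E_{j,0}\},B_{t_i})+o(1)$. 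Discarding the nonnegative surface energy on $B_{t_i}\setminus\overline{B_{\rho'}}$ on the left, bounding the $O(\lambda_i)$ gravitational terms on both sides, and letting $i\to\infty$ with the lower semicontinuity of $\mathcal F_S$ (see~\cite{Mas} and~(\ref{eq:HellConv2})) on the open ball $B_{\rho'}$, one obtains $\mathcal F_S(\{E_{j,0}\},B_{\rho'})\le\mathcal F_S(\{\tilde E_{j,0}\},B_\rho)$; letting $\rho'\uparrow\rho$ through good radii and using that the two configurations agree on $B_1\setminus\overline{B_\rho}$ upgrades this to $\mathcal F_S(\{E_{j,0}\},B_1)\le\mathcal F_S(\{\tilde E_{j,0}\},B_1)$. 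As $\{\tilde E_{j,0}\}$ was arbitrary, this is exactly $D$‑minimality of $\{E_{j,0}\}$ for $\mathcal F_S$ in the first case; in the second case every competitor used was $V$‑permissible, so it is $V$‑minimality.

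I expect the transplantation step to be the main obstacle. Compactness is routine once the perimeter density estimate is in hand, but showing that the energies of the transplanted competitors do not overshoot $\mathcal F_S(\{\tilde E_{j,0}\},B_\rho)$ in the limit is the delicate point: it requires controlling the interface created at the seam $\partial B_{t_i}$, which is exactly what the good‑radius trace results of Section~\ref{BBV} (Theorems~\ref{tst} and~\ref{thm:giusti2.4}) and the Fubini selection of $t_i$ accomplish, and in the volume‑constrained case it requires absorbing an $o(1)$ volume error through Lemma~\ref{PfVA} without disturbing the cancellation of the annular term.
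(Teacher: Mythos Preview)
Your proposal is correct, and in fact more careful than the paper's own argument. The paper proceeds by a short direct comparison: from lower semicontinuity it gets $\mathcal{F}_S(\{E_{j,0}\}) \le \liminf \mathcal{F}_S(\{E_{j,\lambda_i}\})$, and then from minimality of $\{E_{j,\lambda_i}\}$ for $\mathcal{F}_{SP\lambda_i}$ it writes $\mathcal{F}_{SP\lambda_i}(\{E_{j,\lambda_i}\}) \le \mathcal{F}_{SP\lambda_i}(\{E_{j,0}\})$ directly (and similarly against any hypothetical improvement $\{\tilde E_{j,0}\}$), concluding by contradiction. This is quick but it silently assumes that $\{E_{j,0}\}$ and $\{\tilde E_{j,0}\}$ are admissible competitors for the problem at scale $\lambda_i$; in the $D$-case this leans on the two-sided-trace interpretation of Dirichlet data (any configuration is admissible and simply pays a boundary penalty), and in the $V$-case it tacitly uses that the volumes of $\{E_{j,\lambda_i}\}$ converge to those of $\{E_{j,0}\}$ together with an unspoken volume adjustment.

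Your transplantation argument avoids this altogether: by gluing the competitor $\{\tilde E_{j,0}\}$ to $\{E_{j,\lambda_i}\}$ across a well-chosen sphere $\partial B_{t_i}$ you produce a genuine competitor at each scale, control the seam via the trace tools of Section~\ref{BBV}, absorb the $o(1)$ volume defect through Lemma~\ref{PfVA}, and pass to the limit by lower semicontinuity. This is exactly the mechanism the paper itself invokes later in Lemma~\ref{lemma:MC} (the analogue of Giusti's Lemma~9.1), so you are in effect front-loading that machinery. What you gain is a self-contained argument that does not rely on the soft Dirichlet interpretation; what the paper's route gains is brevity, at the cost of leaving the admissibility of the limit configuration as a competitor implicit.
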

\begin{proof}
Based on the discussion preceding the statement of the theorem, it remains to show that $\{ E_{j, 0} \}$ is a
minimizer of $\mathcal{F}_{S}$ under the appropriate constraints.  Lower semicontinuity of the BV norm implies that
$$\mathcal{F}_{S}(\{ E_{j, 0} \}) \leq \liminf_{j \rightarrow \infty} \mathcal{F}_{S}(\{ E_{j, \lambda_i} \}).$$
While on the other hand
\begin{equation}
     \mathcal{F}_{SP\lambda_i}(\{ E_{j, \lambda_i} \})
             = \min \{ \mathcal{F}_{SP\lambda_i}(\{ A_j \}) \, : \, \{ A_j \} \, \text{ is permissible} \}
                                                                                       \leq \mathcal{F}_{SP\lambda_i}(\{ E_{j, 0} \})
\label{eq:minalongway}
\end{equation}
since $\{ E_{j, \lambda_i} \}$ is a minimizer.  Because the gravitational term is going to zero, it is clear that
\begin{equation}
    \mathcal{F}_{S}(\{ E_{j, 0} \}) = \lim_{i \rightarrow \infty} \mathcal{F}_{S}(\{ E_{j, \lambda_i} \}) \;,
\label{eq:limitswork}
\end{equation}
and for the same reason, for any $\epsilon > 0,$ if $\lambda$ is sufficiently small and $i$ is sufficiently large,
then we must have:
\begin{equation}
      |\mathcal{F}_{SP\lambda}(\{ E_{j, 0} \}) - \mathcal{F}_{SP\lambda}(\{ E_{j, \lambda_i} \})| < \epsilon \,.
\label{eq:gravsmall}
\end{equation}

Now if $\{ E_{j, 0} \}$ is not a D-minimizer or V-minimizer (according to the case we are in),
then there exists a D or V-minimizing triple
$\{ \tilde{E}_{j, 0} \}$ and a $\gamma > 0,$ such that
$\mathcal{F}_{S}(\{ E_{j, 0} \}) - \gamma = \mathcal{F}_{S}(\{ \tilde{E}_{j, 0} \}).$
In this case, for all sufficiently small $\lambda,$ we will automatically have
\begin{equation}
\mathcal{F}_{SP\lambda}(\{ E_{j, 0} \}) - \gamma/2 \geq \mathcal{F}_{SP\lambda}(\{ \tilde{E}_{j, 0} \}),
\end{equation}
but then by using Equations\refeqn{minalongway}and\refeqn{gravsmall}we will get a contradiction by observing
that for small enough $\lambda_i$ we will have:
\begin{equation}
\mathcal{F}_{SP\lambda_i}(\{ \tilde{E}_{j, 0} \}) < \mathcal{F}_{SP\lambda_i}(\{ E_{j, \lambda_i} \}) \,.
\end{equation}
\end{proof}


\section{The Monotonicity of Scaled Energy (Part I)}
\label{MSE1}

\begin{theorem}
Suppose $\{E_j\}\in BV(B_R)$ is V-permissible and $0<\rho<r<R$ with $0\in\cap_{j=0}^2 \partial E_j$.
Then there exists a constant $C$ such that
\begin{eqnarray}
&& \sum^2_{j=0}\alpha_j\left\{ \int_{B_r\setminus B_\rho}
    \left| \left\langle \frac{x}{|x|^n},D\chi_{E_j}\right\rangle \right|\, dx \right\}^2 \nonumber\\
&\leq&
2\sum^2_{j=0}\int_{B_r\setminus B_\rho} |x|^{1-n}|D\chi_{E_j}|\, dx \cdot \nonumber\\
&& \quad
\cdot \Bigg\{ r^{1-n}\mathcal{F}_S\left(\{E_j\},B_r\right)
        - \rho^{1-n}\mathcal{F}_S\left(\{E_j\},B_\rho\right)
       + (n-1)\int^r_\rho t^{-n}\Psi_V \left(\{E_j\},B_t\right)\, dt \nonumber\\
&&  \left.\qquad -\sum_{j = 0}^2\frac{\alpha_j}{8}\int_{B_r\setminus B_\rho} |x|^{1-n}  \left\langle \frac{x}{|x|}, \frac{D\chi_{E_j}}{|D\chi_{E_j}|} \right\rangle^4 |D\chi_{E_j}|\, dx + C(r - \rho) \right\} .
\label{eqn:full_mono}
\end{eqnarray}
\end{theorem}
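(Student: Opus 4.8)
\medskip\noindent\textbf{Proof plan.} The plan is to run the classical Giusti cone‑comparison derivation of the monotonicity of scaled perimeter, carried out simultaneously for the three weighted terms $\alpha_j\int|D\chi_{E_j}|$, with the failure of minimality absorbed into $\Psi_V$ and the volume constraint handled by Almgren's Lemma~\ref{PfVA}. Write $\nu_j:=D\chi_{E_j}/|D\chi_{E_j}|$ for the generalized unit normal of $E_j$ and put $\cos\theta_j(x):=\langle x/|x|,\nu_j\rangle$, so that the component of $\nu_j$ tangent to the sphere $\partial B_{|x|}$ has length $\sin\theta_j\ge 0$. By Theorem~\ref{tst} we may restrict to the ``good'' radii $t$, namely those with $\int_{\partial B_t}|D\chi_{E_j}|=0$ for every $j$; these are of full measure in $(0,R)$, and at such $t$ the inner and outer traces of each $\chi_{E_j}$ on $\partial B_t$ agree and still sum to $1$. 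Fix such a $t$ and cone off inside $B_t$: set $\tilde\chi_j(x):=\chi_{E_j}(tx/|x|)$ on $B_t$ and $\tilde\chi_j:=\chi_{E_j}$ on $\Omega\setminus B_t$. Because the traces sum to $1$, the triple $\{\tilde E_j\}$ is permissible; it agrees with $\{E_j\}$ outside $B_t$ and, $t$ being good, adds no interface on $\partial B_t$; and, being radially constant, it carries no radial derivative, so a direct computation in polar coordinates gives $\mathcal{F}_S(\{\tilde E_j\},B_t)\le \tfrac{t}{n-1}\sum_j\alpha_j\,\mathcal{H}^{n-2}(\partial^*E_j\cap\partial B_t)$. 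Its volumes differ from those of $\{E_j\}$ in $B_t$ by at most $|B_t|$, so Lemma~\ref{PfVA} converts it into a \emph{V-permissible} competitor, still equal to $\{E_j\}$ off $B_t$, at extra cost at most $Ct^{n}$. Feeding this competitor into the definitions of $\newnu_V$ and $\Psi_V$ yields, for a.e.\ $t\in(0,R)$,
\begin{equation}
  \mathcal{F}_S(\{E_j\},B_t)\ \le\ \frac{t}{n-1}\sum_{j=0}^{2}\alpha_j\,\mathcal{H}^{n-2}(\partial^*E_j\cap\partial B_t)\ +\ \Psi_V(\{E_j\},B_t)\ +\ Ct^{n}.
  \label{eq:conecomp}
\end{equation}

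The next step is to read \eqref{eq:conecomp} as a differential inequality for $m(t):=\mathcal{F}_S(\{E_j\},B_t)$, which is nondecreasing, hence of bounded variation, in $t$. The coarea formula applied to $x\mapsto|x|$ on the reduced boundaries $\partial^*E_j$ identifies $\mathcal{H}^{n-2}(\partial^*E_j\cap\partial B_t)$ with the $dt$-density of the measure $\sin\theta_j\,|D\chi_{E_j}|$, while $m'(t)=\sum_j\alpha_j\,\tfrac{d}{dt}\!\int_{B_t}|D\chi_{E_j}|$ is the density of $\sum_j\alpha_j|D\chi_{E_j}|$; since $\sin\theta_j\le 1$, their difference is the density of $\sum_j\alpha_j(1-\sin\theta_j)|D\chi_{E_j}|$. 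Inserting \eqref{eq:conecomp} into $\tfrac{d}{dt}(t^{1-n}m(t))=t^{-n}(t m'(t)-(n-1)m(t))$ gives, for a.e.\ $t$, the lower bound $\tfrac{d}{dt}(t^{1-n}m(t))\ge t^{1-n}\sum_j\alpha_j\,[\,(1-\sin\theta_j)\text{-density}\,]-(n-1)t^{-n}\Psi_V(\{E_j\},B_t)-(n-1)C$. Integrating from $\rho$ to $r$ (the bad radii form a null set, so this is harmless) and turning the density term back into an annular integral by the same coarea identity, we arrive at
\begin{equation}
  \sum_{j}\alpha_j\!\int_{B_r\setminus B_\rho}\!\!|x|^{1-n}(1-\sin\theta_j)\,|D\chi_{E_j}|\ \le\ r^{1-n}m(r)-\rho^{1-n}m(\rho)+(n-1)\!\int_\rho^r\!t^{-n}\Psi_V(\{E_j\},B_t)\,dt+C(r^2-\rho^2).
  \label{eq:premono}
\end{equation}

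To finish, I would use the elementary inequality $1-\sin\theta=1-\sqrt{1-\cos^2\theta}\ \ge\ \tfrac12\cos^2\theta+\tfrac18\cos^4\theta$, valid for all $\theta$ because the Taylor series of $\sqrt{1-u}$ about $u=0$ has every coefficient past the constant term negative. This bounds the left side of \eqref{eq:premono} from below by $\sum_j\alpha_j\int_{B_r\setminus B_\rho}|x|^{1-n}\big(\tfrac12\langle x/|x|,\nu_j\rangle^2+\tfrac18\langle x/|x|,\nu_j\rangle^4\big)|D\chi_{E_j}|$, and transposing the quartic piece to the right of \eqref{eq:premono} produces exactly the braced expression on the right of \eqref{eqn:full_mono}; thus it remains only to show $\sum_j\alpha_j c_j\le 2\{\text{that brace}\}$, where $c_j:=\int_{B_r\setminus B_\rho}|x|^{1-n}\langle x/|x|,\nu_j\rangle^2|D\chi_{E_j}|$. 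For this, note $\langle x/|x|^n,D\chi_{E_j}\rangle=|x|^{1-n}\langle x/|x|,\nu_j\rangle\,|D\chi_{E_j}|$ and apply the Cauchy--Schwarz inequality with respect to the measure $|x|^{1-n}|D\chi_{E_j}|$ on the annulus:
\begin{equation*}
  \left\{\int_{B_r\setminus B_\rho}\Big|\Big\langle \tfrac{x}{|x|^n},D\chi_{E_j}\Big\rangle\Big|\right\}^2\ \le\ \left(\int_{B_r\setminus B_\rho}|x|^{1-n}|D\chi_{E_j}|\right) c_j .
\end{equation*}
Multiplying by $\alpha_j$, summing over $j$, and using $\int_{B_r\setminus B_\rho}|x|^{1-n}|D\chi_{E_j}|\le\sum_k\int_{B_r\setminus B_\rho}|x|^{1-n}|D\chi_{E_k}|$ for each $j$, the left side of \eqref{eqn:full_mono} is at most $\big(\sum_k\int_{B_r\setminus B_\rho}|x|^{1-n}|D\chi_{E_k}|\big)\sum_j\alpha_j c_j$, and the preceding bound controls $\sum_j\alpha_j c_j$ by twice the brace. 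This is \eqref{eqn:full_mono}.

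The main obstacle, as is usual for monotonicity formulas in this setting, is to make \eqref{eq:conecomp} and the coarea/density bookkeeping leading to \eqref{eq:premono} fully rigorous in the $BV$ framework: one must verify that the radial cone over the sphere-trace is a genuine permissible competitor (then V-permissible after Almgren) creating no perimeter on $\partial B_t$ for good $t$; that $\mathcal{H}^{n-2}(\partial^*E_j\cap\partial B_t)$ really is the $\sin\theta_j$-weighted radial density of $|D\chi_{E_j}|$ and may be integrated against $t^{1-n}$; and that the volume defect of the cone inside $B_t$ is estimated sharply enough that, after the $t^{1-n}$ rescaling and integration in $t$, the Almgren cost contributes no more than $C(r^2-\rho^2)$. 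These are exactly the points for which the trace and slicing results of Section~\ref{BBV}---Theorems~\ref{tst} and~\ref{thm:giusti2.4} and Lemma~\ref{G5.3}---were collected; by contrast the Cauchy--Schwarz step, the quartic Taylor estimate, and the algebra combining the three fluids are routine. (The hypothesis $0\in\bigcap_j\partial E_j$ is not needed for the inequality itself; it matters only for the applications, where it guarantees a nondegenerate limit for the scaled energy.)
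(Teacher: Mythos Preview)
Your proposal is correct and follows essentially the same three-step strategy as the paper: compare against the radial cone at each radius $t$ (with Almgren's lemma supplying the $Ct^{n}$ volume-correction cost), expand $\sqrt{1-u}$ to extract the $\tfrac12\cos^2\theta$ and $\tfrac18\cos^4\theta$ terms, and finish with Cauchy--Schwarz on the annulus. The only substantive difference is in the technical framework: the paper carries out the cone comparison on a smooth approximating sequence $f_j(\,\cdot\,;\epsilon)\to\chi_{E_j}$ (so that the equality $\int_{B_t}|Df_{j,t}|=\tfrac{t}{n-1}\int_{\partial B_t}|Df_j|\sqrt{1-\cos^2\theta}\,d\mathcal{H}^{n-1}$ is an honest calculus identity) and passes to the limit at the end via Theorem~\ref{DT2}, whereas you work directly at the $BV$ level using reduced-boundary slicing and coarea to identify $\mathcal{H}^{n-2}(\partial^*E_j\cap\partial B_t)$ with the radial density of $\sin\theta_j\,|D\chi_{E_j}|$. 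Your closing paragraph correctly flags exactly the points where the direct approach needs care---that the cone over the sphere-trace is a genuine permissible competitor with no extra perimeter on $\partial B_t$ at good radii, and that the derivative/coarea bookkeeping is valid a.e.---and these are precisely the technicalities that the paper's smoothing sidesteps. Either route yields the theorem; the smoothing is more self-contained, while your formulation is closer in spirit to the variational argument the paper uses in Section~\ref{MSE2}.
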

This estimate and the argument below should be compared with \cite[Lemma 5]{MT} and \cite[Chapter 5]{G}. \\
\begin{proof}
Let $t\in(0,R)$ be such that $0<\rho\leq t\leq r <R$. 
Theorem 2, p. \!\!\!\!\! 172 of \cite{EG} (or similar) implies there exist smooth
functions $f_j(x;\epsilon)$ so that if $\epsilon\rightarrow 0$, then 
$f_j(x;\epsilon) \rightarrow \chi_{E_j}(x)$ in $L^1(B_R)$ and
$$
\int_{B_R} |D\chi_{E_j}| = \lim_{\epsilon\rightarrow 0} \int_{B_R} |Df_j(x;\epsilon)|\, dx.
$$
Then define the conical projection on these smooth functions:
\begin{equation}\label{eqn:conical}
f_{j,t} = f_j(x;\epsilon,t) = \left\{  \begin{array}{ll}
           f_j(x;\epsilon) \ \ \ \ \ &|x| \geq t \\
           f_j\left(\frac{tx}{|x|};\epsilon\right) \ \ \ \ \ &|x| < t \;.
       \end{array}  \right.
\end{equation}
An example of this process can be seen in Figure~\ref{fig:fcts}.
\begin{figure}[htb]
\centering
\subfloat[Level curves for $f_j$]{
    \includegraphics[width=.48\textwidth]{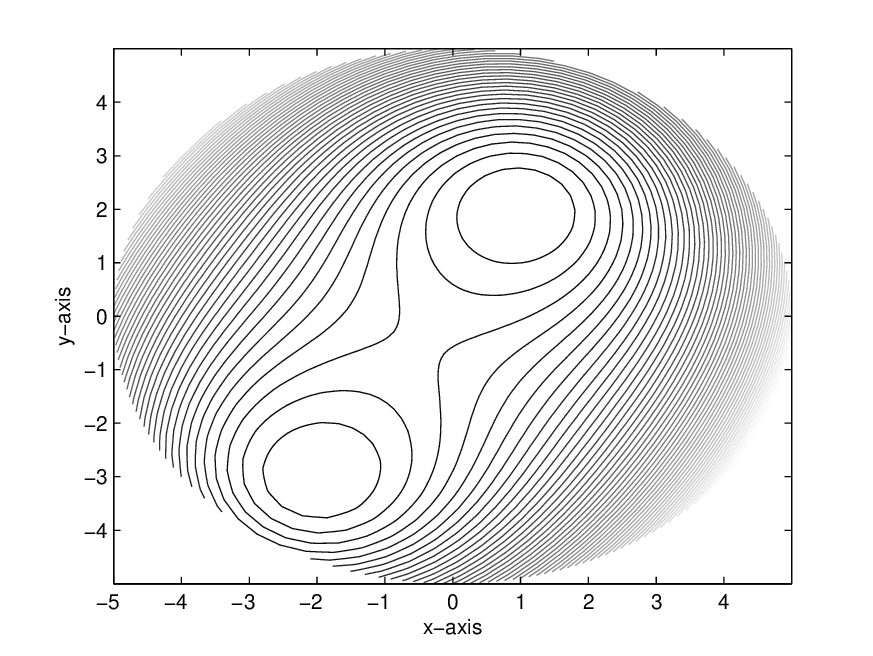}}
\subfloat[Level curves for $f_{j,3}$]{
    \includegraphics[width=.48\textwidth]{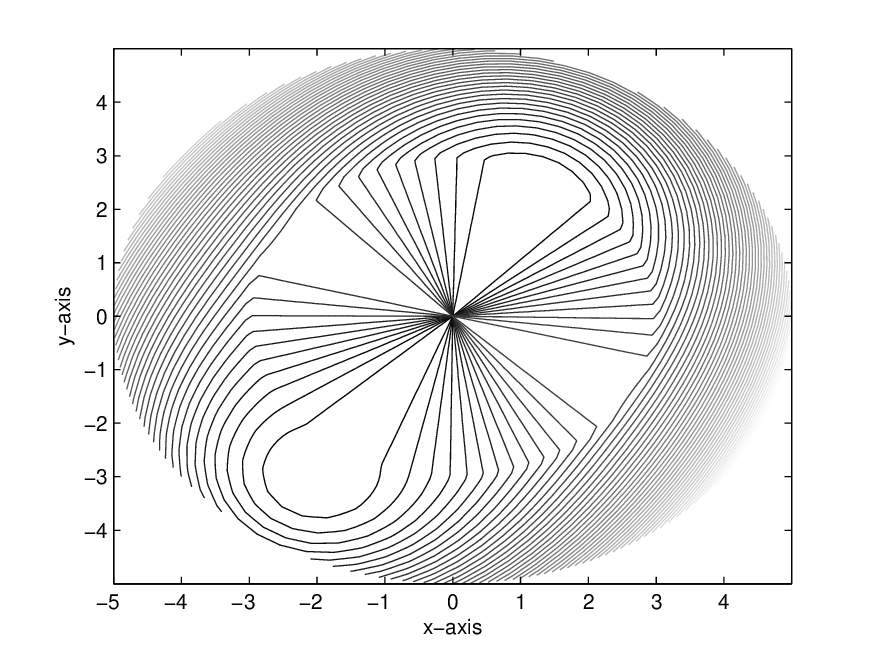}}
\caption{An example: $f_j(x,y) := [(x-1)^2+(y-2)^2] \cdot [(x+2)^2+(y+3)^2]$} \label{fig:fcts}
\end{figure}
With these conical functions we have
\begin{equation}\label{eqn:sqrt}
      \int_{B_t}|Df_{j,t}|\, dx = 
              \frac{t}{n-1}    \int_{\partial B_t} |D f_j|  \left\{  1-\frac{\langle x, D f_j \rangle^2}{|x|^2|D f_j|^2}  \right\}^{1/2}
                                              \, d\mathcal{H}^{n-1}  \mbox{ a.e in $t$}.
\end{equation}
Then $\{E_j\}$ V-permissible implies if $\epsilon \rightarrow 0$, then
$f_{j,t}(x;\epsilon,t)\rightarrow \chi_{\tilde E_j}$ for some {\em set}
$\tilde E_j$ for $j=0,1,2$.  It follows from the V-permissibility of $\{E_j\}$
that $\{\tilde E_j\}$ have the properties that $\tilde E_j \cap \tilde E_i = \emptyset$
for $i\ne j$ and that $\cup \textrm{ closure}\,(\tilde E_j) = B_R$.  It remains to show
that each $\tilde E_j$ is a set of finite perimeter.  Notice that 
\begin{eqnarray}
\int_{B_t}|Df_{j,t}|\, dx &=& \frac{t}{n-1}\int_{\partial B_t}
       |D f_j|\left\{1-\frac{\langle x, D f_j \rangle^2}{|x|^2|D f_j|^2}\right\}^{1/2}\, d\mathcal{H}^{n-1} \nonumber\\
 &\leq& \frac{t}{n-1} \int_{\partial B_t} |D f_j|\, d\mathcal{H}^{n-1} \nonumber\\
&<& \infty \quad \textrm{a.e. in } t,
\end{eqnarray}
then Theorem\refthm{Helly}states that there is a subsequence
$f_j(x;\epsilon_k,t)$ converging in $L^1$ to $\tilde f_j(x;t)\in BV(B_R)$
where the total variations converge as well.  Thus $\tilde E_j$ are sets of
finite perimeter, and $\{\tilde E_j\}$ is permissible, but the volume constraints which will be off by an amount 
controlled by $Ct^n.$  Thus, by applying
Almgren's Volume Adjustment Lemma (see Lemma \ref{PfVA}), we get:
$$
\newnu_V \left(\{E_j\},B_t\right) \leq \mathcal{F}_S\left(\{\tilde E_j\},B_t\right) + C t^n
       = \lim_{\epsilon\rightarrow 0} \mathcal{F}_S\left(\{f_j(x;\epsilon,t)\},B_t\right) + Ct^n.
$$
Then by using
$$
\newnu_V \left(\{E_j\},B_t\right) = \mathcal{F}_S\left(\{E_j\},B_t\right) - \Psi_V \left(\{E_j\},B_t\right),
$$
with Equation \eqref{eqn:sqrt} and the Taylor series for $\sqrt{1-x}$ at $0$ with $x>0$ small, we obtain
\begin{eqnarray}
& & \mathcal{F}_S\left(\{E_j\},B_t\right) - \Psi_V \left(\{E_j\},B_t\right) \nonumber\\
&\leq& \mathcal{F}_S\left(\{ \tilde{E}_j\},B_t\right) + Ct^n \nonumber\\
&\leq& \lim_{\epsilon\rightarrow 0}\sum^2_{j=0} \frac{t\alpha_j}{n-1}
      \left(\int_{\partial B_t} |D  f_j|\, d\mathcal{H}^{n-1}
           - \frac{1}{2}\int_{\partial B_t} \frac{\left\langle x,D f_j\right\rangle^2}{|x|^2|D f_j|}\,
                           d\mathcal{H}^{n-1} \right. \nonumber\\
& &     \left. \hspace{1.1in} - \frac{1}{8}\int_{\partial B_t} \frac{\left\langle x,D f_j\right\rangle^4}{|x|^4|D f_j|^3}\, 
                           d\mathcal{H}^{n-1}
      \right) + Ct^n. \qquad
\end{eqnarray}
Then by rearranging terms and multiplying through by $(n-1)t^{-n}$ we get:
\begin{eqnarray}
&&\lim_{\epsilon\rightarrow 0}\sum^2_{j=0} \alpha_j \frac{t^{1-n}}{2}
               \int_{\partial B_t} \frac{\left\langle x, D f_j \right\rangle^2}{|x|^2|Df_j|}\, d\mathcal{H}^{n-1} \nonumber\\
&\leq& -(n-1)t^{-n} \mathcal{F}_S \left(\{E_j\},B_t\right) + (n-1) t^{-n} \Psi_V \left(\{E_j\},B_t\right)  \nonumber \\
&& \quad + \; \lim_{\epsilon\rightarrow 0}t^{1-n}\sum^2_{j=0} \alpha_j \int_{\partial B_t} |D f_j| \, d\mathcal{H}^{n-1}
            - \lim_{\epsilon\rightarrow 0} \sum^2_{j=0} \frac{\alpha_j}{8}t^{1-n}
              \int_{\partial B_t} \frac{\left\langle x,D f_j\right\rangle^4}{|x|^4|D f_j|^3}\,
                                    d\mathcal{H}^{n-1}  + C \quad\qquad \\
&=& \lim_{\epsilon\rightarrow 0}\left[ -(n-1)t^{-n} \mathcal{F}_S\left(\{f_j\},B_t\right)
              + t^{1-n}\sum^2_{j=0} \alpha_j \int_{\partial B_t} |D f_j| \, d\mathcal{H}^{n-1}  \right]   \nonumber\\
&& \quad + \; (n-1) t^{-n} \Psi_V \left(\{E_j\},B_t\right) 
              - \lim_{\epsilon\rightarrow 0} \sum^2_{j=0} \frac{\alpha_j}{8}
                    \int_{\partial B_t} |x|^{1-n}
        \left\langle \frac{x}{|x|},\frac{D f_j}{|D f_j|}\right\rangle^4 |D f_j|\, d\mathcal{H}^{n-1} + C \quad\qquad \\
&=& \lim_{\epsilon\rightarrow 0}\left\{\frac{d\,}{dt}
             \left[ t^{1-n} \mathcal{F}_S\left(\{f_j\},B_t\right)\right]
                 - \sum^2_{j=0} \frac{\alpha_j}{8}\int_{\partial B_t} |x|^{1-n}
                 \left\langle \frac{x}{|x|},\frac{D f_j}{|D f_j|}\right\rangle^4 |D f_j|\, d\mathcal{H}^{n-1}\right\}  \nonumber\\
&& \quad + \; (n-1) t^{-n} \Psi_V \left(\{E_j\},B_t\right)\, +C \ \ \text{a.e.}\, t.
\end{eqnarray}
Integrating with respect to $t$ between $\rho$ and $r$, we have
\begin{eqnarray}
&&\lim_{\epsilon\rightarrow 0}\sum^2_{j=0} \frac{\alpha_j}{2}
             \int_{B_r\setminus B_\rho} \frac{\left\langle x, Df_j\right\rangle^2}{|x|^{n+1}|D f_j|}\, dx \nonumber\\
&\leq& r^{1-n} \mathcal{F}_S\left(\{E_j\},B_r\right)
             - \rho^{1-n} \mathcal{F}_S\left(\{E_j\},B_\rho\right) 
            + \; (n-1)\int^r_\rho t^{-n} \Psi_V \left(\{E_j\},B_t\right)\, dt  \label{eqn:integrating}\\
&& \quad - \lim_{\epsilon\rightarrow 0} \frac{\alpha_j}{8}
                                \int_{B_r\setminus B_\rho} |x|^{1-n}\left\langle \frac{x}{|x|},\frac{D f_j}{|D f_j|}\right\rangle^4 
                                        |D f_j|\, dx \nonumber
                + C(r - \rho) \,.
\end{eqnarray}
Finally, the Schwartz inequality implies
\begin{eqnarray}
&&\sum^2_{j=0}\left\{\lim_{\epsilon\rightarrow 0}\alpha_j\int_{B_r\setminus B_\rho}
               \left| \left\langle \frac{x}{|x|^n}, Df_j\right\rangle\right|\, dx \right\}^2 \nonumber\\
&\leq& \lim_{\epsilon\rightarrow 0}\left\{\sum^2_{j=0}
              \alpha_j\int_{B_r\setminus B_\rho} |x|^{1-n}|Df_j|\, dx \int_{B_r\setminus B_\rho}
                      \frac{\left\langle x,Df_j \right\rangle^2 }{|x|^{n+1}|Df_j|}\, dx\right\} \\
&\leq& 2\left(\lim_{\epsilon\rightarrow 0}\sum^2_{j=0}
              \int_{B_r\setminus B_\rho} |x|^{1-n}|Df_j|\, dx \right)\left(\lim_{\epsilon\rightarrow0} 
                   \sum^2_{j=0}\frac{\alpha_j}{2}\int_{B_r\setminus B_\rho}
                  \frac{\left\langle x,Df_j \right\rangle^2 }{|x|^{n+1}|Df_j|}\, dx\right). \quad\qquad
\end{eqnarray}
The result follows by combining the preceding with \eqref{eqn:integrating} and the application of Theorem~3, p.~175 in \cite{EG}.

\end{proof}

\begin{corollary}
Suppose $\{E_j\}\in BV(B_R)$ is V-permissible and is made up of sets of finite perimeter and $0<\rho<r<R$.  Further, suppose $\Psi_V \left(\{E_j\}\right) \equiv 0$. Then
\begin{eqnarray}\label{eq:Weakmono}
   & &   \rho^{1-n}\mathcal{F}_S\left(\{E_j\},B_\rho\right) + C\rho
      +  \sum_{j = 0}^2\frac{\alpha_j}{8}\int_{B_r\setminus B_\rho} |x|^{1-n} 
             \left\langle \frac{x}{|x|}, \frac{D\chi_{E_j}}{|D\chi_{E_j}|} \right\rangle^4 |D\chi_{E_j}|\, dx \nonumber\\
       & & \hspace{.8in} \leq r^{1-n}\mathcal{F}_S\left(\{E_j\},B_r\right) + Cr .\quad\qquad
\end{eqnarray}
\end{corollary}


\section{The Monotonicity of Scaled Energy (Part II)}
\label{MSE2}

In this section we temporarily abandon the volume constraint and produce a sharp formula
for monotonicity of scaled energy.
\begin{theorem}  \label{sharpmono}
Let $d = dist(0,\partial \Omega)$.  If $\{E_j\}$ is a D-minimizer in $\Omega$ and
$0\in\Omega\cap(\cap_j\partial E_j)$, then for a.e. $r \in (0,d)$, 
\begin{equation}
\label{eq:Mono}
       \frac{d}{dr} \left(r^{1-n} \cdot \mathcal{F}_S(\{E_j\},B_r) \right)
    = \frac{d}{dr}\sum^2_{j=0} \alpha_j \int_{B_r\cap\partial^*\!E_j} \frac{(\nu_{E_j}(x)\cdot x)^2}{|x|^{n+1}}\,
                    d\mathcal{H}^{n-1}(x).
\end{equation}
\end{theorem}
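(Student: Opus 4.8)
The plan is to use the fact that a D-minimizer, unlike the V-minimizers of Section~\ref{MSE1}, is stationary under \emph{every} compactly supported inner variation --- equivalently, the weighted perimeter $\sum_j\alpha_j\int|D\chi_{E_j}|$ has vanishing first variation in $B_d$ --- and this is precisely what upgrades the one-sided estimate \eqref{eqn:full_mono} to an identity. The argument is then the classical monotonicity computation for stationary points of (weighted) area, and the triple junction causes no difficulty because it is $\mathcal{H}^{n-1}$-negligible (away from it the interfaces $\partial^* E_i\cap\partial^* E_j$ are analytic minimal surfaces by the classical regularity theory).

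The first step is the first-variation identity. Fix $r<d$ and $X\in C^1_c(\Omega;\R^n)$ supported in some $\overline{B_{r'}}\subset\subset\Omega$ with $r<r'<d$. The flow $\Phi_t$ of $X$ sends $\{E_j\}$ to a permissible triple with the same boundary data, so $t\mapsto\mathcal{F}_S(\{\Phi_t(E_j)\})$ is minimized at $t=0$; differentiating the standard first-variation formula for perimeter gives
\begin{equation}
   \sum_{j=0}^2 \alpha_j \int_{\partial^* E_j} \mathrm{div}_{\partial^* E_j} X \; d\mathcal{H}^{n-1} = 0 ,
   \qquad \mathrm{div}_{\partial^* E_j}X := \mathrm{div}\,X - \nu_{E_j}\cdot (DX)\,\nu_{E_j}\, ,
\label{eq:fv}
\end{equation}
evaluated $\mathcal{H}^{n-1}$-a.e.\ on the rectifiable set $\partial^* E_j$. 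Because \eqref{eq:fv} is an integral over each \emph{entire} reduced boundary, no ``boundary of $\partial^* E_j$'' term and no separate contribution of the triple junction appears, so the junction does not have to be analyzed directly.

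The second step is to insert the radial test fields $X(x)=\gamma(|x|)\,x$, with $\gamma$ a smooth cutoff. Using $\mathrm{div}_{\partial^* E_j}X = (n-1)\gamma(|x|)+|x|\gamma'(|x|)\bigl(1-|x|^{-2}(x\cdot\nu_{E_j})^2\bigr)$ in \eqref{eq:fv}, rewriting the $\gamma'$-terms as integrals over the spheres $\partial B_t$ by the coarea formula on $\partial^* E_j$, and then letting $\gamma$ increase to $\mathbf{1}_{[0,r]}$ --- which is legitimate because $\mathcal{H}^{n-1}(\partial^* E_j\cap\partial B_t)=0$ for a.e.\ $t$ and the two-sided traces of $\chi_{E_j}$ on such spheres are well behaved (Theorems~\ref{tst} and~\ref{thm:giusti2.4}) --- one obtains, after doing the same at an inner radius $\rho$ and subtracting,
\begin{equation}
   r^{1-n}\mathcal{F}_S(\{E_j\},B_r) - \rho^{1-n}\mathcal{F}_S(\{E_j\},B_\rho)
   = \sum_{j=0}^2\alpha_j\int_{(B_r\setminus B_\rho)\cap\partial^* E_j}\frac{(\nu_{E_j}\cdot x)^2}{|x|^{n+1}}\;d\mathcal{H}^{n-1}
\label{eq:intmono}
\end{equation}
for all ``good'' radii $0<\rho<r<d$. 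Since the right-hand integrand is nonnegative this is a true monotonicity statement, and differentiating \eqref{eq:intmono} in $r$ gives \eqref{eq:Mono}.

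I expect the second step to be the main obstacle: one must choose the cutoff family so the coarea rewriting is valid, control the error as $\gamma\uparrow\mathbf{1}_{[0,r]}$ using the a.e.-in-$t$ vanishing of $\mathcal{H}^{n-1}(\partial^* E_j\cap\partial B_t)$, and take all limits along radii at which the two-sided traces behave well --- which is exactly what the $BV$ machinery collected in Section~\ref{BBV} is for. A secondary point, routine but worth stating, is that \eqref{eq:fv} really is available: inner variations supported in $B_{r'}\subset\subset\Omega$ preserve permissibility and boundary data but \emph{not} the volumes, which is precisely why the sharp identity fails for V-minimizers and the weaker Section~\ref{MSE1} result is needed there instead.
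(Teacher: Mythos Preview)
Your proposal is correct and follows essentially the same route as the paper: both use stationarity of the D-minimizer under compactly supported inner variations (the paper states this as Corollary~\ref{VSMC}), insert the radial test field $X(x)=\gamma(|x|/r)\,x$, compute the tangential divergence exactly as you do, and then pass from smooth cutoffs to the characteristic function of $[0,r]$.  The paper, following Maggi, carries out your ``main obstacle'' step by introducing the explicit Lipschitz family $\varphi_\epsilon(s)=\chi_{(-\infty,1-\epsilon)}(s)+\frac{1-s}{\epsilon}\chi_{(1-\epsilon,1)}(s)$ and proving a separate claim that the derivatives $\Phi_\epsilon'(r)$ and $\Psi_\epsilon'(r)$ converge at every $r$ where the limiting quantities are differentiable, arriving directly at the differential identity rather than at your integrated form~\eqref{eq:intmono}; but this is a presentational difference, not a mathematical one.
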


\begin{proof}
We follow Theorem 28.9 of Maggi \cite{Mag}.
Given any $\varphi\in C^\infty(\R;[0,1])$ with $\varphi = 1$ on $(-\infty, 1/2)$,
$\varphi = 0$ on $(1,\infty)$ and $\varphi^\prime \leq 0$ on $\R$, we define the following associated functions
\begin{equation}\label{def:Phi}
                     \Phi(r) 
                 = \sum_{j=0}^2\alpha_j\int_{\partial^*\! E_j} \varphi\left(\frac{|x|}{r}\right)\, 
                                          d\mathcal{H}^{n-1}(x), \quad r\in(0,d),
\end{equation}
and
\begin{equation}\label{def:Psi}
                     \Psi(r)
                  = \sum_{j=0}^2\alpha_j\int_{\partial^*\! E_j} \varphi\left(\frac{|x|}{r}\right) 
                                          \frac{(x\cdot\nu_{E_j}(x))^2}{|x|^2}\, d\mathcal{H}^{n-1}(x), \quad r\in(0,d).
\end{equation}
Note 
\begin{equation}
                       \Phi^\prime(r)
                   = -\sum_{j=0}^2\alpha_j\int_{\partial^* \!E_j} \varphi^\prime\left(\frac{|x|}{r}\right)
                                          \frac{|x|}{r^2}\, d\mathcal{H}^{n-1}(x), \quad r\in(0,d),
\end{equation}
and
\begin{equation}
                        \Psi^\prime(r)
                   = -\sum_{j=0}^2\alpha_j\int_{\partial^*\! E_j} \varphi^\prime\left(\frac{|x|}{r}\right)
                                          \frac{|x|}{r^2} \frac{(x\cdot\nu_{E_j}(x))^2}{|x|^2}\,
                                          d\mathcal{H}^{n-1}(x), \quad r\in(0,d).
\end{equation}
Define
\begin{equation}
                                  T_r\in C^1_c(\Omega;\R^n), 
                  \quad       \mathit{T_r(x) = \varphi\left( \frac{|x|}{r}\right) x, \quad x\in\R^n},
\end{equation}
and observe the identities
\begin{eqnarray}
\nabla T_r &
       =& \varphi\left( \frac{|x|}{r}\right)Id
         + \frac{|x|}{r}\varphi^\prime\left( \frac{|x|}{r}\right) \frac{x}{|x|}\otimes\frac{x}{|x|}, 
                                               \quad \forall x \in \R^n \\
\text{div} \, T_r &
        =& n\varphi\left( \frac{|x|}{r}\right)
         + \frac{|x|}{r}\varphi^\prime\left( \frac{|x|}{r}\right), 
                                               \quad \forall x \in \R^n \\
\nu_E \cdot \nabla T_r\nu_E &
         =& \varphi\left( \frac{|x|}{r}\right)
         + \frac{|x|}{r}\varphi^\prime\left( \frac{|x|}{r}\right) \frac{(x\cdot\nu_E(x))^2}{|x|^2},
                                               \quad \forall x \in \partial^*E \\
\text{div}_E\, T_r &
          =& \text{div}\, T_r - \nu_E \cdot \nabla T_r\nu_E  \nonumber\\
        &=& (n-1)\varphi\left( \frac{|x|}{r}\right) 
          + \frac{|x|}{r}\varphi^\prime\left( \frac{|x|}{r}\right)\left( 1 - \frac{(x\cdot\nu_E(x))^2}{|x|^2} \right) \;.
\end{eqnarray}
Now we quote Theorem 17.5 of \cite{Mag}:
\begin{theorem}[First variation of perimeter]   \label{FVP}
Suppose $A \subset \R^n$ is open, $E$ is a set of locally finite perimeter, and
$\{ f_t \}_{|t| < \epsilon}$ is a local variation in $A.$  Then
\begin{equation}
   \int_{A} | D\chisub{f_t(E)}| = \int_{A} | D\chisub{E}| 
                                                + t \int_{\partial^*\!E} \mathrm{div}_{E} T \, d\mathcal{H}^{n-1}(x)
                                                + O(t^2) \, ,
\label{eq:FVP}
\end{equation}
where $T$ is the initial velocity of $\{ f_t \}_{|t| < \epsilon}$ and $\mathrm{div}_{E} T: \partial^*\!E \rightarrow \R$
is given above.  ($T$ is the initial velocity of $\{ f_t \}_{|t| < \epsilon}$ means
$$\frac{\partial}{\partial t} f(t,x) = T(x)$$ when $f$ is evaluated at $t = 0.$)
\end{theorem}
In the same way that Maggi proves Corollary 17.14 from this statement, we can show:
\begin{corollary}[Vanishing sums of mean curvature]   \label{VSMC}
A permissible triple $\{E_j\}$ is stationary for $\mathcal{F}_S$ in $\Omega$ if and only if
\begin{equation}
     \sum_{j=0}^2 \alpha_j \int_{\partial^*\!E_j} \mathrm{div}_{E_j} T \, d\mathcal{H}^{n-1}(x) = 0 \, ,
     \quad \forall T \in C^1_{c}(\Omega; \R^n).
\label{eq:Edivrel}
\end{equation}
\end{corollary}
Returning to our proof of Theorem \ref{sharpmono} we compute:
\begin{eqnarray}
(n-1)\Phi(r) - r\Phi^\prime(r) &
            =& (n-1) \sum_{j=0}^2\alpha_j \int_{\partial^* \!E_j} \varphi\left(\frac{|x|}{r}\right)\, 
                              d\mathcal{H}^{n-1}(x) \nonumber\\
          & & + \sum_{j=0}^2\alpha_j\int_{\partial^* \!E_j} \varphi^\prime\left(\frac{|x|}{r}\right) \frac{|x|}{r}\,
                              d\mathcal{H}^{n-1}(x) \\ 
           &=&  \sum_{j=0}^2\alpha_j\int_{\partial^*\! E_j} \varphi^\prime\left(\frac{|x|}{r}\right) \frac{|x|}{r} \cdot
                              \frac{(x\cdot\nu_{E_j}(x))^2}{|x|^2}\,
                              d\mathcal{H}^{n-1}(x) \\
           &=& - r\Psi^\prime(r),
\end{eqnarray}
or
\begin{equation}
\frac{\Phi^\prime(r)}{r^{n-1}} - (n-1)\frac{\Phi(r)}{r^{n}} = \frac{\Psi^\prime(r)}{r^{n-1}} \quad a.e.\ r\in(0,d).
\end{equation}
Next, for $\epsilon\in(0,1)$, define Lipschitz functions $\varphi_\epsilon : \R \rightarrow [0,1]$ as
\begin{equation}
        \varphi_\epsilon(s) 
                   = \chi_{(-\infty,1-\epsilon)}(s) + \frac{1-s}{\epsilon}\chi_{(1-\epsilon, 1)}(s), \quad s\in\R,
\end{equation}
and define $\Phi_\epsilon(r)$ and $\Psi_\epsilon(r)$ by replacing $\varphi$ with $\varphi_{\epsilon}$ in the
definitions of $\Phi(r)$ and $\Psi(r)$ respectively.
Then, by approximation using Theorem~2, p.~\!\!\!~172 of \cite{EG} or something similar, for
$\epsilon\in(0,1)$ and $\varphi = \varphi_\epsilon$ in \eqref{def:Phi} and \eqref{def:Psi} we obtain
\begin{equation}\label{eqn:EpsEqn}
   \frac{\Phi^\prime_\epsilon(r)}{r^{n-1}} - (n-1)\frac{\Phi_\epsilon(r)}{r^{n}} = 
   \frac{\Psi^\prime_\epsilon(r)}{r^{n-1}} \quad a.e.\ r\in(0,d).
\end{equation}
Define $\Phi_0(r) = \mathcal{F}_S(\{E_j\}, B_r)$ and 
\begin{equation}
               \gamma(r) 
              = \sum_{j=0}^2\alpha_j\int_{B_r\cap\partial^*\!E_j} \frac{(\nu_{E_j}(x)\cdot x)^2}{|x|^{n+1}}\,
                                               d\mathcal{H}^{n-1}(x), \quad r\in(0,d).
\end{equation}
For $r \in (0,d),$ the Lebesgue Dominated Convergence Theorem implies 
\begin{equation}\label{eqn:compact}
                        \Phi_\epsilon 
                 \rightarrow  \sum_{j=0}^2\alpha_j\int_{\partial^* \!E_j\cap B_r}  d\mathcal{H}^{n-1}(x)
                = \Phi_0, \quad \mbox{ as $\epsilon \rightarrow 0$}.
\end{equation}
\begin{claim}
For $a.e.\, r\in(0,d)$ 
\begin{equation}\label{eqn:claim}
                      \Phi^\prime_\epsilon(r) \rightarrow \Phi^\prime_0(r), \quad
                      \Psi^\prime_\epsilon(r) \rightarrow r^{n-1}\gamma^\prime(r),
\end{equation}
as $\epsilon\rightarrow 0$. In particular, this holds for every $r\in(0,d)$ where $\Phi_0$ and $\gamma$ are differentiable.
\end{claim}
\begin{proof} (of claim)
Upon examining, we write
\begin{equation}
\Phi_\epsilon(r) = 
       \sum^2_{j=0} \alpha_j \left(
             \int_{\partial^*\!E_j\cap B_{r(1-\epsilon)}} \, d\mathcal{H}^{n-1}(x) +
             \int_{\partial^*\!E_j\cap(B_r \setminus B_{r(1-\epsilon)})} \left(\frac{1}{\epsilon} - \frac{|x|}{\epsilon r}\right)\, d\mathcal{H}^{n-1}(x) \right).
\end{equation}
We wish to differentiate the term in the parentheses above.  We can express that term as:
\begin{equation}
       I_1(r) + I_2(r) :=
       \int_{\partial^*\!E_j\cap B_{r(1-\epsilon)}} \left(1 - \frac{1}{\epsilon} + \frac{|x|}{\epsilon r}\right)\, d\mathcal{H}^{n-1}(x) +
       \int_{\partial^*\!E_j\cap B_r} \left(\frac{1}{\epsilon} - \frac{|x|}{\epsilon r}\right)\, d\mathcal{H}^{n-1}(x) \;.
\label{eq:twoints}
\end{equation}
Then
\begin{eqnarray}
I_1^{\prime}(r) &=& \int_{\partial^*\!E_j\cap \, \partial B_{r(1-\epsilon)}} 0 \, d\mathcal{H}^{n-1}(x) 
                           - \int_{\partial^*\!E_j\cap \, B_{r(1-\epsilon)}} \left( \frac{|x|}{\epsilon r^2}\right) d\mathcal{H}^{n-1}(x) \nonumber \\
                &=& - \int_{\partial^*\!E_j\cap \, B_{r(1-\epsilon)}} \left( \frac{|x|}{\epsilon r^2}\right) d\mathcal{H}^{n-1}(x) \, ,
\label{eq:I1der}
\end{eqnarray}
and
\begin{eqnarray}
I_2^{\prime}(r) &=& \int_{\partial^*\!E_j\cap \, \partial B_{r}} 0 \, d\mathcal{H}^{n-1}(x) 
                           + \int_{\partial^*\!E_j\cap \, B_{r}} \left( \frac{|x|}{\epsilon r^2}\right) d\mathcal{H}^{n-1}(x) \nonumber\\
                &=&  \int_{\partial^*\!E_j\cap \, B_{r}} \left( \frac{|x|}{\epsilon r^2}\right) d\mathcal{H}^{n-1}(x)\, .
\label{eq:I2der}
\end{eqnarray}
Then it follows that
\begin{equation}
\Phi^\prime_\epsilon(r) = \frac{1}{\epsilon r}\sum^2_{j=0}
         \alpha_j\int_{\partial^*\!E_j\cap(B_r \setminus B_{r(1-\epsilon)})}
          \frac{|x|}{r}\, d\mathcal{H}^{n-1}(x), \quad a.e.\, r\in(0,d),
\end{equation}
and we estimate to obtain
\begin{equation}
(1-\epsilon)\frac{\mathcal{F}_S(\{E_j\},B_r) - \mathcal{F}_S(\{E_j\},B_{r-\epsilon r})}{\epsilon r} \leq \Phi^\prime_\epsilon(r) \leq \frac{\mathcal{F}_S(\{E_j\},B_r) - \mathcal{F}_S(\{E_j\},B_{r-\epsilon r})}{\epsilon r}. 
\end{equation}
Thus, if $\Phi_0(r)$ is differentiable at $r$, then $\Phi^\prime_\epsilon(r) \rightarrow \Phi^\prime_0(r)$ as $\epsilon\rightarrow 0^+$.

Next, upon examining $\Psi_\epsilon$, we write
\begin{equation}
\begin{array}{ll}
\Psi_\epsilon(r) &\!\!\!\!\displaystyle{= 
       \sum^2_{j=0} \alpha_j \left(
                 \int_{\partial^*\!E_j\cap B_{r(1-\epsilon)}} 
                                \frac{(x\cdot \nu_{E_j}(x))^2}{|x|^2} \, d\mathcal{H}^{n-1}(x) \right.} \\
             \ &+ \displaystyle{\left. \int_{\partial^*\!E_j\cap(B_r \setminus B_{r(1-\epsilon)})} 
                                \left(\frac{1}{\epsilon} - \frac{|x|}{\epsilon r}\right) \frac{(x\cdot \nu_{E_j}(x))^2}{|x|^2}  \,
                                                d\mathcal{H}^{n-1}(x) \right)} \;.
\end{array}
\end{equation}
Once again we wish to differentiate this term, so we express the term within the parentheses as:
\begin{equation}
\begin{array}{rl}
       \tilde{I}_1(r) + \tilde{I}_2(r) 
      &\displaystyle{\!\!\!\!\!:=
       \int_{\partial^*\!E_j\cap B_{r(1-\epsilon)}} \left(1 - \frac{1}{\epsilon} + \frac{|x|}{\epsilon r}\right)\, 
                                  \frac{(x\cdot \nu_{E_j}(x))^2}{|x|^2} \, d\mathcal{H}^{n-1}(x)} \\
    \ &\displaystyle{+
       \int_{\partial^*\!E_j\cap B_r} \left(\frac{1}{\epsilon} - \frac{|x|}{\epsilon r}\right)\, 
                                 \frac{(x\cdot \nu_{E_j}(x))^2}{|x|^2} \, d\mathcal{H}^{n-1}(x) \;.}
\end{array}
\label{eq:twointsagain}
\end{equation}
Then
\begin{eqnarray}
\tilde{I}_1^{\prime}(r) &=& \int_{\partial^*\!E_j\cap \, \partial B_{r(1-\epsilon)}} 0 \, d\mathcal{H}^{n-1}(x) 
                           - \int_{\partial^*\!E_j\cap \, B_{r(1-\epsilon)}} \left( \frac{|x|}{\epsilon r^2}\right) 
                                                  \frac{(x\cdot \nu_{E_j}(x))^2}{|x|^2} d\mathcal{H}^{n-1}(x) \nonumber\\
                        &=& - \int_{\partial^*\!E_j\cap \, B_{r(1-\epsilon)}} \left( \frac{|x|}{\epsilon r^2}\right) 
                        \frac{(x\cdot \nu_{E_j}(x))^2}{|x|^2} d\mathcal{H}^{n-1}(x) \, ,
\label{eq:I1tilder}
\end{eqnarray}
and
\begin{eqnarray}
\tilde{I}_2^{\prime}(r) &=& \int_{\partial^*\!E_j\cap \, \partial B_{r}} 0 \, d\mathcal{H}^{n-1}(x) 
                           + \int_{\partial^*\!E_j\cap \, B_{r}} \left( \frac{|x|}{\epsilon r^2}\right)
                                                  \frac{(x\cdot \nu_{E_j}(x))^2}{|x|^2} d\mathcal{H}^{n-1}(x) \nonumber\\
                        &=& \int_{\partial^*\!E_j\cap \, B_{r}} \left( \frac{|x|}{\epsilon r^2}\right)
                        \frac{(x\cdot \nu_{E_j}(x))^2}{|x|^2} d\mathcal{H}^{n-1}(x)\, ,
\label{eq:I2tilder}
\end{eqnarray}
implying
\begin{equation}
             \frac{\Psi^\prime_\epsilon(r)}{r^{n-1}} 
                        = \frac{1}{\epsilon r} \sum^2_{j=0}\alpha_j 
               \int_{\partial^*\!E_j\cap(B_r \setminus B_{r(1-\epsilon)})} \left( \frac{|x|}{r} \right)^n 
                            \frac{(x\cdot \nu_{E_j}(x))^2}{|x|^{n+1}}  \,  d\mathcal{H}^{n-1}(x) \quad a.e.\, r\in(0,d).
\end{equation}
As before, it follows that 
\begin{equation}
(1-\epsilon)^n\frac{\gamma(r) - \gamma(r-\epsilon r)}{\epsilon r} \leq 
        \frac{\Psi^\prime_\epsilon(r)}{r^{n-1}} \leq \frac{\gamma(r) - \gamma(r-\epsilon r)}{\epsilon r},
\end{equation}
and if $\gamma(r)$ is differentiable at $r$, then $\Psi^\prime_\epsilon(r) \rightarrow r^{n-1}\gamma^\prime_0(r)$ as $\epsilon\rightarrow 0^+$.
Therefore the claim holds.
\end{proof}
From \eqref{eqn:EpsEqn}, \eqref{eqn:compact} and \eqref{eqn:claim} we find
\begin{equation}
\frac{\Phi^\prime_0(r)}{r^{n-1}} - (n-1)\frac{\Phi_0(r)}{r^{n}} = \gamma^\prime(r) \, ,
\end{equation}
and this proves Equation\refeqn{Mono}\!\!.
\end{proof}

As we have mentioned, there are other related monotonicity formulas.  Our first monotonicity
formula\refeqn{Weakmono}is based off of work found in Guisti's monograph, however, we
sharpened it by including an explicit increment in the difference in the scaled energies for
two different radii.  This increment is measuring how far a configuration deviates from a
cone.  Maggi completely characterized the monotonicity for the problem that Guisti considered,
insofar as he produced a formula with an equality, and his scaled energy is constant as a function
of radius when the configuration is a cone.  We based our second approach on his methods,
and our generalization is found in Theorem\refthm{sharpmono}\!\!, although like Maggi we do not
consider a volume constraint in obtaining this result.

Morgan \cite{Mo2016} defines a mass ratio $\Theta(T,a,r)$ that is equivalent to Guisti's
formulation of scaled energy (which is the formulation used in this paper). Then Morgan
goes on to prove a monotonicity result (credited to Federer \cite{Fed}) saying that
$\Theta(T,a,r)$ is a monotonically increasing function of $r$.  This result corresponds to
what Guisti and Maggi wrote about, but is apparantly not as sharp as Maggi's result.
On page 108 of \cite{Mo2016}, Morgan describes Allard's results \cite{Al} in that
``{\em integral varifolds of bounded first variation include surfaces of constant or
bounded mean curvature and soap bubble clusters.  They satisfy a weakened versions
of the monotonicity ... the area ratio times $e^{Cr}$ is monotonically increasing, where
$C$ is a bound on the first variation or mean curvature}'' [emphasis in original].
Because the value of $C$ can be taken to be zero in the case where there is no volume
constraint, we have reproduced, but not improved on this result.  In our formula in the case
with no volume constraint, the derivative of our scaled energy is given as an explicit positive
function which shows exactly how much the energy increases as the radius increases.





In both sections 5 and 6, an additional result that we were unable to prove was of the uniqueness of the blowup limit.
The introduction of Almgren's Big Regularity Paper \cite{Am2000}
discusses this difficulty, and some examples of slowly rotating configurations are in
Leonardi \cite{L2002}.  In fact, Leonardi gives an example which is spiral but which always blows up to the same
conical formation.  (See \cite{L2002}[Example 4.7].  This sort of behavior (i.e. a unique type of blowup limit, but no
uniqueness of the limit because of the necessity to get a convergent subsequence) can also be found in a paper by
the first author \cite{B}.)  In one related setting there has been success in showing that
the tangent cone is unique: See White \cite{W1983}.   To summarize, although we eventually have specific angle
conditions satisfied by the blowup limits, we cannot prove that the actual minimizers do not have some rotation that
becomes slower and slower that prevents the existence of multiple blowup limits.  (We do certainly conjecture that
the blowup limit will be unique.)

\section{Minimal Cones}
\label{MC}

We begin with the following result estimating the minimal energies by their Dirichlet data.
\begin{lemma}[Extension of Lemma 5.6 of \cite{G}] \label{G5.6}
Suppose $\{E_j\}$ and $\{\hat{E}_j\}$ are V-permissible for the 
same volume constraints in $B_R$ and are identical in $B_{\rho}^c.$  Suppose further that 
$\rho$ is small enough to guarantee that any perturbation to $\{E_j\}$ or to $\{\hat{E}_j\}$ within $B_{\rho}$
gives us something to which Almgren's Volume Adjustment Lemma applies.  (See Lemma~\ref{PfVA}.)  Then
\begin{equation}
        |\newnu_V (\{E_j\}, \rho) - \newnu_V (\{\hat{E}_j\}, \rho)|
   \leq \sum_{j = 0}^{2} \alpha_{j} \int_{\partial B_{\rho}} |\chisub{E_j}^{-} - \chisub{\hat{E}_j}^{-}| \; d\mathcal{H}^{n-1} + C\sum_{j=0}^2|\Delta V_j| \;,
\label{eq:G5.4}
\end{equation}
where $\Delta V_i$ is the symmetric difference $E_i \Delta \hat{E}_i.$  If instead of ``V-permissible''
we have ``permissible,'' then for any positive $\rho < R$ we have:
\begin{equation}
        |\newnu (\{E_j\}, \rho) - \newnu (\{\hat{E}_j\}, \rho)|
   \leq \sum_{j = 0}^{2} \alpha_{j} \int_{\partial B_{\rho}} |\chisub{E_j}^{-} - \chisub{\hat{E}_j}^{-}| \; d\mathcal{H}^{n-1} \;.
\label{eq:G5.42}
\end{equation}
\end{lemma}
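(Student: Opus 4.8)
The plan is to recast $\newnu_V(\{E_j\},\rho)$ (and $\newnu(\{E_j\},\rho)$) as a minimization internal to $B_\rho$, in which the interface that would be forced across $\partial B_\rho$ is replaced by an explicit penalty against the inner trace $\chisub{E_j}^{-}$; once that is in hand the lemma collapses to a triangle inequality.

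First I would establish the reformulation
\[
\newnu_V(\{E_j\},\rho)=\inf_{\{F_j\}}\Big\{\mathcal{F}_S(\{F_j\},B_\rho)+\sum_{j=0}^2\alpha_j\int_{\partial B_\rho}\big|\chisub{F_j}^{-}-\chisub{E_j}^{-}\big|\,d\mathcal{H}^{n-1}\Big\},
\]
the infimum over $\{F_j\}$ that are V-permissible inside $B_\rho$ with $|F_j|=|E_j\cap B_\rho|$, where $\chisub{F_j}^{-}$ is the trace of $F_j$ on $\partial B_\rho$ from inside (Theorem~\ref{tst}); the analogue for $\newnu$ drops the volume condition and holds for every $\rho\in(0,R)$. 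The inequality ``$\ge$'' is immediate: a competitor $\{\tilde E_j\}$ for $\newnu_V(\{E_j\},\rho)$ has $\mathrm{spt}(\chisub{E_j}-\chisub{\tilde E_j})$ a closed subset of the open ball $B_\rho$, hence compactly contained, so $\tilde E_j=E_j$ on $B_\rho\setminus\overline{B_s}$ for some $s<\rho$; its restriction $F_j=\tilde E_j\cap B_\rho$ has $\chisub{F_j}^{-}=\chisub{E_j}^{-}$, $|F_j|=|E_j\cap B_\rho|$, and $\mathcal{F}_S(\{\tilde E_j\},B_\rho)=\mathcal{F}_S(\{F_j\},B_\rho)$, so $\{F_j\}$ realizes the bracket with a vanishing penalty. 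For ``$\le$'' I take a near-optimal $\{F_j\}$ and, for small $\delta>0$ with $(1-\delta)\rho$ a good radius for $E$, build a competitor $\{\tilde E_j^\delta\}$ for $\newnu_V(\{E_j\},\rho)$ by putting the dilation $(1-\delta)F_j$ in $B_{(1-\delta)\rho}$ and filling $B_\rho\setminus\overline{B_{(1-\delta)\rho}}$ and $B_\rho^c$ with $E_j$. Decomposing $\mathcal{F}_S(\{\tilde E_j^\delta\},B_\rho)$ over $B_{(1-\delta)\rho}\sqcup\partial B_{(1-\delta)\rho}\sqcup(B_\rho\setminus\overline{B_{(1-\delta)\rho}})$: the first piece is $(1-\delta)^{n-1}\mathcal{F}_S(\{F_j\},B_\rho)$; the last is $\sum_j\alpha_j|D\chisub{E_j}|(B_\rho\setminus\overline{B_{(1-\delta)\rho}})\to0$ by continuity from above of a finite measure; and the middle one, the jump across $\partial B_{(1-\delta)\rho}$ identified by Theorem~\ref{tst}, equals $\sum_j\alpha_j\int_{\partial B_{(1-\delta)\rho}}\big|(\text{dilated }\chisub{F_j}^{-})-\tau_j^\delta\big|\,d\mathcal{H}^{n-1}$, with $\tau_j^\delta$ the trace of $E_j$ on $\partial B_{(1-\delta)\rho}$ approached from the annulus; by the pointwise triangle inequality this is at most $(1-\delta)^{n-1}\sum_j\alpha_j\int_{\partial B_\rho}|\chisub{F_j}^{-}-\chisub{E_j}^{-}|\,d\mathcal{H}^{n-1}$ plus a term that vanishes because $\tau_j^\delta\to\chisub{E_j}^{-}$ in $L^1$ as $\delta\downarrow0$ (radial convergence of traces, Lemma~\ref{G5.3} / Theorem~\ref{thm:giusti2.4}). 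The dilation throws the volumes off by $O(\delta\rho^n)$, corrected at cost $o(1)$ by Almgren's Volume Adjustment Lemma~\ref{PfVA} (legitimate for $\rho$ small, as assumed). Letting $\delta\downarrow0$ gives ``$\le$''.

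With the reformulation the lemma is one line. Since $\{E_j\}$ and $\{\hat E_j\}$ agree on $B_\rho^c$ and carry the same volumes, $|E_j\cap B_\rho|=|\hat E_j\cap B_\rho|$, so $\newnu_V(\{E_j\},\rho)$ and $\newnu_V(\{\hat E_j\},\rho)$ are infima of two functionals over the \emph{same} admissible class, differing only in whether the penalty measures $L^1$-distance to $\chisub{E_j}^{-}$ or to $\chisub{\hat E_j}^{-}$. Choosing $\{F_j\}$ near-optimal for $\newnu_V(\{E_j\},\rho)$ and using $|\chisub{F_j}^{-}-\chisub{\hat E_j}^{-}|\le|\chisub{F_j}^{-}-\chisub{E_j}^{-}|+|\chisub{E_j}^{-}-\chisub{\hat E_j}^{-}|$ gives $\newnu_V(\{\hat E_j\},\rho)\le\newnu_V(\{E_j\},\rho)+\varepsilon+\sum_j\alpha_j\int_{\partial B_\rho}|\chisub{E_j}^{-}-\chisub{\hat E_j}^{-}|\,d\mathcal{H}^{n-1}$; letting $\varepsilon\downarrow0$ and swapping $\{E_j\}\leftrightarrow\{\hat E_j\}$ (the bound is symmetric) yields \eqref{eq:G5.4}, the term $C\sum_j|\Delta V_j|$ being the slack into which the vanishing Almgren corrections are harmlessly absorbed. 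The permissible case \eqref{eq:G5.42} is the same argument with every volume consideration deleted, valid for all $\rho\in(0,R)$.

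The main obstacle is the ``$\le$'' half of the reformulation: a configuration that is free inside $B_\rho$ must be spliced onto the fixed exterior, which forces the dilation-and-collar device, and the delicate points there are the correct identification (via Theorem~\ref{tst}) of the interfacial mass on the gluing sphere and the $L^1$ convergence $\tau_j^\delta\to\chisub{E_j}^{-}$ — it is precisely this convergence that keeps the constant in front of the boundary integral equal to $1$ rather than $2$. Everything after the reformulation is soft.
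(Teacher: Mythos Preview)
Your approach is correct but takes a genuinely different route from the paper.  The paper's proof is a single direct gluing: given $\epsilon>0$, pick a near-minimizer $\{E_j^\varphi\}$ for $\newnu_V(\{E_j\},\rho)$, which by the definition of $\newnu_V$ already satisfies $\mathrm{spt}(\chisub{E_j^\varphi}-\chisub{E_j})\subset\subset B_\rho$; then for a sequence of good radii $\rho_k\uparrow\rho$ containing that support, splice $E_j^\varphi$ inside $B_{\rho_k}$ onto $\hat E_j$ outside, correct volumes by Almgren, and compute.  The interfacial contribution on $\partial B_{\rho_k}$ is exactly $\sum_j\alpha_j\int_{\partial B_{\rho_k}}|\chisub{E_j}-\chisub{\hat E_j}|\,d\mathcal{H}^{n-1}$ (since $E_j^\varphi=E_j$ there), the annular energy of $\hat E_j$ on $B_\rho\setminus B_{\rho_k}$ vanishes in the limit, and symmetry in $E\leftrightarrow\hat E$ finishes.

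Your penalty reformulation buys a conceptually clean endgame (a bare triangle inequality), at the cost of front-loading the work into the ``$\le$'' half of the reformulation, where you run a dilation-and-collar construction that is strictly more delicate than the paper's gluing: you need the radial trace convergence $\tau_j^\delta\to\chisub{E_j}^-$ and an Almgren correction for the dilation-induced volume defect.  The paper sidesteps the dilation entirely precisely because competitors for $\newnu_V$ already coincide with $E_j$ near $\partial B_\rho$, so no room needs to be made.  Both arguments show, incidentally, that the $C\sum_j|\Delta V_j|$ term in the statement is slack: the volume corrections in fact vanish in the limit.
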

\begin{proof}
The proof for Equation\refeqn{G5.42}is almost identical to the proof for Equation\refeqn{G5.4}\!, but
it is a little bit easier, so we will only prove Equation\refeqn{G5.4}\!.
Given $\epsilon > 0,$ we can choose $\{E_j^{\varphi}\}$ V-permissible so that we satisfy two relations:
\begin{enumerate}
   \item $\text{spt}(\chisub{E_j^{\varphi}} - \chisub{E_j}) \subset \subset B_{\rho},$ and
   \item $\mathcal{F}_S(\{E_j^{\varphi}\}, \rho) \leq \epsilon + \newnu_V (\{E_j\}, \rho) \;.$
\end{enumerate}
Let $\rho_k \uparrow \rho$ be taken such that
$$\int_{\partial B_{\rho_k}} |D\chisub{E_j}| = \int_{\partial B_{\rho_k}} |D\chisub{\hat{E}_j}| = 0 \;,$$
and $\text{spt}(\chisub{E_j^{\varphi}} - \chisub{E_j}) \subset \subset B_{\rho_k} \;$ for all $k \in \N$ and $j \in \{0,1,2\}.$
For any $j$ we define the set $\hat{E}_{j,k}$ by taking the union of $B_{\rho_k} \cap E_j^{\varphi}$ and
$\{ B_R \setminus B_{\rho_k} \} \cap \hat{E}_j.$  Now observe that $\{\hat{E}_{j,k}\}$ is permissible up to the volume constraint violation.  We then use Almgren's Lemma~\ref{PfVA} in order to compute:
\begin{alignat*}{1}
  \newnu_V (\{\hat{E}_j\}, \rho) &\leq \sum_{j = 0}^2 \alpha_j \int_{B_{\rho}} |D\chisub{\hat{E}_{j,k}}| 
								+ C\sum_{j=0}^2|\Delta V_j|\\
                           &= \sum_{j = 0}^2 \alpha_j \left( \int_{B_{\rho_k}} |D\chisub{E_j^{\varphi}}| 
                                                           + \int_{B_R \setminus B_{\rho_k}} |D\chisub{\hat{E}_j}| 
                                                           + \int_{\partial B_{\rho_k}} |\chisub{E_j} - \chisub{\hat{E}_j}| \right) 
								+ C\sum_{j=0}^2|\Delta V_j|\\
                           &\leq \sum_{j = 0}^2 \alpha_j \left( \int_{B_{\rho}} |D\chisub{E_j^{\varphi}}|
                                                              + \int_{B_R \setminus B_{\rho_k}} |D\chisub{\hat{E}_j}|
                                                              + \int_{\partial B_{\rho_k}} |\chisub{E_j} - \chisub{\hat{E}_j}| \right) 										+ C\sum_{j=0}^2|\Delta V_j|\\
                           &\leq \epsilon + \newnu_V (\{E_j\}, \rho) 
                                          + \sum_{j = 0}^2 \alpha_j \left(\int_{B_R \setminus B_{\rho_k}} |D\chisub{\hat{E}_j}|
                                                                        + \int_{\partial B_{\rho_k}} |\chisub{E_j} - \chisub{\hat{E}_j}| \right) 
									+ C\sum_{j=0}^2|\Delta V_j|\\
                           &\rightarrow \epsilon + \newnu_V (\{E_j\}, \rho) 
             + \sum_{j = 0}^2 \alpha_j \left( \int_{\partial B_{\rho}} |\chisub{E_j}^{-} - \chisub{\hat{E}_j}^{-}| \right) + C\sum_{j=0}^2|\Delta V_j|\;.
\end{alignat*}
Now by using the fact that $\epsilon > 0$ is arbitrary and by the symmetry of the equation
that we are trying to prove, we are done.
\end{proof}

\begin{lemma}[Analogue of Lemma 9.1 of \cite{G}] \label{lemma:MC}
Let $\Omega\subset\R^n$ be open, let $\{E_{j,k}\}$ be a sequence of sets that DV-minimize $\mathcal{F}_S$ over $\Omega$ 
i.e. $\{E_{j,k}\}$ are taken such that 
\begin{equation}
   \Psi_V(\{E_{j,k}\},A) = 0\, , \quad \forall A \subset\subset \Omega
\end{equation}
(with potentially different Dirichlet data and volume constraints for each $k$).
Suppose there exists a triple $\{E_j\}$ such that 
\begin{equation}
   \chi_{E_{j,k}}\rightarrow \chi_{E_j} \quad  \mbox{ in } L^1_{loc}(\Omega), \quad j=0,1,2
\label{eq:L1Conv}
\end{equation}
Then $\{E_j\}$ is a DV-minimizer of $\mathcal{F}_S$ over $\Omega$:
\begin{equation}\label{eqn:E_j_min}
   \Psi_V (\{E_j\},A) = 0 \, , \quad \forall A \subset\subset \Omega.
\end{equation}
Moreover, if $L \subset\subset \Omega$ is any open set such that 
\begin{equation}
   \int_{\partial L}|D\chi_{E_j}| = 0, \quad j = 0,1,2,
\end{equation}
then we have
\begin{equation}
   \lim_{k\rightarrow\infty} \mathcal{F}_S(\{E_{j,k}\},L) = \mathcal{F}_S(\{E_j\},L).
\end{equation}
\end{lemma}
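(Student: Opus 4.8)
The plan is to carry over the argument of Lemma~9.1 of \cite{G} to the weighted three-fluid energy. Lower semicontinuity of total variation under $L^1_{loc}$-convergence already gives, from \eqref{eq:L1Conv}, that $\mathcal{F}_S(\{E_j\},A)\le\liminf_k\mathcal{F}_S(\{E_{j,k}\},A)$ for every open $A\subset\subset\Omega$, so the whole lemma reduces to one comparison estimate: for every ball $B_\rho(x_0)\subset\subset\Omega$ and every competitor $\{\tilde E_j\}$ which is V-permissible for the volume constraints of $\{E_j\}$ and has $\mathrm{spt}(\chi_{\tilde E_j}-\chi_{E_j})\subset\subset B_\rho(x_0)$, one has $\mathcal{F}_S(\{E_j\},B_\rho(x_0))\le\mathcal{F}_S(\{\tilde E_j\},B_\rho(x_0))$. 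Letting $\{\tilde E_j\}$ range over all such competitors yields $\Psi_V(\{E_j\},B_\rho(x_0))=0$, and then $\Psi_V(\{E_j\},A)=0$ for all $A\subset\subset\Omega$ by the usual covering argument, i.e.\ \eqref{eqn:E_j_min}; taking $\{\tilde E_j\}=\{E_j\}$ in the same estimate will, together with lower semicontinuity, pin down the energies on balls and produce the statement about good sets $L$.

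To prove the comparison estimate, translate so that $x_0=0$ and fix $\rho<r<s<\rho'$ with $\overline{B_{\rho'}}\subset\subset\Omega$. Using the co-area formula and \eqref{eq:L1Conv}, pass to a subsequence $\{k_l\}$ (diagonalizing over an exhausting family of radii) and choose the gluing radius $r$ so that $B_r$ is a good sphere for $\{E_j\}$ and for every $\{E_{j,k}\}$ in the sense of Theorem~\ref{tst} (so all relevant traces on $\partial B_r$ equal the functions themselves $\mathcal{H}^{n-1}$-a.e.) and so that $\sum_j\int_{\partial B_r}|\chi_{E_j}-\chi_{E_{j,k_l}}|\,d\mathcal{H}^{n-1}\to 0$. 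Glue: let $\hat E_{j,k_l}$ equal $\tilde E_j$ in $B_r$ and $E_{j,k_l}$ in $\Omega\setminus B_r$. This is a partition of $\Omega$ agreeing with $\{E_{j,k_l}\}$ off $B_r$; since $\{\tilde E_j\}$ matches $\{E_j\}$ near $\partial B_r$, its volumes differ from those of $\{E_{j,k_l}\}$ by at most $\|\chi_{E_j}-\chi_{E_{j,k_l}}\|_{L^1(B_r)}\to 0$, so Almgren's Volume Adjustment Lemma~\ref{PfVA}, applied near a piece of interface so as not to disturb the configuration outside $B_s$, upgrades $\{\hat E_{j,k_l}\}$ to a genuine V-permissible competitor for $\{E_{j,k_l}\}$ in $B_s$ at extra energy cost $o(1)$ (the degenerate case in which $B_\rho(x_0)$ meets only one fluid makes the comparison estimate trivial). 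Decomposing $\mathcal{F}_S$ across $\partial B_r$ with the two-sided trace identity \eqref{eq:trform}, gluing there costs exactly $\sum_j\alpha_j\int_{\partial B_r}|\chi_{E_j}-\chi_{E_{j,k_l}}|\,d\mathcal{H}^{n-1}$, while outside $\overline{B_r}$ the glued configuration coincides with $\{E_{j,k_l}\}$; using $\Psi_V(\{E_{j,k_l}\},B_s)=0$ and cancelling the common contribution on $B_s\setminus\overline{B_r}$ (legitimate because $r$ is a good radius) gives $\mathcal{F}_S(\{E_{j,k_l}\},B_r)\le\mathcal{F}_S(\{\tilde E_j\},B_r)+o(1)$. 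Letting $l\to\infty$ and invoking lower semicontinuity on $B_r$ yields $\mathcal{F}_S(\{E_j\},B_r)\le\mathcal{F}_S(\{\tilde E_j\},B_r)$; since $\{E_j\}=\{\tilde E_j\}$ on a neighbourhood of $\partial B_\rho$ and on the open annulus $B_r\setminus\overline{B_\rho}$, subtracting those common contributions gives $\mathcal{F}_S(\{E_j\},B_\rho)\le\mathcal{F}_S(\{\tilde E_j\},B_\rho)$.

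For the energy-convergence statement, put $\mu_k:=\sum_j\alpha_j|D\chi_{E_{j,k}}|$ and $\mu:=\sum_j\alpha_j|D\chi_{E_j}|$. A standard comparison, replacing $\{E_{j,k}\}$ inside a ball $B_R(x_0)\subset\subset\Omega$ by flat slabs of the three fluids carrying the required volumes, bounds $\mu_k(B_R(x_0))$ uniformly in $k$, so $\{\mu_k\}$ is weak-$\ast$ precompact on $\Omega$. If $\nu$ is any weak-$\ast$ subsequential limit, re-running the construction above with $\{\tilde E_j\}=\{E_j\}$ along that subsequence, together with lower semicontinuity, gives a further subsequence along which $\mathcal{F}_S(\{E_{j,k}\},B_r(x_0))\to\mathcal{F}_S(\{E_j\},B_r(x_0))$ for a.e.\ $r$ and every $x_0$; since also $\mu_k(B_r(x_0))\to\nu(B_r(x_0))$ for a.e.\ $r$, we get $\nu(B_r(x_0))=\mu(B_r(x_0))$ for a.e.\ $r$ and every $x_0$, and by left-continuity in $r$, together with the fact that Radon measures agreeing on all open balls coincide, this forces $\nu=\mu$. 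Hence $\mu_k\rightharpoonup\mu$ on $\Omega$; and since the hypothesis $\int_{\partial L}|D\chi_{E_j}|=0$ for $j=0,1,2$ says precisely $\mu(\partial L)=0$, we conclude $\mathcal{F}_S(\{E_{j,k}\},L)=\mu_k(L)\to\mu(L)=\mathcal{F}_S(\{E_j\},L)$.

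The main obstacle is the trace mismatch on $\partial B_r$: because \eqref{eq:L1Conv} is only $L^1_{loc}$-convergence, no single sphere works simultaneously for all $k$, which is what forces the ``good radius plus subsequence'' selection, the use of the two-sided trace theorem (Theorem~\ref{tst}), and a small but genuinely nonzero volume correction whose cost must be shown to vanish. A secondary point is that the volume constraints are allowed to vary with $k$ (although, since $\Omega$ is bounded, they in fact converge), so Lemma~\ref{G5.6} cannot be quoted verbatim and the gluing-and-adjustment has to be carried out by hand, with care that Almgren's correction (Lemma~\ref{PfVA}) is localized away from $\partial B_r$; one also has to pass from balls of arbitrary center to general $A\subset\subset\Omega$ by the usual covering argument.
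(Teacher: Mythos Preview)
Your proposal is correct and follows essentially the same route as the paper---gluing a competitor to $\{E_{j,k}\}$ across a ``good'' hypersurface selected via coarea so that the trace mismatch vanishes, absorbing the $o(1)$ volume defect with Almgren's Lemma~\ref{PfVA}, and then combining the resulting upper bound with lower semicontinuity. Two small points of comparison are worth flagging. First, the paper works directly with an arbitrary $A\subset\subset\Omega$ (smooth boundary) and its $t$-neighborhoods $A_t$, invoking Lemma~\ref{G5.6} for the $\newnu_V$-comparison; this sidesteps your ``usual covering argument,'' which for $\Psi_V$ is not entirely trivial since a competitor supported in a general $A$ need not sit inside a single ball contained in $\Omega$---the clean fix is simply to rerun your gluing argument with $A$ and $A_t$ in place of $B_\rho$ and $B_r$. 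Second, for the energy convergence on $L$ the paper uses a subsequence-of-subsequence argument together with Theorem~\ref{DT2}, whereas you identify the weak-$\ast$ limit of the measures $\mu_k=\sum_j\alpha_j|D\chi_{E_{j,k}}|$ and then read off convergence on sets with $\mu(\partial L)=0$; both arguments are standard and yours is arguably tidier.
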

\begin{remark}[Weakness of some of the hypotheses]  \label{WeakHyp}
Equation\refeqn{L1Conv}can be guaranteed by Helly's Selection Theorem as long as all of the configurations
have uniformly bounded energy.
\end{remark}
\begin{proof}
Let $A\subset\subset \Omega$.  We may suppose that $\partial A$ is smooth, so that for every $k$:
\begin{equation}
   \mathcal{F}_S(\{E_{j,k}\},A) \leq \left(\mathcal{H}^{n-1}(\partial A) 
                      + \frac{1}{2} \omega_{n-1} \left(\frac{\text{ max diam of } A}{2}\right)^{n-1}\right)
                          \sum_{j=0}^2 \alpha_j,
\end{equation}
which follows by covering $\partial A$ with all three values, and bounding the minimal  energy of
$\{E_j\}$ by a (standard) competitor on a possibly larger domain.
Then lower semicontinuity implies the same inequality holds with $\{E_{j,k}\}$ replaced with $\{E_j\}$.

For $t>0$, let
\begin{equation}
   A_t = \{x\in\Omega : \text{dist}(x,A) < t\}.
\end{equation}
We have
\begin{equation}
   \lim_{k\rightarrow\infty} \int_{A_t} |\chi_{E_{j,k}} - \chi_{E_j}|\, dx = 0, \quad j = 0,1,2
\end{equation}
and therefore there exists a subsequence $\{E_{j,k_i}\}$ such that for almost every $t$ close to 0
\begin{equation}
   \lim_{k\rightarrow\infty}\int_{\partial A_t} |\chi_{E_{j,k_i}} - \chi_{E_j}|\, d\mathcal{H}^{n-1} = 0, \quad j = 0,1,2.
\end{equation}
From Lemma~\ref{G5.6}, as $\sum_{j=0}^2|\Delta V_{j,k}|\rightarrow 0$, we have for those $t$
\begin{equation}
   \lim_{k\rightarrow\infty} \newnu_V (\{E_{j,k_i},A_t) = \newnu_V (\{E_j,A_t)
\end{equation}
and by lower semicontinuity
\begin{equation}
   \Psi_V (\{E_j\},A_t) = 0.
\end{equation}
Thus \eqref{eqn:E_j_min} holds.  Now let $L\subset\subset \Omega$ be such that
$\int_{\partial L} |D\chi_{E_j}| = 0$ for $j= 0,1,2$, and let $A$ be a smooth open
set such that $L\subset\subset A\subset\subset \Omega$.  Let $\{F_{j,k}\}$ be any
subsequence of $\{E_{j,k}\}$.  Repeating the same argument as above, there is a
set $A_t$ and a subsequence $\{F_{j,k_i}\}$  such that 
\begin{equation}
    \lim_{k\rightarrow\infty}\newnu_V (\{F_{j,k_i}\},A_t) = \newnu_V (\{F_j\},A_t).
\end{equation}
Since $\Psi_V (\{F_{j,k_i}\},A_t) = \Psi_V (\{E_j\},A_t) = 0$ we have 
\begin{equation}
    \lim_{k\rightarrow\infty}\mathcal{F}_S(\{F_{j,k_i}\},A_t) = \mathcal{F}_S(\{F_j\},A_t),
\end{equation}
thus from Theorem~\ref{DT2} (with $A_t$ playing the role of $\Omega$)
\begin{equation}
    \lim_{k\rightarrow\infty} \mathcal{F}_S(\{F_{j,k}\},L) = \mathcal{F}_S(\{E_j\},L),
\end{equation}
completing the proof.
\end{proof}

\begin{figure}[!h]
	\centering
	\scalebox{.55}{\includegraphics{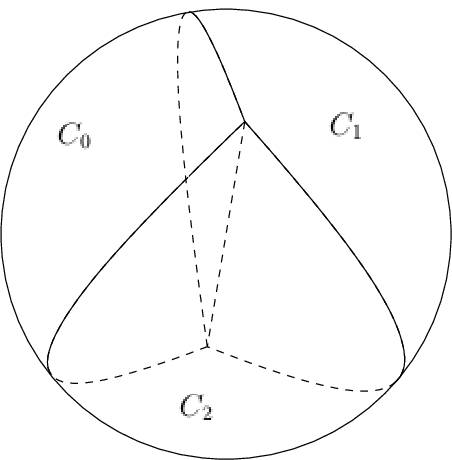}}
	\caption{Limiting configuration as cones.}
	\label{fig:Cones}
\end{figure}
\begin{theorem}[Analogue of Theorem 9.3 of \cite{G}]   \label{thm:cones}
Suppose $\{E_j\}$ is a V-minimizer of $\mathcal{F}_S$ in $B_1$,
that is $\Psi_V (\{E_j\},B_1)=0$, such that $0\in\partial E_0 \cap \partial E_1 \cap \partial E_2$.  For each $t>0$, let
\begin{equation}
      E_{j,t} = \{ x \in \R^n : \mathit{tx} \in \mathit{E_j} \}, \quad j = 0,1,2.
\end{equation}
Then for every sequence $\{t_i\}$ tending to zero there exists a subsequence $\{s_i\}$
such that $E_{j,s_i}$ converges locally in $\R^n$ to permissible sets $C_j$. 
Moreover, $\{C_j\}$ are cones with positive density at the origin (the vertex of the cones) satisfying 
\begin{equation}
     \Psi_V (\{C_j\},A) = 0 \quad \forall A \subset\subset \R^n.
\end{equation}
\end{theorem}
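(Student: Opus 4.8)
The plan is to follow the scheme of Giusti's Theorem~9.3 (the result is billed as its analogue), with the monotonicity formula of Section~\ref{MSE1} replacing the classical one. First I would note that the hypothesis $\Psi_V(\{E_j\},B_1)=0$ also gives $\Psi_V(\{E_j\},B_t)=0$ for every $0<t<1$ (interior modifications in $B_t$ are a subclass of those in $B_1$), so the Corollary to the Part~I monotonicity theorem applies to $\{E_j\}$: the function $g(t):=t^{1-n}\mathcal{F}_S(\{E_j\},B_t)+Ct^2$ is nondecreasing on $(0,1)$ and nonnegative, hence $\Theta_0:=\lim_{t\to 0^+}g(t)=\lim_{t\to 0^+}t^{1-n}\mathcal{F}_S(\{E_j\},B_t)$ exists and is finite. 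The change of variables $y=tx$ gives $\mathcal{F}_S(\{E_{j,t}\},B_R)=t^{1-n}\mathcal{F}_S(\{E_j\},B_{tR})=R^{n-1}\big(g(tR)-C(tR)^2\big)$, which stays bounded as $t\downarrow 0$; together with the obvious $L^1$ bound this lets me invoke Helly's Selection Theorem (Theorem~\ref{Helly}) to extract, from any $t_i\downarrow 0$, a subsequence $s_i\downarrow 0$ with $\chisub{E_{j,s_i}}\to\chisub{C_j}$ in $L^1_{loc}(\R^n)$ and $D\chisub{E_{j,s_i}}\stackrel{\ast}{\rightharpoonup}D\chisub{C_j}$ for each $j$. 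Lower semicontinuity and the identity $\sum_j\chisub{E_{j,s_i}}\equiv 1$ then give that the $C_j$ have locally finite perimeter with $\sum_j\chisub{C_j}\equiv 1$, i.e. (passing to open representatives) that $\{C_j\}$ is a permissible triple on $\R^n$.

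Next I would identify $\{C_j\}$ as a minimizer. Each $\{E_{j,s_i}\}$ is obtained from the $B_1$-minimizer $\{E_j\}$ by restriction and rescaling, hence (Remark~\ref{ResRes}) satisfies $\Psi_V(\{E_{j,s_i}\},A)=0$ for every $A\subset\subset B_{1/s_i}$; since any fixed $A\subset\subset\R^n$ lies in $B_{1/s_i}$ for $i$ large, the local version of the argument of Lemma~\ref{lemma:MC} applies and yields $\Psi_V(\{C_j\},A)=0$ for all $A\subset\subset\R^n$, together with the energy convergence $\mathcal{F}_S(\{E_{j,s_i}\},L)\to\mathcal{F}_S(\{C_j\},L)$ whenever $\int_{\partial L}|D\chisub{C_j}|=0$. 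For the positive-density claim I would apply Leonardi's Elimination Theorem (Theorem~\ref{LET}) to $\{E_j\}$ at the origin: since $0\in\partial E_i$, its contrapositive gives $|E_i\cap B_\rho|>\eta\rho^n$ for all small $\rho$; rescaling this and passing to the $L^1_{loc}$ limit gives $|C_i\cap B_a|\ge\eta a^n$ for all $a>0$ and every $i$, which is the asserted positive density and forces $0\in\partial C_0\cap\partial C_1\cap\partial C_2$.

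The heart of the proof is showing each $C_j$ is a cone. Combining the scaling identity $r^{1-n}\mathcal{F}_S(\{E_{j,s_i}\},B_r)=(s_ir)^{1-n}\mathcal{F}_S(\{E_j\},B_{s_ir})$ with the energy convergence from Lemma~\ref{lemma:MC} and with $s_ir\downarrow 0$, I obtain $r^{1-n}\mathcal{F}_S(\{C_j\},B_r)=\Theta_0$ for a.e. $r>0$; thus the scaled energy of $\{C_j\}$ is \emph{exactly} constant. For each $\lambda>0$ the dilated triple $\{C_{j,\lambda}\}$ is again a DV-minimizer (scaling preserves $\Psi_V=0$) with the same constant scaled energy $\Theta_0$, so the Corollary of the Part~I formula \eqref{eqn:Weakmono}, applied to $\{C_{j,\lambda}\}$ on $B_\rho\subset B_r$, collapses to $\sum_{j}\tfrac{\alpha_j}{8}\int_{B_r\setminus B_\rho}|x|^{1-n}\big\langle\tfrac{x}{|x|},\tfrac{D\chisub{C_{j,\lambda}}}{|D\chisub{C_{j,\lambda}}|}\big\rangle^4|D\chisub{C_{j,\lambda}}|\le C(r^2-\rho^2)$. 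A direct change of variables shows this ``defect'' integral is dilation invariant, equal to the corresponding integral for $\{C_j\}$ over $B_{\lambda r}\setminus B_{\lambda\rho}$; hence for any fixed annulus $B_b\setminus B_a$, choosing $\lambda r=b$, $\lambda\rho=a$ bounds it by $C(b^2-a^2)/\lambda^2\to 0$ as $\lambda\to\infty$. Therefore the defect integral for each $C_j$ vanishes on every annulus, i.e. $\big\langle x,D\chisub{C_j}\big\rangle=0$ as measures; equivalently each $\chisub{C_j}$ is invariant under dilations, so $\{C_j\}$ is a configuration of cones with vertex at $0$. (Alternatively, once one verifies that the Lagrange multiplier of the blow-up vanishes so that $\{C_j\}$ is stationary for the unconstrained $\mathcal{F}_S$, Corollary~\ref{VSMC} and the sharp identity~\eqref{eq:Mono} yield the same conclusion directly, the left side of~\eqref{eq:Mono} being then identically zero.)

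The main obstacle is the passage from the monotonicity \emph{inequality}, with its error term $C(r^2-\rho^2)$, to the rigidity statement that the defect vanishes: this is exactly where the exact constancy of the scaled energy of $\{C_j\}$ (not merely its monotonicity) must be exploited, via the dilation invariance of the defect integral. Establishing that exact constancy in turn hinges on interchanging the blow-up limit with the radial-energy limit, which is what the energy-convergence statement in Lemma~\ref{lemma:MC} is designed to supply; and one must be careful to use that lemma in its local form, since the rescaled configurations are only known to minimize on the exhausting balls $B_{1/s_i}$, not on a fixed domain.
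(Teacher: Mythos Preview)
Your proposal follows the paper's scheme for compactness (Helly on the uniformly energy-bounded rescalings, then a diagonal argument), minimality of the limit (Lemma~\ref{lemma:MC}), positive density (Elimination Theorem), and constancy of the scaled energy of $\{C_j\}$; the paper reaches that constancy by a squeeze argument with the monotone $p(t)$, but your direct passage through $\Theta_0:=\lim_{t\to0}g(t)$ together with the energy convergence from Lemma~\ref{lemma:MC} is equivalent. The one genuine difference is the final cone step. The paper invokes Lemma~\ref{G5.3} together with the monotonicity inequality to write
\[
\sum_j\alpha_j\int_{B_r\setminus B_\rho}\Big|\Big\langle \tfrac{x}{|x|^n},D\chi_{C_j}\Big\rangle\Big|\ \le\ r^{1-n}\mathcal{F}_S(\{C_j\},B_r)-\rho^{1-n}\mathcal{F}_S(\{C_j\},B_\rho)=0,
\]
tacitly suppressing the Almgren error $C(r^2-\rho^2)$ from~\eqref{eqn:Weakmono}. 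You instead bound the fourth-power defect for the dilated triple $\{C_{j,\lambda}\}$ on $B_r\setminus B_\rho$ by $C(r^2-\rho^2)$, identify it (by scale-invariance of the integrand $|x|^{1-n}\langle\,\cdot\,\rangle^4\,|D\chi|$) with the defect for $\{C_j\}$ on $B_{\lambda r}\setminus B_{\lambda\rho}$, and send $\lambda\to\infty$. That is a legitimate route which makes the treatment of the volume-constraint error explicit rather than silent; the one detail to tidy is that the Almgren constant in the Corollary depends on the configuration and, since volumes scale like $\lambda^{-n}$ while perimeter scales like $\lambda^{1-n}$, it rescales as $C_\lambda=\lambda C_0$ under dilation by $1/\lambda$, so your bound is really $C_0(b^2-a^2)/\lambda$, which still tends to $0$. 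Your parenthetical alternative via the sharp identity~\eqref{eq:Mono} would also work, but it requires first arguing that blow-ups of V-minimizers are stationary for the unconstrained $\mathcal{F}_S$, which you have not done.
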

See Figure~\ref{fig:Cones}.  
Note that this is a tiny bit more than an analogue of Theorem 9.3 of \cite{G} because we can use the Elimination
Theorem to get the statement about the positive density of the cones at the origin.
In light of this result, we define a triple $\{E_j\}$ to V-minimize $\mathcal{F}_S$ over
$\R^n$ if 
\begin{equation}
\Psi_V (\{E_j\},A) = 0 \quad \forall A \subset\subset \R^n.
\end{equation}

\begin{proof}
Let $t_i\rightarrow 0$.  The first step is to show that for every $R>0$ there exists a subsequence
$\{\sigma_i \}$ such that $\{E_{j,\sigma_i}\}$ converges in $B_R$.  We have
\begin{equation}
   \mathcal{F}_S(\{E_{j,t}\},B_R) = t^{1-n}\mathcal{F}_S(\{E_j\},B_{Rt})
\end{equation}
and so choosing $t$ sufficiently small  (so that $Rt<1$) we have that $E_{j,t}$ is a V-minimizer of $\mathcal{F}_S$ over $B_R$ and 
\begin{equation}\label{eqn:eng_bnd}
        \mathcal{F}_S(\{E_{j,t}\},B_R) = t^{1-n}\mathcal{F}_S(\{E_j\},B_{Rt}) 
   <  \left(\mathcal{H}^{n-1}(\partial B_1) + \frac{1}{2} \omega_{n-1} \right) R^{n-1} \sum_{j=0}^2 \alpha_j.
\end{equation}
Hence, by Helly's Selection Theorem (see Theorem\refthm{Helly}\!), a subsequence $\{E_{j,\sigma_i}\}$
converges to the triple of sets $\{C_{j,R}\}$ in $B_R$.  Taking a sequence
$R_t\rightarrow \infty$ we obtain, by a diagonal process, the triple of sets
$\{C_j\}\subseteq\R^n$ and a sequence $\{s_i\}$ such that $\{E_{j,s_i}\} \rightarrow \{C_j\}$
locally.  Now,  applying Lemma~\ref{lemma:MC}, we see that $\{C_j\}$ is a V-minimizer of $\mathcal{F}_S$ over
$\R^n$ in the sense that
\begin{equation}
   \Psi_V(\{C_j\},A) = 0 \quad \forall A \subset\subset \R^n.
\end{equation}
The positive density of the $C_j$ at the origin follows immediately by applying the Elimination Theorem.
(See Theorem \ref{LET}.)  If we assume the opposite, then we can use the Elimination Theorem to show that
$0$ was not a triple point at the outset.
It remains to show that the $C_j$ are cones.

By Lemma~\ref{lemma:MC} we have that, for almost all $R>0$,
\begin{equation}
   \mathcal{F}_S(\{E_{j,s_i}\},B_R) \rightarrow \mathcal{F}_S(\{C_j\},B_R).
\end{equation}
Hence, if we define 
\begin{equation}
   p(t) = t^{1-n}\mathcal{F}_S(\{E_j\},B_t)  + Ct= \mathcal{F}_S(\{E_{j,t}\},B_1) + Ct,
\end{equation}
where $C$ is the constant from Almgren's Volume Adjustment Lemma (see Lemma~\ref{PfVA}), we have,
for almost all $R>0$,
\begin{equation}
   \lim_{i\rightarrow\infty} p(s_iR) = R^{1-n}\mathcal{F}_S(\{C_j\},B_R),
\end{equation}
as $i \rightarrow \infty.$  (We must have $s_i \rightarrow 0$ as $i \rightarrow \infty.$)
Also, from Equation \eqref{eq:Weakmono}, $p(t)$ is increasing in $t.$

If $\rho < R$, then for every $i$ there exists an $m_i>0$ such that 
\begin{equation}
    s_i\rho > s_{i+m_i}R.
\end{equation}
Then
\begin{equation}
    p(s_{i+m_i}R)\leq p(s_i\rho) \leq p(s_iR)
\end{equation}
so that 
\begin{equation}\label{eqn:lim_p}
    \lim_{i\rightarrow\infty} p(s_i\rho) = \lim_{i\rightarrow\infty} p(s_iR) = R^{1-n}\mathcal{F}_S(\{C_j\},B_R)
\end{equation}
Thus we have proved that
\begin{equation}
    \rho^{1-n}\mathcal{F}_S(\{C_j\},B_\rho)
\end{equation}
is independent of $\rho$, and so from Lemma~\ref{G5.3}
we have
\begin{eqnarray}
                 \sum^2_{j=0}\alpha_j\int_{\partial B_1} |\chi^{-}_{C_j}(rx) - \chi^{-}_{C_j}(\rho x)| \; 
                                         d\mathcal{H}^{n-1} 
     &\leq&  \sum^2_{j=0}\alpha_j\int_{B_r \setminus B_\rho} 
                                         \left| \left\langle \frac{x}{|x|^n}, D\chi_{C_j} \right\rangle \right| \\
     &\leq&   r^{1-n}\mathcal{F}_S(\{C_j\},B_r) - \rho^{1-n}\mathcal{F}_S(\{C_j\},B_\rho) \quad\\
     &=&      0
\end{eqnarray}
for almost all $r,\rho>0$.  Hence the sets $C_j$ differ only on a set of measure zero from cones
with vertices at the origin.  
\end{proof}


\section{Tangent Plane to the Blow-Up Sphere}
\label{TP}

\begin{theorem}[See Proposition 9.6 of \cite{G}]
Suppose $\{C_j\}$ are blowup cones resulting from the limit process in Theorem \ref{thm:cones},
and let $x_0\in\partial C_0 \cap \partial C_1 \cap \partial C_2 \setminus \{0\}$.  For $t>0$, let
\begin{equation}
       \{C_{j,t}\} = \{x\in\R^n : \mathit{x_0 + t(x-x_0)\in\{C_j\} } \}. 
\end{equation}
Then there exists a sequence $\{t_i\}$ converging to zero such that $\{C_{j,i}\} := \{C_{j,t_i}\}$ converges to cones $\{Q_j\}$ which are a  V-minimizer of $\mathcal{F}_S$ in $\R^n$.  Moreover $\{Q_j\}$ are cylinders with axes through 0 and $x_0$.
\end{theorem}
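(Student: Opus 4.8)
The plan is to run the blow-up procedure of Section~\ref{MC} once more, this time centered at $x_0$, and then to exploit the fact that $\{C_j\}$ is itself a cone about $0$ to manufacture a one-parameter family of affine symmetries of the rescalings that degenerate, in the limit, into translations in the direction $x_0$. Throughout, write $A_t$ for the affine map $A_t(x)=x_0+t(x-x_0)$, so that $\{C_{j,t}\}=\{A_t^{-1}(C_j)\}$; note that $A_t$ fixes $x_0$.

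First I would extract the blow-up limit. Since $\{C_j\}$ is a V-minimizer of $\mathcal{F}_S$ over $\R^n$ with $\Psi_V(\{C_j\},A)=0$ for every $A\subset\subset\R^n$, the standard competitor bound used in \eqref{eqn:eng_bnd} (with Almgren's Lemma~\ref{PfVA} absorbing the small volume correction) gives $\mathcal{F}_S(\{C_j\},B_r(x_0))\le c\,r^{n-1}$, hence
\[
   \mathcal{F}_S(\{C_{j,t}\},B_R)=t^{1-n}\,\mathcal{F}_S(\{C_j\},B_{Rt}(x_0))\le c\,R^{n-1}
\]
for all small $t$ and each fixed $R$. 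Helly's Selection Theorem (Theorem~\ref{Helly}) together with a diagonal argument over $R\uparrow\infty$ then yields a sequence $t_i\downarrow 0$ with $\chi_{C_{j,t_i}}\to\chi_{Q_j}$ in $L^1_{loc}(\R^n)$. Arguing exactly as in the proof of Theorem~\ref{thm:cones} (after translating $x_0$ to the origin): each $\{C_{j,t}\}$ again satisfies $\Psi_V(\{C_{j,t}\},A)=0$ for all $A\subset\subset\R^n$, so Lemma~\ref{lemma:MC} makes $\{Q_j\}$ a V-minimizer of $\mathcal{F}_S$ over $\R^n$; the Elimination Theorem (Theorem~\ref{LET}) keeps all three sets present near $x_0$, so $x_0\in\partial Q_0\cap\partial Q_1\cap\partial Q_2$; and the constant-density argument via Lemma~\ref{G5.3} forces $\{Q_j\}$ to be a triple of cones with vertex at $x_0$.

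The heart of the matter is showing that $\{Q_j\}$ is invariant under all translations parallel to $x_0$. Because $C_j$ is a cone with vertex $0$, it is invariant under every dilation $\delta_\lambda\colon y\mapsto\lambda y$, $\lambda>0$, and therefore $C_{j,t}$ is invariant under the conjugate map $g_{t,\lambda}:=A_t^{-1}\circ\delta_\lambda\circ A_t$, which a short computation identifies as the affine map
\[
   g_{t,\lambda}(x)=\lambda x+(\lambda-1)\,\frac{1-t}{t}\,x_0 .
\]
Fix $\mu\in\R$ and choose $\lambda=\lambda_i:=1+t_i\mu$ (so $\lambda_i>0$ once $i$ is large). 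Then $\lambda_i\to 1$ and $(\lambda_i-1)\tfrac{1-t_i}{t_i}\,x_0=\mu(1-t_i)x_0\to\mu x_0$, so the maps $g_{t_i,\lambda_i}$ converge uniformly on compact sets to the translation $x\mapsto x+\mu x_0$, while each $C_{j,t_i}$ is invariant under $g_{t_i,\lambda_i}$. A routine triangle-inequality argument, combining $\|\chi_{C_{j,t_i}}-\chi_{Q_j}\|_{L^1(B_R)}\to 0$, the uniform convergence of the $g_{t_i,\lambda_i}$, and the continuity of translation on $L^1$, then shows that $\{Q_j\}$ is invariant under $x\mapsto x+\mu x_0$. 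Since $\mu$ is arbitrary, $\{Q_j\}$ is invariant under the whole line of translations in the direction $x_0$.

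Finally, a set that is both a cone with vertex $x_0$ and invariant under every translation along $v:=x_0\ne 0$ is necessarily a cylinder: writing points as $x_0+x'+sv$ with $x'\perp v$, translation invariance removes the $s$-dependence and the cone property passes to the slice in $v^\perp$, so $Q_j=\bigl(Q_j\cap(x_0+v^\perp)\bigr)\times\R v$ with the slice a cone in $v^\perp$ centered at $x_0$. Hence each $Q_j$ is a cylinder whose axis is the line $x_0+\R v=\R x_0$, which passes through both $0$ and $x_0$, as claimed. I expect the two delicate points to be the non-degeneracy of the limit configuration at $x_0$ (handled by invoking the Elimination Theorem verbatim from the proof of Theorem~\ref{thm:cones}) and the passage to the limit in the symmetry relation $g_{t_i,\lambda_i}(C_{j,t_i})=C_{j,t_i}$, where one must carefully combine the uniform convergence of the maps with only $L^1_{loc}$ convergence of the characteristic functions.
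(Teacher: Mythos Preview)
Your proposal is correct and the first half (extracting the blow-up limit, invoking Lemma~\ref{lemma:MC}, the Elimination Theorem, and the constant-density argument to get cones with vertex $x_0$) matches the paper's proof, which simply cites ``the argument in the proof of Theorem~\ref{thm:cones}'' for this step.

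Where you genuinely diverge is in the proof that $\{Q_j\}$ are cylinders along $\R x_0$. The paper fixes coordinates so that $x_0=(0,\dots,0,a)$ and works directly with the measure $D\chi_{C_j}$: the cone condition $\langle x,D\chi_{C_j}\rangle=0$ yields $a\,D_n\chi_{C_j}=-\langle x-x_0,D\chi_{C_j}\rangle$, hence the pointwise bound $|D_n\chi_{C_j}|\le\tfrac{|x-x_0|}{|x_0|}|D\chi_{C_j}|$. Rescaling gives $\sum_j\alpha_j\int_{B(x_0,\rho)}|D_n\chi_{C_{j,t}}|\le C\rho^n t/|x_0|$, so $D_n\chi_{Q_j}=0$, and Theorem~\ref{thm:giusti2.4} then forces $Q_j=A_j\times\R$. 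Your route is instead a symmetry argument: you conjugate the dilation invariance of $C_j$ through $A_t$ to obtain the exact affine symmetries $g_{t,\lambda}(x)=\lambda x+(\lambda-1)\tfrac{1-t}{t}x_0$ of $C_{j,t}$, tune $\lambda_i=1+t_i\mu$ so that these converge to the translation by $\mu x_0$, and pass the symmetry to the $L^1_{loc}$ limit. The paper's approach is shorter and quantitative (it gives the rate $O(t)$), and it plugs directly into the BV machinery already set up (Theorem~\ref{thm:giusti2.4}); your approach is more geometric, avoids those auxiliary BV lemmas, and would transfer verbatim to other invariance groups. One caution: your ``routine triangle-inequality argument'' for passing the symmetry to the limit is correct but not entirely trivial, since you must combine uniform convergence of $g_{t_i,\lambda_i}$ with only $L^1_{loc}$ convergence of the $\chi_{C_{j,t_i}}$; the clean way is to insert $\chi_{Q_j}\circ g_{t_i,\lambda_i}^{-1}$ and use continuity of $L^1$ under affine maps converging to a translation, together with the bounded Jacobians $\lambda_i^n\to 1$.
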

\begin{figure}[!h]
	\centering
	\scalebox{.55}{\includegraphics{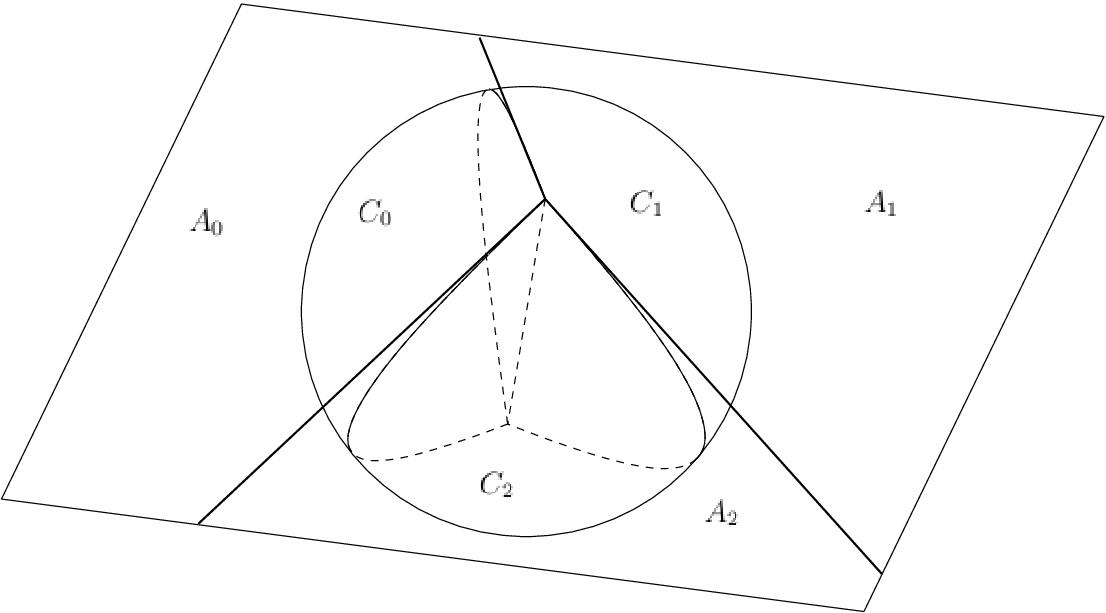}}
	\caption{Cones in the tangent plane to the blow up sphere.}
	\label{fig:ConesTangent}
\end{figure}
\begin{remark}[Existence of isolated triple points]  \label{EoITP}
It is not clear that we need to assume that a point such as $x_0$ exists in dimension 3.  Indeed, in dimension 3, we
conjecture that if there is a triple point in a minimal configuration of cones, then there will be a full line of these
triple points.  
\end{remark}
\begin{proof}
We may assume $x_0 = (0,0,...,0,a), a\ne 0$.  We have
\begin{equation}
     \chi_{C_{j,t}}(x) = \chi_{C_j}(x_0 + t(x-x_0))
\end{equation}
and so
\begin{equation}
      \mathcal{F}_S(\{C_{j,t}\},B(x_0,\rho) )= t^{1-n}\mathcal{F}_S(\{C_j\},B(x_0,\rho t) )
   = \rho^{n-1}\mathcal{F}_S(\{C_j\},B(x_0,1))
\end{equation}
The argument in the proof of Theorem~\ref{thm:cones} implies the existence of a sequence $\{t_i\}$ converging to 0 such that $\{C_{j,i}\}$ converges to cones $\{Q_j\},$ each with a vertex at $x_0$, and that V-minimize $\mathcal{F}_S$ over $\R^n$.

It remains to prove that $\{Q_j\}$ are cylinders with axes through 0 and $x_0$.  This is equivalent to the existence of sets $\{A_j\}\subseteq\R^{n-1}$ such that $\{Q_j\} = \{A_j\}\times\R$.  
Because the $\{C_j\}$ are all cones with vertex at $0,$ we have $\langle x, D\chi_{C_j} \rangle = 0$ and hence
\begin{equation}
     aD_n\chi_{C_j} = -\langle x-x_0, D\chi_{C_j}\rangle.
\end{equation}
Thus
\begin{equation}
     |D_n\chi_{C_j}| \leq \frac{|x-x_0|}{|x_0|}|D\chi_{C_j}|
\end{equation}
and then
\begin{eqnarray}
      \sum_{j = 0}^2 \alpha_j \int_{B(x_0,\rho)} |D_n\chi_{C_{j,t}}| 
    &=& t^{1-n} \sum_{j = 0}^2 \alpha_j \int_{B(x_0,\rho t)} |D_n\chi_{C_{j}}| \\
    &\leq& \frac{t^{2-n} \rho}{|x_0|} \sum_{j = 0}^2 \alpha_j \int_{B(x_0,\rho t)} |D\chi_{C_{j}}| \\
    &=& \frac{t^{2-n}\rho}{|x_0|}\mathcal{F}_S\left(\{C_j\},B(x_0,\rho t)\right) \\
    &\leq& C\frac{\rho^n t}{|x_0|} \, .
\end{eqnarray}
Thus
\begin{equation}
D_n\chi_{Q_j} = \lim_{i\rightarrow\infty} D_n\chi_{C_{j,t_i}} = 0, \qquad j=0,1,2.
\end{equation}

However, for almost all $s<t$, by Theorem~\ref{thm:giusti2.4},
\begin{equation}
           \int_{\mathcal{B}_R}|\chi_{Q_{j,s}} - \chi_{Q_{j,t}}| \, d\mathcal{H}^{n-1} 
    \leq \int_{\mathcal{B}_R\times(s,t)} |D_n\chi_{Q_j}| = 0
\end{equation}
where $\chi_{Q_{j,r}}(y) = \chi_{Q_j}(y,r)$.  This implies the existence of sets
$A_j\subseteq \R^{n-1}$ such that for almost all $r$ and $s$ we have
\begin{equation}
      \chi_{Q_j}(y,s) = \chi_{Q_j}(y,r) = \chi_{A_j}(y)
\end{equation}
for $j=0,1,2$ and almost all $y\in\R^{n-1}$.  Thus
\begin{equation}
          Q_j = A_j\times\R, \quad j = 0,1,2.
\end{equation}
\end{proof}

Since $\{Q_j\}$ are cones, then for each $t>0$, $(y,s)\in\R^{n-1}\times \R$
\begin{equation}
\chi_{A_j}(ty) = \chi_{Q_j}(ty,ts) = \chi_{Q_j}(y,s) = \chi_{A_j}(y), \quad j = 0,1,2,
\end{equation}
which implies $\{A_j\}$ are also cones.  We consider the case where $x_0 \in \partial B_1.$
Then we get a blow up limit in the tangent plane at that point.  We now turn to
the task of classifying the behavior in this tangent plane.

\begin{figure}[!h]
	\centering
	\scalebox{.7}{\includegraphics{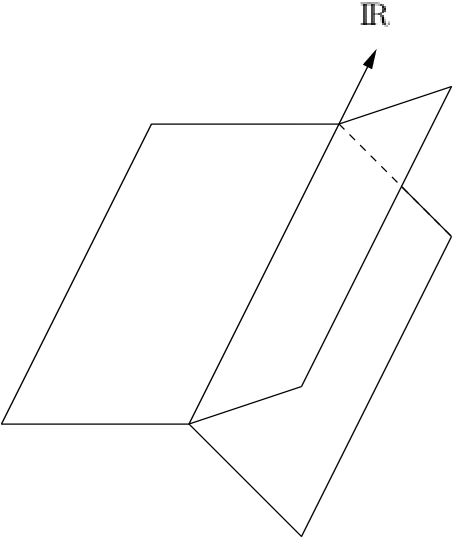}}
	\caption{Cylinders in the second blow up limit.}
	\label{fig:Cylinders}
\end{figure}
\begin{theorem}
Suppose $\{Q_j\} = \{A_j\} \times \R$ are V-permissible cylinders in $\R^n = \R^{n-1}\times\R$.
If $\{Q_j\}$ is a V-minimizer of $\mathcal{F}_S$ in $\R^n$ then $\{A_j\}$ is a V-minimizer of $\mathcal{F}_S$ in $\R^{n-1}.$
If we remove all of the volume constraints in the previous statements then the result still holds.
\end{theorem}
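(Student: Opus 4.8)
The plan is to run the classical dimension-reduction argument by contradiction: a competitor for $\{A_j\}$ in $\R^{n-1}$ that beat the cross-sectional energy would, after being ``extruded'' over a sufficiently tall slab and capped off, produce a competitor for $\{Q_j\}$ in $\R^n$ that beats $\mathcal{F}_S$, contradicting minimality. The engine is the product identity: since $\chi_{Q_j}(y,z)=\chi_{A_j}(y)$ is independent of $z$, we have $D\chi_{Q_j}=D\chi_{A_j}\times\mathcal{L}^1$ with vanishing last component, hence $|D\chi_{Q_j}|=|D\chi_{A_j}|\times\mathcal{L}^1$, so that
\[
   \mathcal{F}_S(\{Q_j\},U\times(a,b)) = (b-a)\,\mathcal{F}_S(\{A_j\},U)
\]
for every bounded open $U\subseteq\R^{n-1}$ and every interval $(a,b)$. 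In particular $\mathcal{F}_S(\{A_j\},\tilde{\mathcal B}_R)<\infty$ for all $R$, and since the $A_j$ are disjoint with $\bigcup_j\overline{A_j}=\R^{n-1}$, the triple $\{A_j\}$ is (V-)permissible in $\R^{n-1}$, so the assertion is meaningful.

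Suppose, for contradiction, that $\{A_j\}$ is not a V-minimizer of $\mathcal{F}_S$ in $\R^{n-1}$. Then there exist a ball $\tilde{\mathcal B}_R\subset\R^{n-1}$ (chosen so that $\partial\tilde{\mathcal B}_R$ carries no $|D\chi_{A_j}|$-mass), a V-permissible triple $\{\tilde A_j\}$ with $\mathrm{spt}(\chi_{\tilde A_j}-\chi_{A_j})\subset\subset\tilde{\mathcal B}_R$, and a number $\gamma>0$ with $\mathcal{F}_S(\{\tilde A_j\},\tilde{\mathcal B}_R)=\mathcal{F}_S(\{A_j\},\tilde{\mathcal B}_R)-\gamma$. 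For a height $L\ge 1$ I would define the competitor $\{\hat Q_j\}$ in $\R^n$ by replacing, inside the slab $W_L:=\tilde{\mathcal B}_R\times(-L,L)$, the cylindrical pattern $A_j$ by the improved pattern $\tilde A_j$, and keeping $\{\hat Q_j\}=\{Q_j\}$ elsewhere; explicitly $\hat Q_j=\big((\tilde A_j\cap\tilde{\mathcal B}_R)\times(-L,L)\big)\cup\big(Q_j\setminus W_L\big)$. Because $\tilde A_j$ and $A_j$ coincide near $\partial\tilde{\mathcal B}_R$, the $\hat Q_j$ fit together into a permissible triple agreeing with $\{Q_j\}$ outside the compact set $\overline{W_L}$; and V-permissibility of $\{\tilde A_j\}$ forces $|\tilde A_j\cap\tilde{\mathcal B}_R|=|A_j\cap\tilde{\mathcal B}_R|$, so $|\hat Q_j\cap W_L|=2L\,|\tilde A_j\cap\tilde{\mathcal B}_R|=|Q_j\cap W_L|$, i.e.\ $\{\hat Q_j\}$ is admissible for the volume-constrained problem.

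Next I would compare energies on any ball $B_M\supseteq\overline{W_L}$. Since the two configurations agree off $W_L$, only the energy on $\overline{W_L}$ matters; there the product identity gives $\mathcal{F}_S(\{Q_j\},\overline{W_L})=2L\,\mathcal{F}_S(\{A_j\},\tilde{\mathcal B}_R)$, while the energy of $\{\hat Q_j\}$ on $\overline{W_L}$ is the cylindrical part $2L\,\mathcal{F}_S(\{\tilde A_j\},\tilde{\mathcal B}_R)$ plus the two ``caps'' lying in $\tilde{\mathcal B}_R\times\{\pm L\}$ across which $\tilde A_j$ jumps back to $A_j$; each cap contributes at most $\big(\sum_j\alpha_j\big)\omega_{n-1}R^{n-1}=:\Lambda_0$, a constant not depending on $L$. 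Hence
\[
   \mathcal{F}_S(\{\hat Q_j\},B_M)-\mathcal{F}_S(\{Q_j\},B_M)\ \le\ -2L\gamma+2\Lambda_0\ <\ 0
\]
once $L>\Lambda_0/\gamma$, contradicting $\Psi_V(\{Q_j\},B_M)=0$. Thus $\{A_j\}$ is a V-minimizer of $\mathcal{F}_S$ in $\R^{n-1}$.

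The version with all volume constraints removed follows from the verbatim argument with $\{\tilde A_j\}$ merely permissible and $\Psi_V,\newnu_V$ replaced by $\Psi,\newnu$, the volume bookkeeping simply being discarded (alternatively, Almgren's Volume Adjustment Lemma, Lemma~\ref{PfVA}, repairs any discrepancy at cost $C\sum_j|\Delta V_j|$, again bounded uniformly in $L$). The only delicate point, as in every dimension-reduction argument of this kind, is the ``cost of capping off'': one must check that reconnecting the improved cross-section to the untouched cylinder costs only $O(1)$ in energy while the gain is linear in $L$, and that the object produced is a genuine volume-preserving partition of $\R^n$. With the product identity in hand this is bookkeeping rather than a new idea, but it is where the care lies.
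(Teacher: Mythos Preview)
Your proof is correct and follows essentially the same approach as the paper's: both argue by contradiction, extrude a lower-energy cross-sectional competitor over a slab of height $2L$ (the paper uses $2T$), bound the two cap contributions by $2\omega_{n-1}R^{n-1}\sum_j\alpha_j$, and note that the linear-in-$L$ gain eventually dominates this fixed cost. Your version is slightly more explicit about volume bookkeeping for the extruded competitor, but the argument is the same.
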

See Figure~\ref{fig:Cylinders}.\\
\begin{proof}
Without the volume constraints the proof only becomes simpler, so it suffices to
prove the statements where we include the volume restrictions.
Suppose $\{Q_j\}$ is a V-minimizer of $\mathcal{F}_S$ in $\R^n$.  If $\{A_j\}$ is not a V-minimizer of $\mathcal{F}_S$ in $\R^{n-1}$,
then there exists $\epsilon>0$, $R>0$, and sets $\{E_j\}$ coinciding with $\{A_j\}$ 
outside some compact set $H\subseteq \tilde{\mathcal{B}}_R$ such that 
\begin{equation}
     \mathcal{F}_S(\{E_j\},\tilde{\mathcal{B}}_R) \leq \mathcal{F}_S(\{A_j\},\tilde{\mathcal{B}}_R) - \epsilon.
\end{equation}
Let $T>0$ and set
\begin{equation}
     M_j = \begin{cases}
     E_j\times(-T,T) & \text{in } |x_n|<T \\
     Q_j & \text{outside } |x_n|<T
               \end{cases}
\end{equation}
for $j= 0,1,2$, giving $\{M_j\} = \{Q_j\}$ outside $H\times[-T,T]$.  Hence
\begin{equation}\label{eqn:Q<M}
            \mathcal{F}_S\left(\{Q_j\},\tilde{\mathcal{B}}_R\times[-T,T]\right) 
     \leq \mathcal{F}_S\left(\{M_j\},\tilde{\mathcal{B}}_R\times[-T,T]\right).
\end{equation}
However, we have
\begin{equation}
              \mathcal{F}_S\left(\{Q_j\},\tilde{\mathcal{B}}_R\times[-T,T]\right) 
      = 2T\mathcal{F}_S\left(\{A_j\},\tilde{\mathcal{B}}_R\right)
\end{equation}
and
\begin{eqnarray}
                 \mathcal{F}_S\left(\{M_j\},\tilde{\mathcal{B}}_R\times[-T,T]\right)
     &\leq& 2T\mathcal{F}_S\left(\{E_j\},\tilde{\mathcal{B}}_R\right) + 2\omega_{n-1}R^{n-1}\sum_{j=0}^2\alpha_j\\
     &\leq& 2T\mathcal{F}_S\left(\{A_j\},\tilde{\mathcal{B}}_R\right) - 2T\epsilon + 2\omega_{n-1}R^{n-1}\sum_{j=0}^2\alpha_j
\end{eqnarray}
This contradicts \eqref{eqn:Q<M} for sufficiently large $T$, say, $T>\frac{\omega_{n-1}}{\epsilon}R^{n-1}\sum_{j=0}^2\alpha_j$.
\end{proof}
Note that at this point we have proven everything in Theorem\refthm{mainresult}except the angle condition.

If we weaken our definition of minimality by abandoning the volume constraint again, then we are able to prove the converse.  We expect it is true with the volume constraint, but Figure~\ref{fig:thm84} illustrates the difficulty in generalizing the following proof.
Namely, the volume constraint could be satisfied globally, while individual slices did not preserve the induced
($n-1$)-dimensional volume constraints.
\begin{figure}[!h]
	\centering
	\scalebox{.6}{\includegraphics{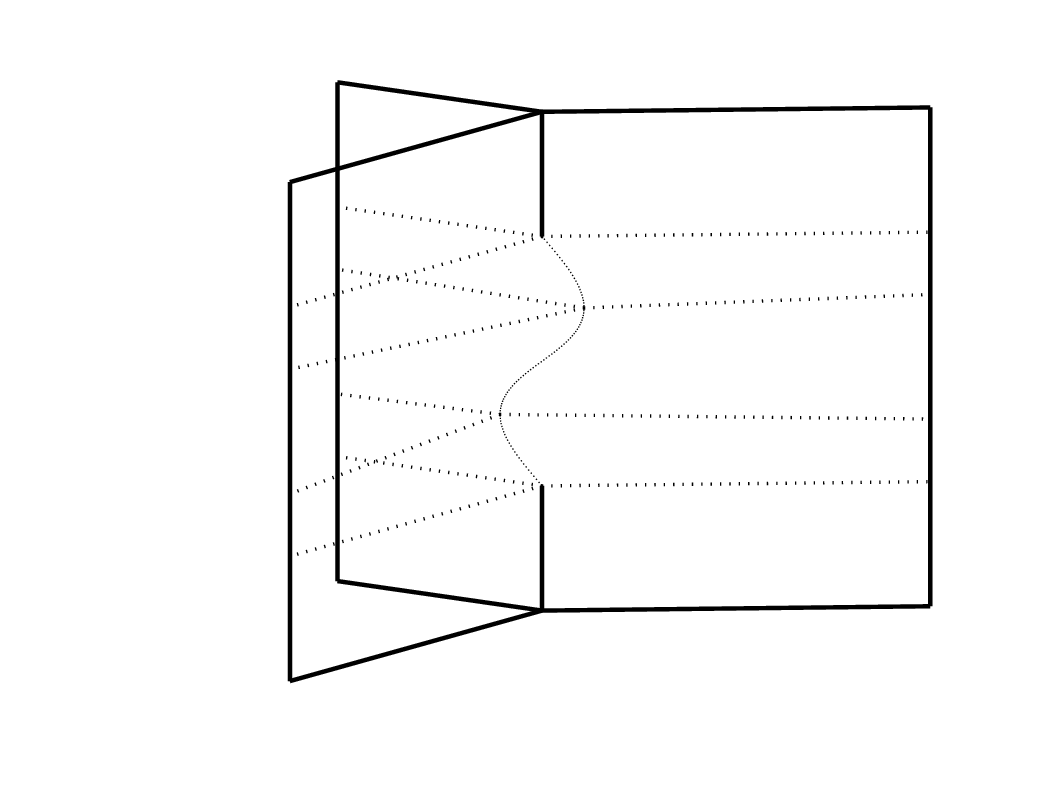}}
	\caption{A visualization of the difficulty in generalizing Theorem~\ref{thm:conemin}.  Perhaps each perpendicular slice is a minimizer in $\R^{n-1}.$}
	\label{fig:thm84}
\end{figure}
\begin{theorem}\label{thm:conemin}
Suppose $\{Q_j\} = \{A_j\}\times\R$ are permissible cylinders in $\R^n = \R^{n-1}\times\R$ without the volume constraint condition.
Then $\{A_j\}$ is minimal in $\R^{n-1}$ implies $\{Q_j\}$ is minimal in $\R^n$.
\end{theorem}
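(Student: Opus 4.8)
The plan is to establish minimality of $\{Q_j\}$ by a slicing argument that exploits the crucial feature of the unconstrained problem: without a volume constraint the minimality of $\{A_j\}$ in $\R^{n-1}$ may be applied to \emph{each} horizontal slice of a competitor separately. So I would start with an arbitrary permissible triple $\{M_j\}$ in $\R^n$ with $\mathrm{spt}(\chi_{M_j}-\chi_{Q_j})$ contained in a compact set $K$; enlarging $K$ we may assume $K\subseteq\tilde{\mathcal{B}}_R\times(-T,T)$ for some $R,T>0$, and — since this fails for only countably many radii — that $\sum_{j=0}^2\int_{\partial\tilde{\mathcal{B}}_R}|D\chi_{A_j}|=0$. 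Writing $\Lambda=\tilde{\mathcal{B}}_R\times[-T,T]$, it then suffices to prove $\mathcal{F}_S(\{Q_j\},\Lambda)\le\mathcal{F}_S(\{M_j\},\Lambda)$, because every compactly supported perturbation of $\{Q_j\}$ can be extended to such an $\{M_j\}$ on a box, the energies outside the box agreeing, so this will give $\Psi(\{Q_j\},A)=0$ for all $A\subset\subset\R^n$.

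For $s\in\R$ write $M_j^s=\{y\in\R^{n-1}:(y,s)\in M_j\}$, so that $Q_j^s=A_j$ for every $s$, and $M_j^s=A_j$ for $y\notin\tilde{\mathcal{B}}_R$ for \emph{every} $s$ (since then $(y,s)\notin K$). By the one-dimensional sectioning theory for $BV$ functions (see, e.g., \cite{AFP}), for a.e.\ $s$ each $\chi_{M_j^s}$ lies in $BV(\R^{n-1})$, and by Fubini (working with the standard measure-theoretic representatives) the triple $\{M_j^s\}$ is permissible in $\R^{n-1}$ for a.e.\ $s$. Applying the minimality of $\{A_j\}$ in $\R^{n-1}$ on $\tilde{\mathcal{B}}_{R'}$ for a generic $R'>R$, and using that $\{M_j^s\}$ coincides with $\{A_j\}$ on the annulus,
\begin{equation}
   \mathcal{F}_S(\{A_j\},\tilde{\mathcal{B}}_R)\leq \mathcal{F}_S(\{M_j^s\},\tilde{\mathcal{B}}_R)\qquad\text{for a.e. }s\in(-T,T).
\end{equation}
Integrating over $s\in(-T,T)$ and using the slicing inequality for total variation, namely $\sum_j\alpha_j\int_{-T}^{T}|D\chi_{M_j^s}|(\tilde{\mathcal{B}}_R)\,ds=\sum_j\alpha_j|D'\chi_{M_j}|(\mathrm{int}\,\Lambda)\le\mathcal{F}_S(\{M_j\},\Lambda)$, where $D'$ is the gradient in the first $n-1$ variables, I obtain
\begin{equation}
   2T\,\mathcal{F}_S(\{A_j\},\tilde{\mathcal{B}}_R)\;\leq\;\int_{-T}^{T}\mathcal{F}_S(\{M_j^s\},\tilde{\mathcal{B}}_R)\,ds\;\leq\;\mathcal{F}_S(\{M_j\},\Lambda).
\end{equation}
Since $\mathcal{F}_S(\{Q_j\},\Lambda)=2T\,\mathcal{F}_S(\{A_j\},\tilde{\mathcal{B}}_R)$, as already noted in the proof of the preceding theorem, this is exactly the inequality we wanted, and letting $\{M_j\}$ vary finishes the argument.

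The bookkeeping steps I expect to be routine: choosing the generic radii $R,R'$ so that no total variation concentrates on $\partial\tilde{\mathcal{B}}_R$ or $\partial\tilde{\mathcal{B}}_{R'}$, and passing freely between $\Lambda$ and its interior (the top and bottom faces of $\Lambda$ carry no mass because there $M_j$ agrees with the cylinder $Q_j$, whose measure-theoretic normal is horizontal); and verifying that almost every slice $\{M_j^s\}$ is genuinely permissible, which is just Fubini once one fixes representatives. The single substantive point — and the reason the volume constraint has to be abandoned — is that the comparison must be made slice-by-slice: with a volume constraint the individual slices $M_j^s$ need not carry the prescribed $(n-1)$-dimensional volumes (only their integrals over $s$ are pinned down), so one cannot invoke the minimality of $\{A_j\}$ at each $s$. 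This is exactly the obstruction pictured in Figure~\ref{fig:thm84}. Thus the main technical ingredient is the classical $BV$ sectioning theorem, which supplies both the perimeter inequality $|D'\chi_{M_j}|\le|D\chi_{M_j}|$ in the direction we need and the identity relating $\int_{-T}^{T}|D\chi_{M_j^s}|\,ds$ to $|D'\chi_{M_j}|$.
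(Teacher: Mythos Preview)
Your proposal is correct and follows essentially the same route as the paper: take a competitor $\{M_j\}$ agreeing with $\{Q_j\}$ outside a box $\tilde{\mathcal{B}}_R\times(-T,T)$, slice by hyperplanes $\{x_n=s\}$, apply the $(n-1)$-dimensional minimality of $\{A_j\}$ to each slice $\{M_j^s\}$, and then invoke the $BV$ sectioning inequality (the paper cites this as Lemma~9.8 of \cite{G}) to bound $\int_{-T}^{T}\mathcal{F}_S(\{M_j^s\},\tilde{\mathcal{B}}_R)\,ds$ by $\mathcal{F}_S(\{M_j\},\tilde{\mathcal{B}}_R\times(-T,T))$. Your extra bookkeeping about generic radii and a.e.\ permissibility of slices, and your explanation of why the slice-by-slice comparison breaks down under a volume constraint, are exactly in line with the paper's remarks around Figure~\ref{fig:thm84}.
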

\begin{proof}
 Suppose $\{A_j\}$ is minimal in $\R^{n-1}$ and let $\{M_j\}$ be permissible
Caccioppoli sets in $\R^n$ coinciding with $\{Q_j\}$ outside some compact set $K$.
Recall that $\tilde{\mathcal{B}}_R$ denotes the ball in $\R^{n-1}$ centered at $0$ with radius $R,$
and choose $T>0$ such that 
\begin{equation}
     K\subseteq \tilde{\mathcal{B}}_T\times(-T,T).
\end{equation}
Let $\{M_{j,t}\}\subseteq \R^{n-1}$ be defined by 
\begin{equation}
     \chi_{M_{j,t}}(y) = \chi_{M_j}(y,t), \quad j=0,1,2.
\end{equation}
Then Lemma~9.8 in \cite{G} gives
\begin{equation}
     \int_{A_j}|D\chi_{M_j}| \leq \int^T_{-T}\, dt \int_{\tilde{\mathcal{B}}_T}|D\chi_{M_{j,t}}|.
\end{equation}
Note $M_{j,t} = A_j$ outside compact sets $H_{j,t}\subseteq \tilde{\mathcal{B}}_T$ for
$j=0,1,2$, and $M_{j,t}$ are permissible.  Hence
\begin{equation}
       \mathcal{F}_S\left(\{A_j\},\tilde{\mathcal{B}}_T\right) \leq \mathcal{F}_S\left(\{M_{j,t}\},\tilde{\mathcal{B}}_T\right).
\end{equation}
Therefore
\begin{equation}
             \mathcal{F}_S\left(\{M_j\},\tilde{\mathcal{B}}_T\times(-T,T)\right) 
     \geq \sum_{j=0}^2 \alpha_j \int^T_{-T} \, dt \int_{\tilde{\mathcal{B}}_T}|D\chi_{A_j}|
          = \mathcal{F}_S\left(\{Q_j\},\tilde{\mathcal{B}}_T\times(-T,T)\right)
\end{equation}
which implies $\{Q_j\}$ is minimal.
\end{proof}

\section{Classification}
\label{classification}

We turn to classifying the possible minimal configurations in $\R^2$.  First we point out that using the
tools of mass-minimizing integral currents, Morgan \cite[Theorem 4.3]{Mo1998} showed that the triple
junction points are isolated in $\R^2$.  Under assumption of sufficient regularity Elcrat, Neel
and Siegel \cite{ENS} showed that the following Neumann angle condition holds
\begin{equation}\label{eq:ens}
\frac{\sin\gamma_{01}}{\sigma_{01}} = \frac{\sin\gamma_{02}}{\sigma_{02}} =
\frac{\sin\gamma_{12}}{\sigma_{12}},
\end{equation}
and their proof carries over to $\R^2$ directly.  Here $\gamma_{12}$ is the angle at the triple point measured within
$E_0$, $\gamma_{02}$ is the angle at the triple point measured within $E_1$, and $\gamma_{01}$ is the angle at
the triple point measured within $E_2.$
We are able to prove the following:
\begin{theorem}[Angle condition result]  \label{ACR}
   Let $\{ A_j\}$ be D-minimal or V-minimal cones in $\R^2$ with vertices at the origin.  Then each $A_j$ is formed of precisely
   one connected component, and the angle condition \eqref{eq:ens} is satisfied.
\label{thm:connectedcone}
\end{theorem}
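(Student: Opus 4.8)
The plan is to show that the cone has exactly three rays — which is the connectedness assertion — and then to extract the Neumann angle condition from the first variation at the vertex. Throughout I assume $0\in\partial A_0\cap\partial A_1\cap\partial A_2$, since otherwise some $A_j$ is empty and the statement is vacuous; recall that the cones produced by Theorem~\ref{thm:cones} do have a genuine triple point at the origin. First I would record the structure of the link: a minimizing configuration has locally finite perimeter, so in $B_1$ the interface $\bigcup_j\partial^*\!A_j$ is a \emph{finite} union of rays issuing from $0$, and hence $\partial B_1$ is partitioned, up to an $\mathcal{H}^1$-null set, into finitely many arcs $I_1,\dots,I_k$ in cyclic order, with labels $\ell_1,\dots,\ell_k\in\{0,1,2\}$ and $\ell_m\ne\ell_{m+1}$. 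Since all three fluids meet at $0$, every label occurs, so $k\ge 3$, and using $\alpha_i+\alpha_j=\sigma_{ij}$ one computes $\mathcal{F}_S(\{A_j\},B_r)=r\sum_{m=1}^{k}\sigma_{\ell_m\ell_{m+1}}$ (indices mod $k$).

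Step 1 is to rule out $k\ge4$. If $k\ge4$, by pigeonhole some fluid — say $A_0$ — fills at least two arcs; its $m\ge2$ arcs leave $m$ complementary gaps, so the smallest gap $W$ has angular width $<2\pi/m\le\pi$, with half-width $\omega<\pi/2$. Let $I_p,I_q$ be the $0$-arcs flanking $W$ and $c_1,\dots,c_s\in\{1,2\}$ the labels inside $W$. Inside a small ball $B_\epsilon$ I would build a competitor that leaves $\{A_j\}$ untouched outside $B_{\epsilon/2}$ and, inside $B_{\epsilon/2}$, moves the $s+1$ interface segments bounding the $W$-sector (of types $I_p|c_1,\,c_1|c_2,\dots,c_s|I_q$) so that they meet at a point $z_0$ on the angular bisector of $W$ at distance $\tfrac{\epsilon}{4}\cos\omega$ from $0$, rather than at $0$; the near-vertex region on the $W$-side thereby freed is given to fluid $0$, which now connects $I_p$ to $I_q$. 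Because $\omega<\pi/2$, each of these $s+1$ segments is \emph{strictly} shortened, no new interface is created (the new $0$-region borders only $I_p,I_q$, also fluid $0$, and the $c_i$-sectors along the moved segments), and the support of the change is a compact subset of $B_\epsilon$, so $\mathcal{F}_S$ drops by $\Theta(\epsilon)$. In the V-minimal case the volumes change by only $O(\epsilon^2)$, which Almgren's Volume Adjustment Lemma~\ref{PfVA} corrects inside $B_\epsilon$ at cost $O(\epsilon^2)$, still dominated by the $\Theta(\epsilon)$ gain for small $\epsilon$. Either way this contradicts $\Psi(\{A_j\},B_\epsilon)=0$ (resp.\ $\Psi_V(\{A_j\},B_\epsilon)=0$). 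Hence $k=3$: three sectors, one per fluid, so each $A_j$ is a single connected component.

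Step 2 is the angle condition. With $k=3$, write $\hat u_{ij}\in\partial B_1$ for the direction of the ray separating the fluid-$i$ and fluid-$j$ sectors; the sector of the remaining fluid $\ell$ has opening angle $\gamma_{ij}$ in the paper's convention, and $\gamma_{01}+\gamma_{02}+\gamma_{12}=2\pi$. A minimizer is stationary, so Corollary~\ref{VSMC}, tested with $T(x)=\varphi(|x|)\,w$ for fixed $w\in\R^n$ and $\varphi\in C^1_c$ with $\varphi\equiv1$ near $0$, gives — since along a ray $\mathrm{div}_{A_j}T=\varphi'(|x|)(w\cdot\hat u)$ and each ray is shared by exactly two fluids whose weights add to the matching $\sigma$ — the balance relation $\sigma_{01}\hat u_{01}+\sigma_{02}\hat u_{02}+\sigma_{12}\hat u_{12}=0$. (In the V-minimal case this same identity follows from the localized vertex-translation competitor of Step~1: translating the vertex by $tw$ inside $B_{\epsilon/2}$ changes $\mathcal{F}_S$ at first order by $-t\,w\cdot(\sigma_{01}\hat u_{01}+\sigma_{02}\hat u_{02}+\sigma_{12}\hat u_{12})$, while the bending term and the volume correction are $O(t^2/\epsilon)$ and $O(t\epsilon)$, so for $\epsilon$ then $t$ small a nonzero sum would contradict minimality.) Therefore the three vectors $\sigma_{ij}\hat u_{ij}$ close up into a triangle with side lengths $\sigma_{01},\sigma_{02},\sigma_{12}$, nondegenerate since collinearity would violate the strict triangularity~\eqref{eq:strtri}; its interior angle opposite the side of length $\sigma_{ij}$ equals $\pi-\gamma_{ij}$, and the law of sines for this triangle is precisely~\eqref{eq:ens}.

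The main obstacle is Step~1: producing an honestly admissible competitor — permissible or V-permissible, coinciding with the cone off a compact subset of a ball, with the second-order and volume-correction terms controlled — and verifying it strictly lowers the energy. The combinatorial observation that some fluid's arcs always leave a gap of angular width less than $\pi$ is what makes the ``push the junction outward'' construction go through uniformly in $k$; once $k=3$ is known, Step~2 is the classical Neumann-triangle computation.
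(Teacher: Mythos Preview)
Your argument is essentially sound and proceeds along a genuinely different route from the paper, but there is one real slip in the volume-constrained case that you should fix.

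\textbf{The gap.} In Step~1 you say the $O(\epsilon^2)$ volume defect can be repaired ``inside $B_\epsilon$'' at cost $O(\epsilon^2)$ by Almgren's Lemma, and then you test $\Psi_V(\{A_j\},B_\epsilon)=0$. But Almgren's constant $C$ depends on the configuration \emph{and the domain}; if you force the repair into $B_\epsilon\setminus B_{\epsilon/2}$, the only available interface has length $\sim\epsilon$, and a rectangle correction of area $\delta\sim\epsilon^2$ costs $\sim\delta/\epsilon\sim\epsilon$, the same order as your gain. The fix is immediate: since $\Psi_V(\{A_j\},B_R)=0$ for \emph{every} $R$, keep your local modification in $B_{\epsilon/2}$ but test against a \emph{fixed} ball, say $B_1$. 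There the interface available for the volume repair has length $\sim 1$, so the repair costs $O(\epsilon^2)$ honestly, and $-\Theta(\epsilon)+O(\epsilon^2)<0$ for small $\epsilon$. (This is exactly the ``rectangles on a large disk'' trick the paper itself uses.) The same remark applies to your Step~2 vertex-translation argument, though there your ordering ``$\epsilon$ then $t$'' already handles it correctly.

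\textbf{Comparison with the paper.} The paper does not use your push-the-junction construction or the force-balance identity $\sum\sigma_{ij}\hat u_{ij}=0$. Instead it builds an auxiliary ``good triangle'' $(T,\tilde P)$: a triangle $T$ with a marked interior point $\tilde P$ at which the three segments to the vertices meet at the Neumann angles $\Gamma_{ij}$. It shows by direct Hessian computation that the weighted Steiner-type cost $C(x,y)=\sum\zeta_j|(x,y)-P_j|$ has its unique minimum at $\tilde P$, and then proves that whenever a sector is strictly narrower than the corresponding $\Gamma_{ij}$ one can inscribe such a good triangle and strictly improve. Connectedness is obtained in two stages: first the triangle inequality rules out two of the same fluid among any three consecutive sectors (so $k\ge 6$ if $k>3$), then the good-triangle proposition handles the remaining case. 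Your pigeonhole observation --- that a repeated fluid forces a gap of half-angle $<\pi/2$, so a straight outward push of the junction shortens every affected segment --- bypasses the good-triangle machinery entirely and works uniformly in $k$. For the angle condition, your first-variation argument via Corollary~\ref{VSMC} is the classical Neumann-triangle computation and is shorter than the paper's cost-function optimization; the paper's approach, on the other hand, is more self-contained and produces an explicit competitor rather than relying on stationarity.
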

\begin{corollary}[Volume constraints of blowups]  \label{VolConBlow}
No matter what volume constraints we impose on minimization problem, the angles at the triple points
for blowup limits are independent of everything except the constants which come from the surface tensions.
\end{corollary}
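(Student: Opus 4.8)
Corollary~\ref{VolConBlow} will follow at once from Theorem~\ref{ACR}: by Theorem~\ref{thm:cones} and the tangent--plane results of Section~\ref{TP}, any blow--up limit of a D- or V-minimizer is, after one further blow--up at a triple point if necessary and up to a Euclidean factor that leaves the angles untouched, a two--dimensional minimal cone $\{A_j\}$; Theorem~\ref{ACR} forces such a cone to satisfy \eqref{eq:ens}, and \eqref{eq:ens} together with $\gamma_{01}+\gamma_{02}+\gamma_{12}=2\pi$ determines the three angles uniquely (this is just solving a triangle whose sides $\sigma_{01},\sigma_{02},\sigma_{12}$ obey the strict triangle inequalities, which is exactly \eqref{eq:strtri} in disguise, since $\sigma_{ij}=\alpha_i+\alpha_j$). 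Imposing volume constraints only changes which cone appears, never the relations its rays must satisfy, so the triple--point angles depend on the surface tensions and nothing else. The real content is therefore Theorem~\ref{ACR}, and I would argue as follows.

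Each $A_j$ is a cone with vertex at the origin, so each $\partial A_j$ is a cone, hence a union of rays from $0$, and finiteness of the perimeter makes this union finite; thus $\R^2$ is partitioned into finitely many open sectors, each assigned to one of $E_0,E_1,E_2$, with all three present by permissibility. Writing $N\ge 3$ for the number of rays, connectedness of every $A_j$ is equivalent to $N=3$ (with $N=3$ the only admissible cyclic colouring is $E_0,E_1,E_2$, one sector apiece). To exclude $N\ge 4$ I would produce a cheaper local competitor in a small ball $B_\rho$, exploiting that a vertex of valence $\ge 4$ is never minimal for a weighted perimeter obeying the strict triangle inequalities. Concretely, some fluid---say $E_0$---then occupies at least two sectors; let $E_0$ fill a neighbourhood of the origin in $B_\rho$ and cap off each intervening non--$E_0$ block by a straight chord (together with a straight interior triple junction inside that block if it carries an internal interface). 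This is a permissible triple coinciding with $\{A_j\}$ outside $B_\rho$, and since a chord subtending a sector of angle $\theta$ has length $2\rho\sin(\theta/2)$ while the two radii it replaces have total length $2\rho$, a direct computation---using $\sin(\theta/2)\le 1$ with equality only at $\theta=\pi$, the fact that the non--$E_0$ sectors have total angle strictly less than $2\pi$, and \eqref{eq:strtri}---shows it is strictly cheaper in $B_\rho$ for $\rho$ small. The $O(\rho^2)$ volume change is absorbed by Almgren's Lemma~\ref{PfVA} at cost $O(\rho^2)=o(\rho)$, contradicting $\Psi_V(\{A_j\},B_\rho)=0$ (resp.\ the D-minimality). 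Hence $N=3$ and each $A_j$ is connected.

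For the angle condition, write $P_{ij}\in S^1$ for the direction of the $E_i|E_j$ interface ray, and, for $v\in\R^2$ and small $\epsilon$, replace $\{A_j\}$ inside $B_\rho$ by the configuration whose three interfaces are the straight segments from $\epsilon v$ out to the (unchanged) boundary points $\rho P_{ij}$. Using $\sigma_{ij}=\alpha_i+\alpha_j$, its energy in $B_\rho$ is $g(\epsilon)=\sum_{ij}\sigma_{ij}\,|\rho P_{ij}-\epsilon v|$, and minimality gives $g(0)\le g(\epsilon)$, hence $g'(0)=0$, i.e.\ $\sum_{ij}\sigma_{ij}\,(P_{ij}\cdot v)=0$ for every $v$, i.e.\ the force balance $\sigma_{01}P_{01}+\sigma_{02}P_{02}+\sigma_{12}P_{12}=0$. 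Interpreting this closed vector triangle via the law of sines---precisely the computation of \cite{ENS}, which carries over verbatim once the interfaces are known to be line segments---gives \eqref{eq:ens}. (One may also route this through Corollary~\ref{VSMC}: a minimal cone is stationary for $\mathcal{F}_S$, the bulk equation is vacuous on flat rays, and only the vertex balance remains.)

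The step I expect to be the real obstacle is making this last argument rigorous in the V-minimal case. Sliding the vertex changes each sector's volume to first order in $\epsilon$---the same order as the energy variation---so the crude bound $\Delta\mathcal{F}_S\le C|\Delta V|$ of Almgren's Lemma does not by itself justify $g'(0)=0$. I would remedy this by correcting the two free volume degrees of freedom with perturbations whose energy cost is $o(\epsilon)$ but whose volume effect is $\Theta(\epsilon)$: a bump of width $w$ and height $h$ on an interface ray changes a volume by $\sim hw$ and the length by only $\sim h^2/w$, so $w=\epsilon^{1/4}$, $h\sim\epsilon^{3/4}$ gives volume change $\Theta(\epsilon)$ at energy cost $O(\epsilon^{5/4})=o(\epsilon)$; placing one such bump on each of two of the three rays kills both volume constraints. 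Then $g(0)\le g(\epsilon)+o(\epsilon)$ still forces $g'(0)=0$, and Theorem~\ref{ACR}---hence Corollary~\ref{VolConBlow}---goes through for D- and V-minimizers alike.
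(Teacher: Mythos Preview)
Your reduction of the corollary to Theorem~\ref{ACR} together with the uniqueness of the angles (Lemma~\ref{triglemma}) is exactly how the paper handles it: the corollary is stated immediately after the theorem with no separate argument, the triple $(\gamma_{01},\gamma_{02},\gamma_{12})$ being fixed by~\eqref{eq:ens} and $\sum\gamma_{ij}=2\pi$ independently of any volume data.

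Where you diverge is in the proof of Theorem~\ref{ACR} itself. The paper builds one device---the ``good triangle'' of Definition~\ref{goodTs} and Propositions~\ref{MinGoodTs}--\ref{ExistGoodTs}---and uses it twice: first (Proposition~\ref{OSPFATPB}, after a preliminary triangle-inequality step showing that any three consecutive sectors carry all three fluids, whence $N>3$ forces $N\ge 6$) to rule out extra sectors, and then again to force the exact angle condition, each time by exhibiting a strictly cheaper competitor inside a triangle whose geometry is tuned to the $\Gamma_{ij}$. For the V-case the paper exploits the cone structure globally: it works on an arbitrarily large disk and repairs the volumes with thin rectangles laid along the rays, whose energy cost is bounded by a constant times the rectangle width and can therefore be driven to zero. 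Your route is the classical first-variation one: a chord/fill competitor for connectedness, the force balance $\sum\sigma_{ij}P_{ij}=0$ for the angles (this is indeed the \cite{ENS} computation the paper cites), and a local bump of dimensions $\epsilon^{1/4}\times\epsilon^{3/4}$ in place of the paper's rectangles. The angle and V-correction steps are sound alternatives. The soft spot is your connectedness sketch: when a non-$E_0$ block carries an internal interface, the chord acquires mixed weights and the ``$2\rho\sin(\theta/2)$ versus $2\rho$'' comparison no longer captures the full bookkeeping; you would either have to carry out that computation honestly (tracking the triple junction on the chord and the residual radial segments) or first prove, as the paper does, that consecutive sectors cycle through all three fluids, which tames the block structure. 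The paper's good-triangle machinery buys a uniform argument that avoids this case analysis, at the cost of the auxiliary Euclidean construction in Proposition~\ref{ExistGoodTs}.
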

Note that the preceding two results wrap up the proof of Theorem\refthm{mainresult}\!\!.

We will start by dealing with the case without volume constraints, and 
we will proceed by contradiction, but first we will need some preliminary propositions.  Before the first proposition,
we record here a basic lemma which can be proven with no more than high school trigonometry:
\begin{lemma}[Basic trigonometry lemma]  \label{triglemma}
Given $\sigma_{01}, \; \sigma_{02}, \; \sigma_{12}$ satisfying the strict triangle inequality given in
Equation\refeqn{strtri}\!, there exists a unique triple of real numbers
$\Gamma_{01}, \; \Gamma_{02}, \; \Gamma_{12} \in (0,\pi)$ which satisfy both:
$$\Gamma_{01} + \Gamma_{02} + \Gamma_{12} = 2\pi$$
and Equation\refeqn{ens}\!\!.
\end{lemma}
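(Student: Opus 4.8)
I plan to realize the triple $(\Gamma_{01},\Gamma_{02},\Gamma_{12})$ as the supplements of the interior angles of an auxiliary Euclidean triangle whose side lengths are $\sigma_{01},\sigma_{02},\sigma_{12}$, and to read off both existence and uniqueness from SSS congruence of that triangle.

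First I would note that the strict triangularity hypothesis \refeqn{strtri} is precisely the assertion that each of $\sigma_{01},\sigma_{02},\sigma_{12}$ is strictly less than the sum of the other two: $\alpha_0>0$ unwinds to $\sigma_{12}<\sigma_{01}+\sigma_{02}$, and cyclically for $\alpha_1,\alpha_2$. Hence there is a Euclidean triangle $T$, unique up to congruence, with side lengths $\sigma_{01},\sigma_{02},\sigma_{12}$; let $\theta_{ij}\in(0,\pi)$ be the interior angle of $T$ opposite the side of length $\sigma_{ij}$. Then $\theta_{01}+\theta_{02}+\theta_{12}=\pi$ and, by the ordinary law of sines,
\begin{equation*}
\frac{\sin\theta_{01}}{\sigma_{01}}=\frac{\sin\theta_{02}}{\sigma_{02}}=\frac{\sin\theta_{12}}{\sigma_{12}}.
\end{equation*}
Setting $\Gamma_{ij}:=\pi-\theta_{ij}$ gives $\Gamma_{ij}\in(0,\pi)$, $\Gamma_{01}+\Gamma_{02}+\Gamma_{12}=3\pi-\pi=2\pi$, and $\sin\Gamma_{ij}=\sin\theta_{ij}$, so the sine--law relation \refeqn{ens} (with each $\gamma$ replaced by the corresponding $\Gamma$) holds. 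This proves existence.

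For uniqueness, suppose $(\Gamma_{01},\Gamma_{02},\Gamma_{12})$ is any triple of angles in $(0,2\pi)$ satisfying both conditions, and let $k$ be the common value of the three ratios. Since the $\sigma_{ij}$ are positive, the sign of $\sin\Gamma_{ij}=k\sigma_{ij}$ is that of $k$ for all three pairs. If $k<0$ then each $\Gamma_{ij}\in(\pi,2\pi)$, so the sum exceeds $3\pi$; if $k=0$ then each $\Gamma_{ij}=\pi$, so the sum equals $3\pi$; both contradict $\Gamma_{01}+\Gamma_{02}+\Gamma_{12}=2\pi$. Hence $k>0$ and each $\Gamma_{ij}\in(0,\pi)$. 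Put $\theta_{ij}:=\pi-\Gamma_{ij}\in(0,\pi)$; these are positive angles summing to $\pi$, hence the interior angles of some triangle, and $\sin\theta_{ij}=\sin\Gamma_{ij}=k\sigma_{ij}$ shows that triangle has sides proportional to $\sigma_{01},\sigma_{02},\sigma_{12}$ with $\theta_{ij}$ opposite $\sigma_{ij}$; being similar to $T$, it has the same angles, so $\theta_{ij}$ is forced and hence so is $\Gamma_{ij}=\pi-\theta_{ij}$. (Alternatively one avoids the auxiliary triangle for this step: the law of cosines gives $\cos\theta_{ij}=(\sigma_{ik}^2+\sigma_{jk}^2-\sigma_{ij}^2)/(2\sigma_{ik}\sigma_{jk})$ with $k$ the remaining index, and $\cos$ is injective on $[0,\pi]$.)

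This is an elementary lemma, so I do not expect a genuine obstacle; the only point needing care is the word ``unique.'' Over all of $\R$ the two conditions are invariant under transferring a multiple of $2\pi$ from one entry to another, so ``triple of real numbers'' must be read as ``triple of angles in a fixed fundamental interval,'' e.g. $(0,2\pi)$. Once that reading is fixed, the sign bookkeeping above confines every admissible triple to $(0,\pi)^3$, after which the identification with the angles of $T$ is routine.
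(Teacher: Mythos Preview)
Your argument is correct and follows essentially the same route as the paper: build the auxiliary triangle with sides $\sigma_{ij}$, invoke the law of sines, and take supplements $\Gamma_{ij}=\pi-\theta_{ij}$. Your uniqueness argument (pinning down the sign of $k$ and hence $\Gamma_{ij}\in(0,\pi)$) is in fact more careful than the paper's sketch, which leaves uniqueness implicit in the SSS congruence of the auxiliary triangle.
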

\begin{proof}
We provide a sketch:  First,
because of the strict triangle inequality, there is a unique triangle (up to reflection and congruence, of course) with
sides with lengths given by $\sigma_{01}, \; \sigma_{02},$ and $\sigma_{12}.$  Now let $\theta_{ij}$
be the angle opposite $\sigma_{ij}.$  Then the law of sines gives us:
$$\frac{\sin \theta_{01}}{\sigma_{01}} = \frac{\sin \theta_{02}}{\sigma_{02}} = \frac{\sin \theta_{12}}{\sigma_{12}} \;.$$
Of course, the angles just given sum to $\pi$ and not $2\pi,$ but their supplementary angles sum to $2\pi$ and
have the same value when plugged into the sine function.  Define $\Gamma_{ij} := \pi - \theta_{ij}$ and
everything is satisfied.
\end{proof}

Now, most of the calculus that we need to do has to be done on a suitable triangle, so we start with
a definition of a ``good triangle'' and then give the calculus proposition which will be the main engine
in the rest of our proofs in this section.

\begin{definition}[Good Triangles]   \label{goodTs}
Given a blowup limit to our minimization problem, we define a good triangle to be a pair $(T,\tilde{P})$
consisting of a triangle $T$ (whose vertices we label as $P_0, \; P_1,$ and $P_2$), and a
point $\tilde{P}$ which is in the interior of the triangle such that the following hold:
\begin{enumerate}
    \item For $i,j \in \{0,1,2\}$ with $i \ne j$ we have that the angle between the vector from
             $\tilde{P}$ to $P_i$ and the vector from $\tilde{P}$ to $P_j$ is exactly $\Gamma_{ij}.$
    \item If $\{i,j,k\}$ is a permutation of $\{0,1,2\},$ then the open segment from $P_{i}$ to $P_{j}$
             has the $\text{k}^{\text{th}}$ fluid as data.
\end{enumerate}
In order to simplify the exposition, we can assume without loss of generality that the
ordering of the vertices is counter-clockwise with respect to the triangle $T.$  See Figure~\ref{fig:triangleT}.
\end{definition}
\begin{figure}[!h]
	\centering
	\scalebox{1.2}{\includegraphics[trim=0cm 1.15cm 0cm .3cm, clip]{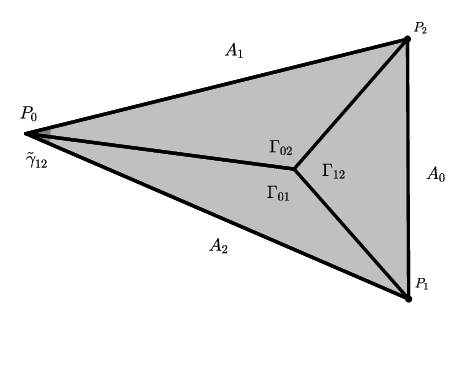}}
	\caption{The triangle $T$.}
	\label{fig:triangleT}
\end{figure}
\begin{definition}[Basic Cost Function]  \label{BCF}
Given any good triangle $(T,\tilde{P})$ with the vertices of $T$ labeled as $P_j := (a_j,b_j),$
and where for the sake of simplifying notation we let
$$\zeta_0 := \sigma_{12}, \ \ \ \zeta_1 := \sigma_{02}, \ \ \ \zeta_2 := \sigma_{01} \;,$$
we define the basic cost function by:
\begin{equation}
  C(x,y) := \sum_{j=0}^2 \zeta_j \sqrt{(x - a_j)^2 + (y - b_j)^2} \;.
  \label{eq:thecost}
\end{equation}
The cost function $C(x,y)$ is continuous on the closed bounded
triangle, $T,$ and so it must attain a minimum there.
\end{definition}

\begin{proposition}[Minimization on good triangles]  \label{MinGoodTs}
The unique $D-minimizer$ on a good triangle $(T,\tilde{P})$ with $\tilde{P} = (\tilde{x}, \tilde{y})$
is the configuration formed by letting $E_i$ be the
triangular region with $P_j, \; P_k,$ and $\tilde{P}$ as vertices, where we let $(i,j,k)$ run through the 
three permutations: $\{(0,1,2), (1,2,0), (2,0,1)\}.$  Furthermore, 
the basic cost function has the following properties:
	\begin{itemize}
                     \item[A)] The Hessian $D^2C$ is positive definite in the interior of $T.$
		\item[B)] $\nabla C(x,y)$ is zero if and only if $(x,y) = (\tilde{x},\tilde{y}).$
		\item[C)] The cost $C$ has a unique minimum at $(\tilde{x},\tilde{y}).$
		\item[D)] If we let $\tilde{v}_j$ denote the vector from $(\tilde{x},\tilde{y})$ to $(a_j, b_j),$ then the
		angles $$\gamma_{ij} := \arccos \frac{\tilde{v}_i \cdot \tilde{v}_j}{|\tilde{v}_i|\cdot|\tilde{v}_j|}$$
                     satisfy $$\gamma_{ij} \equiv \Gamma_{ij}$$ and therefore automatically
		obey the relation
		\begin{equation}
		\frac{\sin \gamma_{01}}{\sigma_{01}} = \frac{\sin \gamma_{12}}{\sigma_{12}} = \frac{\sin \gamma_{02}}{\sigma_{02}},
		\label{eq:ENSformula}
		\end{equation}
		which is derived by using the Calculus of Variations in \cite{ENS}.
	\end{itemize}
\end{proposition}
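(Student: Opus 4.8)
The plan is to reduce the statement to the elementary weighted Fermat--Torricelli minimization of $C$ over $T$, and then to upgrade ``best three-segment competitor'' to ``the $D$-minimizer'' via a paired calibration in the spirit of \cite{LM1994}. First note the bookkeeping identity $\alpha_i+\alpha_j=\sigma_{ij}$ for $i\ne j$, immediate from \eqref{eq:sigs}. Consequently, for any interior point $Q=(x,y)$ of $T$ the configuration that joins $Q$ to $P_0,P_1,P_2$ by three segments and assigns to the subtriangle opposite $P_i$ the $i$-th fluid is permissible, realizes the prescribed Dirichlet data (property~2 of Definition~\ref{goodTs}), and has energy $\zeta_0|QP_0|+\zeta_1|QP_1|+\zeta_2|QP_2|=C(x,y)$, because the interface between fluids $i$ and $j$ is the segment $QP_k$ weighted by $\sigma_{ij}=\zeta_k$. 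So among such three-segment competitors the minimal energy is $\min_T C$, attained exactly at the minimizers of $C$; in particular the configuration through $\tilde P$ has energy $C(\tilde P)$.

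I would then dispatch A--D. For A: on $\mathrm{int}(T)$ the distances $|(x,y)-P_j|$ are positive, so $C$ is smooth there with $D^2C=\sum_j \zeta_j\,|(x,y)-P_j|^{-1}\,(\mathrm{Id}-\hat w_j\otimes\hat w_j)$, $\hat w_j$ the unit vector from $P_j$ to $(x,y)$; each summand is positive semidefinite with kernel $\mathrm{span}(\hat w_j)$, and since the $\zeta_j$ are positive (strict triangularity, \eqref{eq:strtri}) the sum can fail to be positive definite only if some vector is parallel to all three $\hat w_j$, i.e.\ only if $P_0,P_1,P_2$ are collinear -- impossible for a genuine triangle. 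Hence $C$ is strictly convex on $\mathrm{int}(T)$ and convex on $T$. For B: writing $\hat u_j=\tilde v_j/|\tilde v_j|$ for the unit vector from $\tilde P$ to $P_j$, the good-triangle angles are the $\Gamma_{ij}$, which sum to $2\pi$ and satisfy \eqref{eq:ens} (Lemma~\ref{triglemma}); Lami's theorem -- equivalently a one-line computation after placing $\hat u_0,\hat u_1,\hat u_2$ at polar angles $0,\Gamma_{01},\Gamma_{01}+\Gamma_{12}$ -- gives $\zeta_0\hat u_0+\zeta_1\hat u_1+\zeta_2\hat u_2=0$, which, since $\nabla C(x,y)=\sum_j\zeta_j\bigl((x,y)-P_j\bigr)/|(x,y)-P_j|$, is exactly $\nabla C(\tilde P)=0$; strict convexity then forces $\tilde P$ to be the only critical point. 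For C: convexity and differentiability at the interior point $\tilde P$ give $C(z)\ge C(\tilde P)$ on all of $T$, and a second minimizer $z\ne\tilde P$ would make $C$ affine, hence not strictly convex, along the interior of $[\tilde P,z]$. Part D is then automatic: $\gamma_{ij}$ is by definition the angle between $\tilde v_i$ and $\tilde v_j$, which is $\Gamma_{ij}\in(0,\pi)$ by property~1 of Definition~\ref{goodTs}, so $\gamma_{ij}\equiv\Gamma_{ij}$ and \eqref{eq:ENSformula} is precisely \eqref{eq:ens} via Lemma~\ref{triglemma}.

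For the minimality statement I would calibrate. Let $\perp$ denote the rotation by $\pi/2$ and choose constant vector fields $u_0,u_1,u_2$ on $\R^2$ with $u_2-u_1=\sigma_{12}\hat u_0^{\perp}$, $u_0-u_2=\sigma_{02}\hat u_1^{\perp}$ and $u_1-u_0=\sigma_{01}\hat u_2^{\perp}$; these are consistent because their sum is $\bigl(\zeta_0\hat u_0+\zeta_1\hat u_1+\zeta_2\hat u_2\bigr)^{\perp}=0$ by B, and the orientation of $\perp$ can be fixed so that $\hat u_k^{\perp}$ is the outward normal of the assigned configuration along the segment $\tilde P P_k$. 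Each $u_j$ is divergence free and $|u_i-u_j|=\sigma_{ij}$, so applying the Gauss--Green theorem for sets of finite perimeter to $F_j\cap T$ and summing over $j$ gives, for any permissible $\{F_j\}$ with the prescribed traces, $0=\sum_{i<j}\int_{\partial^*F_i\cap\partial^*F_j\cap T}(u_j-u_i)\cdot\nu_{F_j}\,d\mathcal H^1+\Lambda$, where $\Lambda$ depends only on the (fixed) boundary data; since $\mathcal F_S(\{F_j\},T)=\sum_{i<j}\sigma_{ij}\mathcal H^1(\partial^*F_i\cap\partial^*F_j\cap T)$ and $(u_j-u_i)\cdot\nu_{F_j}\le\sigma_{ij}$, it follows that $\mathcal F_S(\{F_j\},T)\ge-\Lambda$, with equality along the assigned configuration (each of its interfaces is a segment $\tilde P P_k$ with normal $\hat u_k^{\perp}=(u_j-u_i)/\sigma_{ij}$). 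Hence that configuration is a $D$-minimizer, of energy $C(\tilde P)$.

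The main obstacle is uniqueness. Equality in the calibration forces $\nu_{F_j}=(u_j-u_i)/\sigma_{ij}$, a fixed unit vector, $\mathcal H^1$-a.e.\ on each interface of any other $D$-minimizer, so every interface lies in lines parallel to one of the $\hat u_k$. Invoking the interior regularity of planar minimizers (Morgan \cite[Thm.~4.3]{Mo1998}: triple points are isolated and, with no volume constraint, interfaces are straight), the interfaces of such an $\{F_j\}$ form a finite straight-line graph in $T$; since the trace is constant along each side it meets $\partial T$ only at $P_0,P_1,P_2$, and a minimizer encloses no bounded cell of a single fluid, so the graph is a tree with exactly three degree-one nodes and hence exactly one interior, degree-three node -- a single triple point -- which the angle condition (Corollary~\ref{VSMC}) together with the data pins at $\tilde P$; thus $\{F_j\}$ equals the assigned configuration. (Equivalently: every $D$-minimizer is then a three-segment configuration, so it minimizes $C$, and by C its vertex is $\tilde P$.) I expect this topological/structural step -- excluding several triple points or enclosed cells -- to be the only genuine difficulty; the cost--energy dictionary, the convexity bounds A and C, the balancing identity B, and D are routine once Lemma~\ref{triglemma} is available.
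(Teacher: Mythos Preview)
Your argument is correct, and for parts A--D it is essentially the same as the paper's, only more economically packaged: the paper computes the Hessian entrywise and invokes the Schwarz inequality plus a collinearity obstruction to get positive definiteness, whereas you write $D^2C$ as a sum of rank-one projections and reach the same collinearity obstruction; for B the paper does a longer coordinate computation with cross products to deduce $\sum_j \zeta_j \hat u_j=0$ from the angle relations, which is exactly your appeal to Lami's theorem. Part D is, in both treatments, immediate from Definition~\ref{goodTs}, and C follows from A and B in both.

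Where you genuinely diverge is in the first sentence of the proposition. The paper disposes of it in one line (``our statements about the minimizer follow from our statements about the cost function'') and never explains why the $D$-minimizer must be a three-segment competitor at all; for the applications in Section~\ref{classification} this is harmless, since all that is actually used later is that $C(\tilde P)<C(P_0)$, i.e.\ that the three-segment configuration through $\tilde P$ strictly beats the original conical one restricted to $T$. You instead supply a full paired-calibration proof in the style of \cite{LM1994}, which yields minimality among \emph{all} permissible competitors with the given trace, and then argue uniqueness via the equality case of the calibration together with Morgan's planar regularity \cite{Mo1998}. That is a more complete proof of the proposition as stated, at the cost of importing more machinery. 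One small caution: your uniqueness paragraph invokes Corollary~\ref{VSMC} for ``the angle condition'' at an interior triple point of a competing minimizer, but that corollary is only the stationarity identity; what you actually need there is either the calibration equality case you already have (fixing the three interface directions) together with the tree argument, or simply the observation that any such competitor is then a three-segment configuration and hence, by your part C, must have its node at $\tilde P$. Either route closes the gap.
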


\begin{proof}
Our statements about the minimizer follow from our statements about the cost function, so we will skip
immediately to proving those facts.  In order to simplify our computations, we start by defining the following notation:
For $j = 0, \, 1,$ or $2,$
	we define:
	\begin{alignat*}{1}
	X_j &:= x - a_j \\
	Y_j &:= y - b_j \\
	\beta_j &:= (X_j^2 + Y_j^2)^{1/2} \\  
	W_j &:= \zeta_j / \beta_j \\
	Z_j &:= \zeta_j / \beta_j^3 \;.
	\end{alignat*}
	All sums are assumed to be sums from $j = 0$ to $2.$
	
	With our new notation, we easily compute:
	\begin{alignat*}{1}
	C(x,y) &= \sum \zeta_j \beta_j \\
	C_x(x,y) &= \sum W_j X_j  \\
	C_y(x,y) &= \sum W_j Y_j  \\
	C_{xx}(x,y) &= \sum Z_j Y_j^2 \\
	C_{xy}(x,y) &= \sum Z_j X_j Y_j \\
	C_{yy}(x,y) &= \sum Z_j X_j^2 \;.
	\end{alignat*}
          The trace of the Hessian $D^2 C$ is obviously strictly positive.
	The determinant of the Hessian $D^2 C$ is equal to
$$\left(\sum Z_j X_j^2 \right) \left(\sum Z_j Y_j^2 \right) - \left( \sum Z_j X_j Y_j \right)^2$$
and by using the Schwarz inequality on a set of three points with delta measures on each one
weighted by $Z_j$ we easily see that this determinant is nonnegative.  On the other hand,
by noting that equality in the Schwarz inequality only happens when one function is a multiple
of the other, and that $(X_0,X_1,X_2) = \alpha(Y_0,Y_1,Y_2)$ would mean that the slopes
of the vectors from $\tilde{P}$ to each vertex are the same, we can easily rule out equality,
and so we conclude that $D^2 C$ is positive definite, thereby proving A.
	
At this point we note that D follows immediately by definition of what a good triangle is, and
B implies C, now that we have our statement about the Hessian.
In fact, it will suffice to show that the gradient vanishes at $(\tilde{x},\tilde{y}),$ as the
uniqueness of the critical point of the cost function follows from positive definiteness of the Hessian.
Thus, the gradient condition that we now need to show is equivalent to showing that:
	\begin{equation}
	0 = \sum W_j X_j = \sum W_j Y_j \;
	\label{eq:gradcond}
	\end{equation}
holds when $(x,y) = (\tilde{x}, \tilde{y}).$

	We compute the sines of the angles, $\gamma_{ij},$ by giving them a zero z-component and then taking cross
	products (while carefully following the right-hand rule and recalling our convention about
           the counter-clockwise orientation of $(P_0,P_1,P_2)$):
	\begin{equation}
	\sin \gamma_{01} = \frac{(v_0 \times v_1)\cdot \hat{k}}{|v_0|\cdot|v_1|}
	= \frac{X_0 Y_1 - X_1 Y_0}{\beta_0 \beta_1} \;,
	\label{eq:sin01}
	\end{equation}
	\begin{equation}
	\sin \gamma_{12} = \frac{(v_1 \times v_2)\cdot \hat{k}}{|v_1|\cdot|v_2|}
	= \frac{X_1 Y_2 - X_2 Y_1}{\beta_1 \beta_2} \;,
	\label{eq:sin12}
	\end{equation}
	\begin{equation}
	\sin \gamma_{02} = \frac{(v_2 \times v_0)\cdot \hat{k}}{|v_2|\cdot|v_0|}
	= \frac{X_2 Y_0 - X_0 Y_2}{\beta_2 \beta_0} \;,
	\label{eq:sin02}
	\end{equation}
	where $\hat{k}$ is, as usual, the unit vector in the positive z direction.
	
	Observe that
	\begin{equation}
	\frac{\sin \gamma_{01}}{\sigma_{01}} = \frac{X_0 Y_1 - X_1 Y_0}{\zeta_2 \beta_0 \beta_1} \ \ \ \text{and} \ \ \ 
	\frac{\sin \gamma_{12}}{\sigma_{12}} = \frac{X_1 Y_2 - X_2 Y_1}{\zeta_0 \beta_1 \beta_2} \;.
	\label{eq:fracs}
	\end{equation}
Because we are assuming that we are at the point $(\tilde{x},\tilde{y}),$ we know that
	\begin{equation}
	\frac{X_0 Y_1 - X_1 Y_0}{\zeta_2 \beta_0 \beta_1} =
	\frac{X_1 Y_2 - X_2 Y_1}{\zeta_0 \beta_1 \beta_2} \;.
	\label{eq:fracsequal}
	\end{equation}
	By cross multiplication and some cancellation of the $\beta_1$ we see that we have
	\begin{equation}
           W_0 (X_0 Y_1 - X_1 Y_0) =
	\frac{\zeta_0 (X_0 Y_1 - X_1 Y_0)}{\beta_0} = 
	\frac{\zeta_2 (X_1 Y_2 - X_2 Y_1)}{\beta_2} = W_2 (X_1 Y_2 - X_2 Y_1) \;.
	\label{eq:cmquants}
	\end{equation}
           Now we note that
    \begin{alignat*}{1}
        W_2 (X_1 Y_2 - X_2 Y_1) &= W_0 (X_0 Y_1 - X_1 Y_0) \\
                                                 &= W_0 X_0 Y_1 - W_0 X_1 Y_0 + W_1 X_1 Y_1 - W_1 X_1 Y_1 \\
                                                 &= Y_1(W_0 X_0 + W_1 X_1) - X_1(W_0 Y_0 + W_1 Y_1) \\
                                                 &= Y_1(W_0 X_0 + W_1 X_1 + W_2 X_2) - W_2(X_2 Y_1) \\
                           & \ \ \ \ \ \ - X_1(W_0 Y_0 + W_1 Y_1 + W_2 Y_2) + W_2(X_1 Y_2)
    \end{alignat*}
and so we have
\begin{equation}
    0 = Y_1(W_0 X_0 + W_1 X_1 + W_2 X_2) - X_1(W_0 Y_0 + W_1 Y_1 + W_2 Y_2) \;.
\label{eq:conchere1}
\end{equation}
Arguing in the exact same way with each of the other combinations of angles, we see that:
\begin{equation}
    0 = Y_0(W_0 X_0 + W_1 X_1 + W_2 X_2) - X_0(W_0 Y_0 + W_1 Y_1 + W_2 Y_2)
\label{eq:conchere0}
\end{equation}
and
\begin{equation}
    0 = Y_2(W_0 X_0 + W_1 X_1 + W_2 X_2) - X_2(W_0 Y_0 + W_1 Y_1 + W_2 Y_2) \;.
\label{eq:conchere2}
\end{equation}
Here again, if both $W_0 X_0 + W_1 X_1 + W_2 X_2$ and $W_0 Y_0 + W_1 Y_1 + W_2 Y_2$
did not vanish, then we would come to a contradiction by having the slopes of $v_0, v_1,$ and
$v_2$ all equal.
Thus we have the nontrivial direction of B, and C follows.
\end{proof}

Now we turn to a task which is essentially Euclidean geometry which will allow us to produce a good triangle.
\begin{proposition}[Existence of Good Triangles] \label{ExistGoodTs}
Let $\{A_j\}$ be a permissible configuration of cones in $\R^2$ with vertices at the origin, and assume that
as we move through a counterclockwise rotation, we
have a sector which we will call $A_2$ which is a subset of $E_2,$
followed by a sector which we will call $A_0$ which is a subset of $E_0,$ 
followed by a sector which we will call $A_1$ which is a subset of $E_1.$
Furthermore, assume that the angle of the opening for $A_0$ is strictly less than the real number $\Gamma_{12}.$
Then letting $P_0$ be the origin, there exists a point $\tilde{P}$ within the infinite sector $A_0,$ such that
we can find a point $P_1$ on the ray between $A_0$ and $A_2$ and a point $P_2$ on the ray between $A_0$ and
$A_1$ such that the triangle formed with vertices given by the $P_j$ together with the point $\tilde{P}$
forms a good triangle.  See Figure~\ref{fig:BS}.
\end{proposition}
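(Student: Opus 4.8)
The proposition is essentially a plane-geometry construction, so the plan is simply to build the good triangle by hand. Put $P_0$ at the origin and choose coordinates so that the boundary ray $\ell_1$ of $A_0$ (separating $A_0$ from $A_2$) is the positive $x$-axis and the boundary ray $\ell_2$ (separating $A_0$ from $A_1$) points at angle $\omega$, where $\omega \in (0,\Gamma_{12})$ is the opening of $A_0$; thus $A_0 = \{ r(\cos\theta,\sin\theta) : r>0,\ 0<\theta<\omega \}$. I will choose $P_1 \in \ell_1$, $P_2 \in \ell_2$, and $\tilde P$ in the open sector. With this placement item~2 of Definition~\ref{goodTs} is automatic: the edge $P_0 P_1$ lies on $\ell_1$, across which (on the side away from the triangle) one sees $A_2 \subset E_2$; the edge $P_0 P_2$ lies on $\ell_2$, across which one sees $A_1 \subset E_1$; and the relatively open edge $P_1 P_2$ runs through the interior of the convex sector $A_0$, so across it one still sees $E_0$. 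Hence the whole task is to place $\tilde P, P_1, P_2$ so that the angles at $\tilde P$ come out right.

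Recall from the proof of Lemma~\ref{triglemma} that $\Gamma_{ij} = \pi - \theta_{ij}$ with $\theta_{ij} \in (0,\pi)$, so each $\Gamma_{ij} \in (0,\pi)$ and $\Gamma_{01}+\Gamma_{02}+\Gamma_{12} = 2\pi$; in particular $\Gamma_{12} < \pi$, hence $\omega < \pi$. Since $(\pi - \Gamma_{01}) + (\pi - \Gamma_{02}) = \Gamma_{12} > \omega$ with both summands positive, we may split $\omega = \alpha_1 + \alpha_2$ with $0 < \alpha_1 < \pi - \Gamma_{01}$ and $0 < \alpha_2 < \pi - \Gamma_{02}$ (for instance with $\alpha_i$ proportional to $\pi - \Gamma_{0i}$). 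Set $\tilde P = (\cos\alpha_1, \sin\alpha_1)$, the point at unit distance from $P_0$ along the ray of angle $\alpha_1$, which lies in the open sector. Now define $P_1$ by solving the triangle $P_0 \tilde P P_1$ with vertex angle $\alpha_1$ at $P_0$ and $\Gamma_{01}$ at $\tilde P$: since $\alpha_1 + \Gamma_{01} < \pi$ the remaining angle $\pi - \alpha_1 - \Gamma_{01}$ is positive, and the law of sines puts $P_1$ on $\ell_1$ at distance $\sin\Gamma_{01} / \sin(\alpha_1 + \Gamma_{01}) > 0$ from $P_0$. Symmetrically, using $\alpha_2 + \Gamma_{02} < \pi$, define $P_2$ on $\ell_2$ at distance $\sin\Gamma_{02} / \sin(\alpha_2 + \Gamma_{02}) > 0$, with vertex angles $\alpha_2$ at $P_0$ and $\Gamma_{02}$ at $\tilde P$.

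It remains to check item~1 of Definition~\ref{goodTs} and that $\tilde P$ is interior to $T = P_0 P_1 P_2$ with $(P_0,P_1,P_2)$ counterclockwise. By construction $\angle P_0 \tilde P P_1 = \Gamma_{01}$ and $\angle P_0 \tilde P P_2 = \Gamma_{02}$. Since $0 < \alpha_1 < \omega < \pi$, the rays $\ell_1$ and $\ell_2$ lie strictly on opposite sides of the line through $P_0$ and $\tilde P$, hence so do $P_1$ and $P_2$; therefore rotating the direction $\tilde P \to P_1$ to the direction $\tilde P \to P_2$ through the direction $\tilde P \to P_0$ sweeps $\Gamma_{01} + \Gamma_{02} = 2\pi - \Gamma_{12} > \pi$, which forces the non-reflex angle $\angle P_1 \tilde P P_2$ to equal $\Gamma_{12}$. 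This gives item~1. The counterclockwise orientation of $(P_0,P_1,P_2)$ is clear since $\ell_2$ is the counterclockwise-next boundary ray after $\ell_1$ and $\omega < \pi$. Finally $P_0,P_1,P_2$ are not collinear ($P_0,P_1 \in \ell_1$ while $P_2 \in \ell_2 \ne \ell_1$), and the three rays from $\tilde P$ to these vertices subtend pairwise angles $\Gamma_{01}, \Gamma_{12}, \Gamma_{02}$, each in $(0,\pi)$ and summing to $2\pi$; a point distinct from the three vertices with this property must lie in the interior of the triangle (a point strictly outside the closed triangle sees all three vertices within an open half-plane, giving angle sum $< 2\pi$, and a point on an open edge has one of the angles equal to $\pi$). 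Therefore $(T,\tilde P)$ is a good triangle.

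There is no deep step here; the one place the hypothesis is genuinely used --- and the main thing to get right --- is the splitting $\omega = \alpha_1 + \alpha_2$, where $\omega < \Gamma_{12}$ is precisely the statement that there is enough angular room at $P_0$ to fit both auxiliary triangles $P_0 \tilde P P_1$ and $P_0 \tilde P P_2$ with nondegenerate base angles. The other point requiring a little care is the interior-containment of $\tilde P$, which I would argue by the angle-sum dichotomy above rather than through coordinates.
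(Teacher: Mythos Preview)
Your proof is correct, and the geometric setup is the same as the paper's: place $\tilde P$ on the unit circle inside $A_0$, send out three rays at angles $\Gamma_{01},\Gamma_{12},\Gamma_{02}$ with one pointing to the origin, and arrange that the other two hit $\ell_1$ and $\ell_2$. The difference is in how the existence of a suitable $\tilde P$ is argued. The paper runs a continuity argument: it slides $\tilde P$ along the unit arc, locates the angles $\theta_1<\theta_2$ at which the second and first rays become parallel to $\ell_2$ and $\ell_1$ respectively (the strict inequality $\theta_1<\theta_2$ coming precisely from $\omega<\Gamma_{12}$), and picks any $\theta\in(\theta_1,\theta_2)$. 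You instead translate the same obstruction into the two inequalities $\alpha_1+\Gamma_{01}<\pi$ and $\alpha_2+\Gamma_{02}<\pi$ with $\alpha_1+\alpha_2=\omega$, observe that $(\pi-\Gamma_{01})+(\pi-\Gamma_{02})=\Gamma_{12}>\omega$ makes such a splitting possible, and write $P_1,P_2$ down explicitly via the law of sines. Your route is more constructive and makes it transparent exactly where the hypothesis $\omega<\Gamma_{12}$ enters; the paper's route avoids the algebra but leaves the verification that $\tilde P$ is interior to $T$ and that item~1 holds for the third pair $(1,2)$ implicit, whereas you spell both out with the angle-sum dichotomy. Either way the content is the same elementary plane geometry.
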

\begin{figure}[!h]
	\centering
	\scalebox{.65}{\includegraphics[trim=0cm 0cm 0cm 0cm, clip]{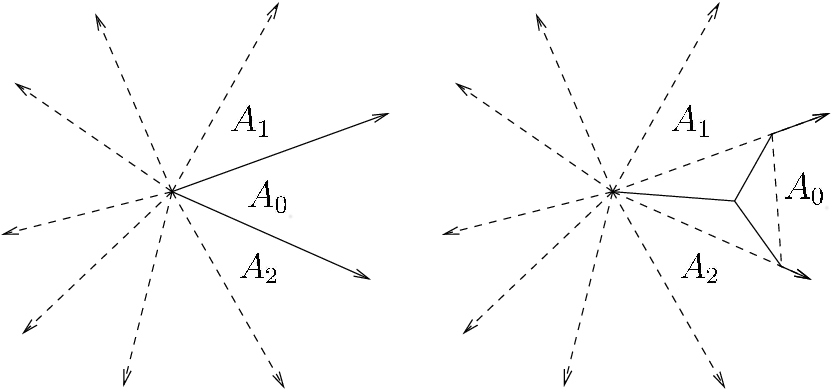}}
	\caption{The basic setting for proposition\refthm{ExistGoodTs}\!\!.}
	\label{fig:BS}
\end{figure}

\begin{remark}[Key Assumption]  \label{KA}
We note that after relabeling things and/or reflecting things we see that the only real assumption we make
is that the angle of the opening for $A_0$ is strictly smaller than $\Gamma_{12}.$
\end{remark}

\begin{proof}
We consider the set of points with distance one from the origin which intersects the solid sector $A_0,$
and we plan to make one of these points the point $\tilde{P}$ in our good triangle.
At each point on that set we extend three rays with the following two properties:
\begin{itemize}
    \item[1)] One of the rays passes through the origin.
    \item[2)] Going counter-clockwise from the rays passing through the origin,
                   the angles between the rays are $\Gamma_{01}$ followed by $\Gamma_{12}$ followed by
                   $\Gamma_{02}.$
\end{itemize}
Going counter-clockwise starting with the ray that passes through the origin, we will refer to these
rays as the ``zeroth ray,'' the ``first ray,'' and the ``second ray,'' respectively.
\begin{figure}[!h]
	\centering
	\scalebox{.6}{\includegraphics[trim=1.5cm 1.5cm 0cm 0cm, clip]{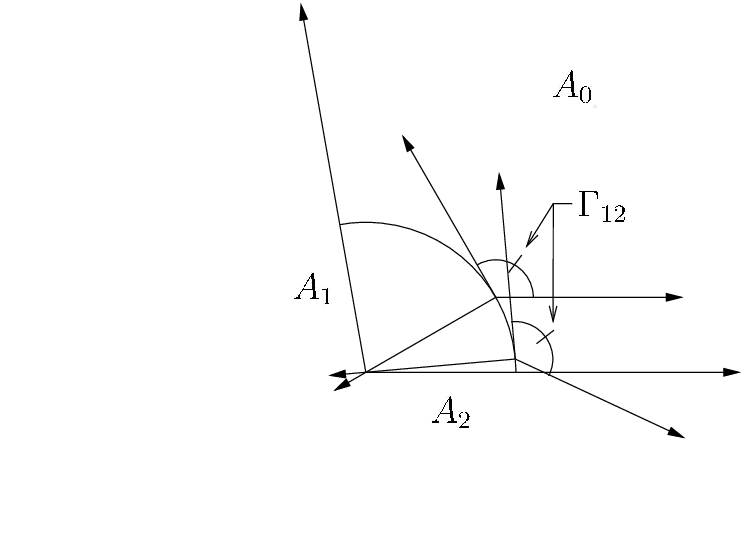}}
	\caption{The basic picture.}
	\label{fig:RF}
\end{figure}
In Figure~\ref{fig:RF} we have placed these three rays for two of the points with distance one from the origin
and we have labeled the angle which has measure $\Gamma_{12}.$  We can choose
our coordinate system so that the border between $A_0$ and $A_2$ is the positive x-axis, and then owing
to the fact that all of the $\Gamma_{ij} < \pi,$ we see that with $\tilde{P}$ having $\theta > 0$ but sufficiently
small, we must have an intersection of the first ray with $\partial A_0 \cap \partial A_2.$  In Figure~\ref{fig:RF}
we have chosen one of our potential $\tilde{P}$'s to have $\theta$ equal to five degrees.  Now if the second ray
has a nonempty intersection with $\partial A_0 \cap \partial A_1,$ then we are done by letting $P_1$
and $P_2$ be the two points of intersection that we have found already. On the other hand, it is not necessarily the
case that the second ray will intersect $\partial A_1$ if $\theta$ is sufficiently small.  Assuming that there is no
intersection we consider what happens as we increase $\theta$ while recalling that the main hypothesis guarantees
that $\Gamma_{12}$ is larger than the angle between the rays on either side of $A_0.$  In particular, this hypothesis
guarantees that the second ray will be parallel to $\partial A_1 \cap \partial A_0$ at a value of $\theta$ which we
can call $\theta_1$ which is strictly less than the value of $\theta$ which we call $\theta_2$ where the first ray
is parallel to $\partial A_2 \cap \partial A_0.$  Then by taking $\theta$ strictly between $\theta_1$ and $\theta_2$
we are guaranteed a frame of three vectors with all of the desired intersections.  (In Figure~\ref{fig:RF} we
have chosen $\theta_2$ as another value of $\theta$ where we plotted the three relevant rays.  Decreasing $\theta$
from that value very slightly gives us what we need.)
\end{proof}

The next proposition we need shows that at blow up limits we have a distinct sector for each fluid
and not multiple sectors for any of the fluids.
\begin{proposition}[One Sector Per Fluid at Blowups]  \label{OSPFATPB}
     Let $\{ A_j\}$ be D-minimal cones in $\R^2$ with vertices at the origin.  Then each $A_j$ is formed of
     precisely one connected component.
\end{proposition}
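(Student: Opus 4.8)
The plan is to argue by contradiction. Suppose some $A_{j_0}$ is disconnected. Since two maximal sectors of the same fluid can never be cyclically adjacent (they would be joined into a single sector), a disconnected fluid forces the origin to be the common vertex of $N\geq 4$ sectors, with positive opening angles $\theta(S)$ summing to $2\pi$. For a sector $S$ of fluid $a$ whose two neighbouring sectors carry fluids $p$ and $q$ (necessarily $p,q\neq a$), call $\tau(S):=\Gamma_{pq}$ if $p\neq q$ and $\tau(S):=\pi$ if $p=q$ the \emph{threshold} of $S$, the $\Gamma_{ij}$ being the numbers from Lemma~\ref{triglemma}. The proof then splits into a local modification lemma — no D-minimal cone contains a sector $S$ with $\theta(S)<\tau(S)$ — and a counting lemma — when $N\geq 4$ some sector does have $\theta(S)<\tau(S)$ — whose combination is the desired contradiction.

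For the modification lemma, assume $\theta(S)<\tau(S)$. In the case $p\neq q$, relabel the fluids so that $a=0$ and (reflecting if necessary) so that, going counterclockwise, we meet $A_2\subset E_2$, then $S=A_0\subset E_0$, then $A_1\subset E_1$; since $\theta(S)<\Gamma_{12}$, Proposition~\ref{ExistGoodTs} yields a good triangle $(T,\tilde P)$ with $P_0$ the origin, $P_1$ on the $A_0$–$A_2$ ray, $P_2$ on the $A_0$–$A_1$ ray, and $T\subseteq\overline{A_0}$. I would then build a competitor that equals $\{A_j\}$ off $T$ and, inside $T$, equals the minimizer of Proposition~\ref{MinGoodTs}, namely $E_i\cap T=$ the triangle with vertices $P_j,P_k,\tilde P$ for the cyclic triples $(i,j,k)$. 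The edge data of that minimizer ($E_2$ along $P_0P_1$, $E_1$ along $P_0P_2$, $E_0$ along $P_1P_2$) matches the surrounding sectors, so the competitor is permissible and agrees with $\{A_j\}$ outside $T$. A direct computation of the localized energy, using $\alpha_i+\alpha_j=\sigma_{ij}$, shows that the competitor's interface energy on $\overline T$ equals the Steiner cost $C(\tilde P)$ of Definition~\ref{BCF}, while $\{A_j\}$ contributes $\sigma_{02}|P_0P_1|+\sigma_{01}|P_0P_2|$, which is exactly the value $C(P_0)$ of that same cost function at the vertex $P_0$; since $C$ has its unique minimum at the interior point $\tilde P$ by Proposition~\ref{MinGoodTs}(C), $C(\tilde P)<C(P_0)$, contradicting D-minimality. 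In the degenerate case $p=q=b$, the good triangle collapses to a single interface: take $P_1,P_2$ equidistant from the origin on the two rays bounding $S$, absorb the wedge with vertices $P_0,P_1,P_2$ (which lies in $\overline S$) into $E_b$, and observe that the cost on that wedge drops from $\sigma_{ab}(|P_0P_1|+|P_0P_2|)$ to $\sigma_{ab}|P_1P_2|$, strictly smaller because the angle at $P_0$ equals $\theta(S)<\pi$.

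For the counting lemma it suffices to show $\sum_S\tau(S)>2\pi$ whenever $N\geq 4$, since then $\sum_S\theta(S)=2\pi$ forces $\theta(S)<\tau(S)$ for some $S$ (and such an $S$ automatically has $\theta(S)<\tau(S)\leq\pi$, so the modification lemma applies). Let $|U|$ be the number of sectors whose two neighbours coincide and $d_a$ the number of the remaining sectors carrying fluid $a$, so $|U|+d_0+d_1+d_2=N$ and $\sum_S\tau(S)=|U|\pi+d_0\Gamma_{12}+d_1\Gamma_{02}+d_2\Gamma_{01}$. If $|U|\geq 3$ this exceeds $3\pi$. If $|U|=0$, then each consecutive triple $f_{i-1},f_i,f_{i+1}$ is a permutation of $\{0,1,2\}$, which forces the cyclic fluid word to be $3$-periodic; hence $3\mid N$, so $N\geq 6$, and $\sum_S\tau(S)=(N/3)(\Gamma_{12}+\Gamma_{02}+\Gamma_{01})=(N/3)\,2\pi\geq 4\pi$. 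If $|U|\in\{1,2\}$, each fluid whose $d_a$ vanishes has all of its (nonempty) collection of sectors inside $U$, and distinct such fluids occupy distinct members of $U$, so at least $3-|U|$ of the $d_a$ are positive; combining this with $d_0+d_1+d_2=N-|U|\geq 3$, with $\Gamma_{12}+\Gamma_{02}+\Gamma_{01}=2\pi$, and with $\Gamma_{ij}<\pi$ gives $d_0\Gamma_{12}+d_1\Gamma_{02}+d_2\Gamma_{01}>(2-|U|)\pi$, so again $\sum_S\tau(S)>2\pi$. Thus in every case such an $S$ exists, and the modification lemma completes the contradiction; hence each $A_j$ is connected.

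The step I expect to carry the weight is the modification lemma, specifically the energy bookkeeping on $\overline T$ that identifies $\{A_j\}$'s contribution with the value $C(P_0)$ of the cost function at a \emph{vertex} of the triangle, so that the strict minimality of $C$ at the interior point $\tilde P$ from Proposition~\ref{MinGoodTs} instantly produces a strictly cheaper competitor; the counting lemma is elementary, the only care needed being the observation that a sector of opening $\geq\pi$ (at most one can occur) never has to be touched, since the sector selected by the counting lemma always has opening strictly below $\pi$.
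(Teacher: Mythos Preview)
Your proof is correct and follows essentially the same route as the paper: both use the triangle-inequality ``filling in'' for a sector flanked by the same fluid on both sides, and the good-triangle construction (Propositions~\ref{ExistGoodTs} and~\ref{MinGoodTs}) together with the strict interior minimum of the cost $C$ for a sector flanked by two distinct fluids. The paper organizes things slightly differently---it first uses the same-neighbor improvement to force the cyclic fluid word to be $3$-periodic and hence $N\geq 6$, then invokes a quick pigeonhole to find a sector below its $\Gamma_{ij}$---whereas your unified threshold $\tau(S)$ and case-split counting lemma handle all $N\geq 4$ in one pass; your counting is in fact a bit more careful than the paper's terse ``$\leq\Gamma_{12}/2$'' claim, and your explicit identification of the original configuration's energy on $T$ with $C(P_0)$ makes the comparison with $C(\tilde P)$ cleaner than the paper's appeal to the figure.
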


\begin{proof}
The first observation we need is that if we don't have all three fluids in any three consecutive sectors, then the
triangle inequality guarantees an improvement by ``filling in'' near the triple point.  See Figure~\ref{fig:All3Diff}
where we have only $A_0$ and $A_1$ in three consecutive sectors on the left hand side, and where we have
an immediate improvement on the right hand side.
\begin{figure}[!h]
	\centering
	\scalebox{1}{\includegraphics[trim=0cm 0cm 0cm 0cm, clip]{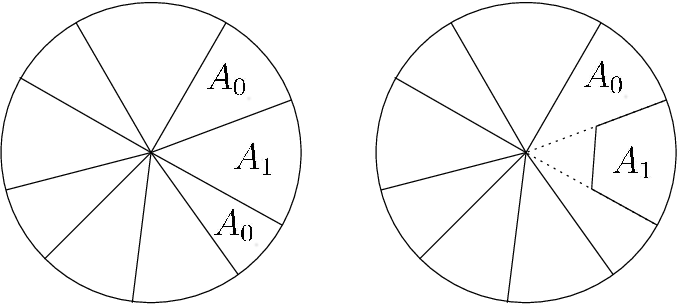}}
	\caption{An improvement when three consecutive sectors have only two fluids.}
	\label{fig:All3Diff}
\end{figure}
Thus, it follows that if we have more than three sectors, then we must have at least six sectors.

Now by renaming and/or relabeling we can assume without loss of generality that we have the situation
depicted on the left hand side of Figure~\ref{fig:Improve}.   Furthermore, using the fact that we have at least
six sectors now, we can assume that the angle of the sector for $A_0$ on the left hand side of the figure
is less than or equal to $\Gamma_{12}/2$ which is strictly less than $\Gamma_{12}.$  Now of course
we can apply Proposition\refthm{ExistGoodTs}to get the existence of a good triangle, followed by
Proposition\refthm{MinGoodTs}to come to a contradiction.
\begin{figure}[!h]
	\centering
	\scalebox{1}{\includegraphics[trim=0cm 0cm 0cm 0cm, clip]{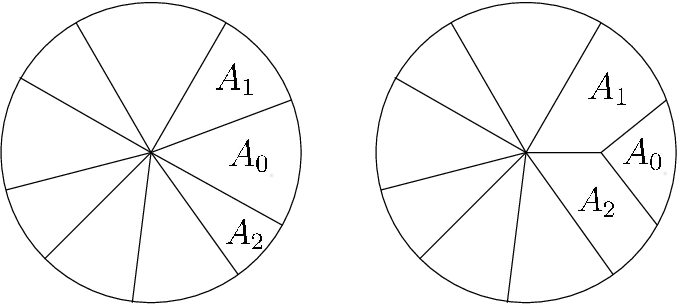}}
	\caption{An improvement when three consecutive sectors have only two fluids.}
	\label{fig:Improve}
\end{figure}
\end{proof}

Now we can turn to the proof of Theorem~\ref{thm:connectedcone}:

\noindent
\begin{proof}
In fact, at this point, the D-minimal situation is essentially complete.  The final observation needed is that if
the angles are not exactly what they are supposed to be, then one of the angles is smaller than the corresponding
$\Gamma_{ij}$ and then (after renaming the indices if necessary) we can invoke
Proposition\refthm{ExistGoodTs}followed by Proposition\refthm{MinGoodTs}to achieve the desired result. Thus,
we turn immediately to the V-minimal case.

The key observation in the V-minimal case is that we can actually improve Almgren's Volume Adjustment Lemma
by removing any lack of uniformity when our configuration consists solely of cones.  Indeed, we suppose toward
a contradiction that we have a V-minimal configuration of cones which does not satisfy the angle condition.  In this
case, it follows from the D-minimal proof that we can lower the energy by some amount within $B_1$ if we temporarily
ignore the volume constraint.  On the other hand, by considering our sectors on a large enough disk, we can restore the
volume constraint by adding or subtracting rectangles along the boundaries of the sectors at a cost which is bounded
by twice the width of the rectangle times the largest $\sigma_{ij}.$  See Figure~\ref{fig:RectPic}.  
Of course, since we can choose our disk to be as
large as we like, our rectangles can have arbitrarily small width, and therefore we can fix the volume constraint with
a loss to our energy which is as small as we like.  The arbitrarily small width that we can have for these
rectangles also guarantees that even if one of our sectors is very thin, by shrinking the width of the rectangle if 
necessary, we do not have to worry about having an intersection with more than one of the rays bounding our sector.
Thus, the original configuration could not possibly have been the
V-minimizer on our large disk and that gives us the desired contradiction.
\begin{figure}[!h]
	\centering
	\scalebox{.75}{\includegraphics[trim=8cm 5cm 0cm .75cm, clip]{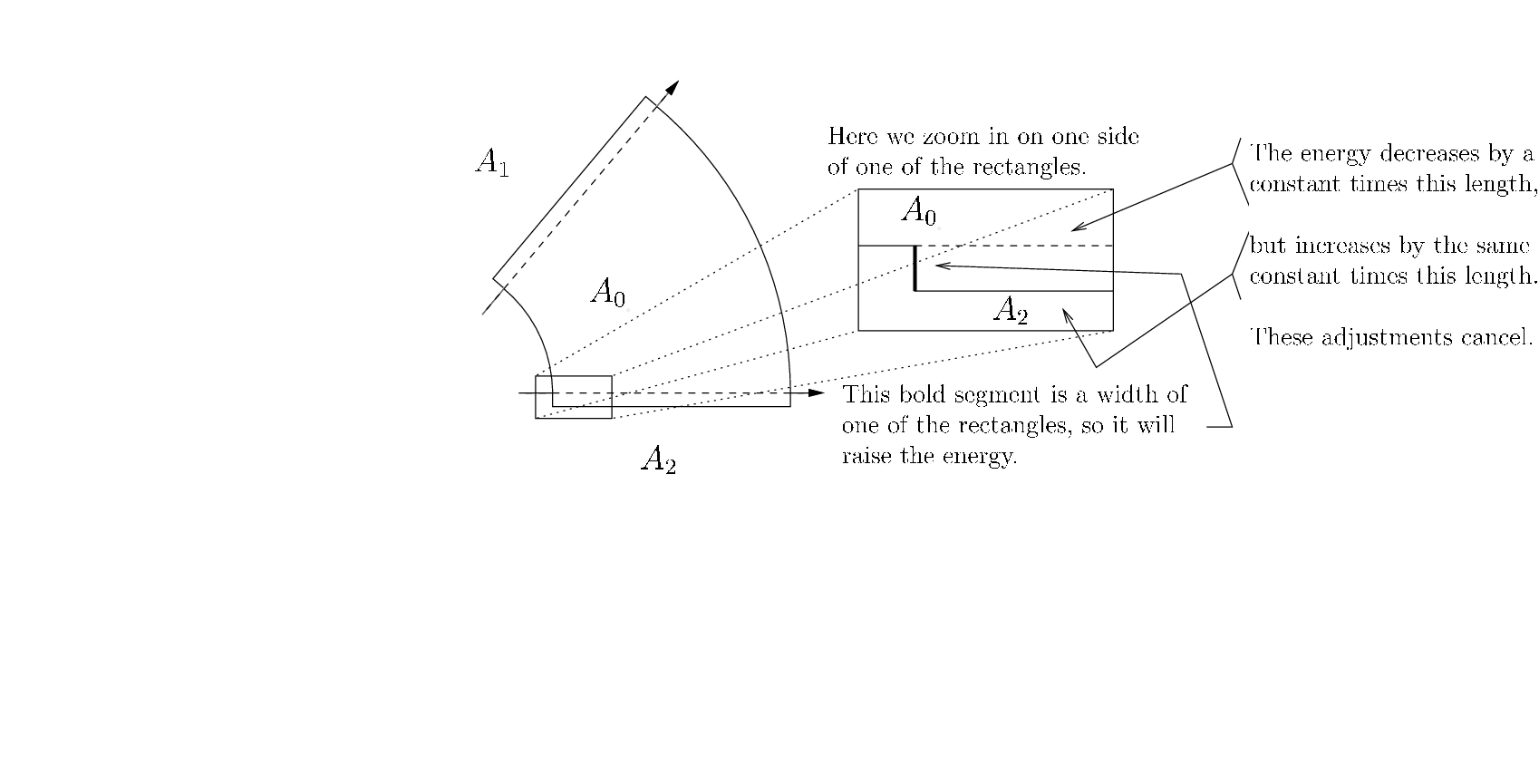}}
	\caption{An example of using rectangles to adjust the volumes.}
	\label{fig:RectPic}
\end{figure}
\end{proof}

\begin{remark}[Rectangles are not optimal, but very convenient]  \label{Ranobvc}
The competing variations built by rectangles could be immediately improved by using
smoother connections to the old boundary, however, we prefer the explicit construction
presented in the proof.
\end{remark}

We note that Futer, Gnepp, McMath, Munson, Ng, Pahk, and Yoder \cite{FGMMNPY} studied
planar cones that are minimizing, which is similar to some of the results above, however they
proved their results using a calibration argument as in \cite{LM1994}.  Lawlor and Morgan
give a criteria for a configuration of immiscible fluids to be energy minimizing
(see \cite[equation 1, section 1.2]{LM1994}) in the case where the interfaces are pieces of
planes, and presumably, this equation is equivalent to the Neumann angle condition in dimension
2 or 3, although they make no direct claims of this fact.
Using Morgan's result that there are only finitely many triple points in the tangent
plane \cite{Mo1998}, we may use our results to conclude that there are only finitely
many triple points on the blow up sphere $\partial B_1$.  Classifying this finite number
of free boundary points remains an open problem.


\section{Concluding comments}
\label{Acknow}

It is with great sadness that we must report that our collaborator Alan Elcrat passed away
suddenly on December 20th, 2013.  He was an energetic and hard-working mathematician,
and a good friend and mentor.  It is without doubt that the current work would not have been
completed without him, and that future works will be more difficult without his insight.

Finally, to close with some cheer, we wish to thank Luis Silvestre and especially Frank Morgan for
useful conversations.  Silvestre helped us with certain aspects of the coarea formula, and Morgan
assisted us greatly in understanding Allard's work.   We also wish to thank the referees for their
expertise with geometric measure theory and for their very constructive criticisms of earlier drafts
of this work.  Finally, the third author was a postdoc at Kansas State University when this project
began, and he was also partially supported by an REP grant from Texas State University in 2012
for work on this project.


\noindent
Kansas State University, Department of Mathematics, Manhattan, Kansas, blanki@math.ksu.edu \\ \ \\
Texas State University, Department of Mathematics, San Marcos, Texas, rt30@txstate.edu

\end{document}